\newcommand{\bdry}[1]{\partial #1}
\newcommand{\dint}{\ds{\int}}
\newcommand{\ds}[1]{\displaystyle #1}
\newcommand{\eps}{\varepsilon}
\newcommand{\half}{\frac{1}{2}}
\newcommand{\norm}[2][]{\left\|#2\right\|_{#1}}
\renewcommand{\O}{\text{O}}
\renewcommand{\o}{\text{o}}
\newcommand{\PS}[1]{$(\text{PS})_{#1}$}
\newcommand{\pnorm}[2][]{\if #1'' \left|#2\right|_p \else \left|#2\right|_{#1} \fi}
\newcommand{\QED}{\mbox{\qedhere}}
\newcommand{\R}{\mathbb R}
\newcommand{\seq}[1]{\left(#1\right)}
\newcommand{\set}[1]{\left\{#1\right\}}
\newcommand{\wto}{\rightharpoonup}
\newenvironment{enumroman}{\begin{enumerate}

}{\end{enumerate}}
\newenvironment{properties}[1]{\begin{enumerate}

}{\end{enumerate}}
\newtheorem{corollary}{Corollary}[section]
\newtheorem{lemma}[corollary]{Lemma}
\newtheorem{proposition}[corollary]{Proposition}
\newtheorem{theorem}[corollary]{Theorem}
\theoremstyle{definition}
\newtheorem{definition}[corollary]{Definition}
\theoremstyle{remark}
\newtheorem{remark}[corollary]{Remark}
\numberwithin{equation}{section}
\title{\bf On the compactness threshold in the critical Kirchhoff equation\thanks{{\em MSC2010:} Primary 35J60, Secondary 35B33, 35J20
\newline \indent\; {\em Key Words and Phrases:} critical Kirchhoff equation, general nonlocal term, compactness threshold, existence, multiplicity}}
\author{\bf Erisa Hasani and Kanishka Perera\\
Department of Mathematical Sciences\\
Florida Institute of Technology\\
Melbourne, FL 32901, USA\\
\em ehasani2016@my.fit.edu \& kperera@fit.edu}
\date{}
\begin{document}

\maketitle

\begin{abstract}
We study a class of critical Kirchhoff problems with a general nonlocal term. The main difficulty here is the absence of a closed-form formula for the compactness threshold. First we obtain a variational characterization of this threshold level. Then we prove a series of existence and multiplicity results based on this variational characterization.
\end{abstract}

\section{Introduction}

The purpose of this paper is to study the critical Kirchhoff problem
\begin{equation} \label{1.1}
\left\{\begin{aligned}
- h\bigg(\int_\Omega |\nabla u|^2\, dx\bigg)\, \Delta u & = f(x,u) + |u|^{2^\ast-2}\, u && \text{in } \Omega\\[5pt]
u & = 0 && \text{on } \bdry{\Omega},
\end{aligned}\right.
\end{equation}
where $\Omega$ is a bounded domain in $\R^N,\, N \ge 3$, $h : [0,\infty) \to [0,\infty)$ is a continuous and nondecreasing function, $2^\ast = 2N/(N - 2)$ is the critical Sobolev exponent, and $f$ is a Carath\'{e}odory function on $\Omega \times \R$ satisfying the subcritical growth condition
\begin{equation} \label{1.2}
|f(x,t)| \le c_1 |t|^{p-1} + c_2 \quad \text{for a.a.\! } x \in \Omega \text{ and all } t \in \R
\end{equation}
for some constants $c_1, c_2 > 0$ and $1 < p < 2^\ast$. The class of nonlocal terms considered here includes sums of powers
\[
h(t) = \sum_{i=1}^n a_i\, t^{\gamma_i-1}, \quad t \ge 0,
\]
where $a_1, \dots, a_n > 0$ and $1 \le \gamma_1 < \cdots < \gamma_n < + \infty$. A model case is
\[
h(t) = a + bt^{\gamma-1},
\]
where $a, b \ge 0$ with $a + b > 0$ and $1 < \gamma < + \infty$. The classical case $h(t) = a + bt$ corresponds to $\gamma = 2$.

As is usually the case with problems of critical growth, problem \eqref{1.1} lacks compactness. The standard approach to such problems is to determine a threshold level below which there is compactness and construct minimax critical levels below this threshold. This approach has been used in the classical case
\[
h(t) = a + bt, \quad a, b \ge 0, \quad a + b > 0
\]
in dimensions $N = 3 \text{ and } 4$ to obtain nontrivial solutions in the recent literature (see, e.g., Huang et al.\! \cite{MR3771258}, Liao et al.\! \cite{MR3861499}, Naimen \cite{MR3210026,MR3278854}, Xie et al.\! \cite{MR3060908}, Yao and Mu \cite{MR3529380}, Zhang and Liu \cite{MR3626222}, and the references therein). However, in the general case considered in the present paper such a threshold level cannot be found in closed form. Our first contribution here is a variational characterization of this threshold level (see Theorem \ref{Theorem 2.3}). Then we give a series of existence and multiplicity results based on this variational characterization (see Section \ref{Section 3}). This requires novel arguments due to the absence of a closed-form compactness threshold.

We will state and prove our compactness, existence, and multiplicity results for problem \eqref{1.1} with a general nonlocal term $h$ in the next two sections. To illustrate our results while keeping the presentation simple, we state them here for the model problem
\begin{equation} \label{1.3}
\left\{\begin{aligned}
- \left[a + b \left(\int_\Omega |\nabla u|^2\, dx\right)^{\gamma-1}\right] \Delta u & = \lambda u + |u|^{2^\ast-2}\, u && \text{in } \Omega\\[5pt]
u & = 0 && \text{on } \bdry{\Omega},
\end{aligned}\right.
\end{equation}
where $1 < \gamma < + \infty$, $a \ge 0$, $b > 0$, and $\lambda > 0$.

Weak solutions of this problem coincide with critical points of the functional
\[
J(u) = \frac{a}{2} \int_\Omega |\nabla u|^2\, dx + \frac{b}{2 \gamma} \left(\int_\Omega |\nabla u|^2\, dx\right)^\gamma - \frac{\lambda}{2} \int_\Omega u^2\, dx - \frac{1}{2^\ast} \int_\Omega |u|^{2^\ast} dx, \quad u \in H^1_0(\Omega).
\]
Recall that $J$ satisfies the Palais-Smale compactness condition at the level $c \in \R$, or the \PS{c} condition for short, if every sequence $\seq{u_j}$ in $H^1_0(\Omega)$ such that $J(u_j) \to c$ and $J'(u_j) \to 0$ has a strongly convergent subsequence. Let $S$ be the best Sobolev constant (see \eqref{1.7}) and let $\lambda_1 > 0$ be the first Dirichlet eigenvalue of the Laplacian in $\Omega$. We have the following compactness results for the cases $\gamma < 2^\ast/2$, $\gamma = 2^\ast/2$, and $\gamma > 2^\ast/2$ (see Corollary \ref{Corollary 1.6}, Corollary \ref{Corollary 1.7}, and Corollary \ref{Corollary 2.11}).

\begin{theorem} \label{Theorem 1.1}
Let $1 < \gamma < 2^\ast/2$, $a, b > 0$, and $0 < \lambda \le a \lambda_1$. Let $t_0$ be the unique positive solution of the equation $a + bt^{\gamma-1} = S^{-2^\ast\!/2}\, t^{2^\ast\!/2-1}$ and set
\[
c^\ast = \frac{1}{N}\, at_0 + \left(\frac{1}{2 \gamma} - \frac{1}{2^\ast}\right) bt_0^\gamma.
\]
Then $J$ satisfies the {\em \PS{c}} condition for all $c < c^\ast$.
\end{theorem}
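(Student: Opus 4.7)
The plan is a concentration argument based on the Br\'ezis--Lieb lemma, culminating in an energy lower bound that will exclude energy levels $c < c^\ast$ unless full compactness holds. Let $(u_j) \subset H^1_0(\Omega)$ be a $(\text{PS})_c$ sequence. I would first establish boundedness by computing $J(u_j) - \frac{1}{2^\ast}\langle J'(u_j), u_j\rangle$, which cancels the critical term and yields
\[
\Bigl(\tfrac{1}{2}-\tfrac{1}{2^\ast}\Bigr)\Bigl(a\!\int_\Omega|\nabla u_j|^2\, dx - \lambda\!\int_\Omega u_j^2\, dx\Bigr) + \Bigl(\tfrac{1}{2\gamma}-\tfrac{1}{2^\ast}\Bigr) b\Bigl(\int_\Omega|\nabla u_j|^2\, dx\Bigr)^\gamma = c + o(1) + o(1)\|u_j\|_{H^1_0}.
\]
The first bracket is nonnegative by Poincar\'e since $\lambda \le a\lambda_1$, and the second coefficient is positive because $\gamma < 2^\ast/2$; as $2\gamma > 2$, boundedness of $(u_j)$ in $H^1_0(\Omega)$ follows.

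Next, pass to a subsequence with $u_j \wto u$ in $H^1_0(\Omega)$, $u_j \to u$ in $L^2(\Omega)$ and a.e., and $\int_\Omega|\nabla u_j|^2\, dx \to t$ for some $t \ge 0$. Passing to the limit in $J'(u_j) = o(1)$ shows that $u$ weakly solves the limiting equation $-(a + bt^{\gamma-1})\Delta u = \lambda u + |u|^{2^\ast-2} u$. Setting $v_j := u_j - u$, the Br\'ezis--Lieb lemma yields
\[
s := \lim \int_\Omega|\nabla v_j|^2\, dx = t - \int_\Omega|\nabla u|^2\, dx, \qquad \ell := \lim \int_\Omega |v_j|^{2^\ast} dx = \lim \int_\Omega |u_j|^{2^\ast} dx - \int_\Omega |u|^{2^\ast} dx.
\]
Subtracting the equation for $u$ tested against $u$ from the limit of $\langle J'(u_j), u_j\rangle = o(1)$ gives the key balance identity $(a + bt^{\gamma-1})\, s = \ell$.

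If $s = 0$ then $v_j \to 0$ in $H^1_0(\Omega)$ and compactness holds, so assume $s > 0$. The Sobolev inequality on $v_j$ yields $\ell \le S^{-2^\ast/2}\, s^{2^\ast/2}$, so combining with the balance identity, $a + bt^{\gamma-1} \le S^{-2^\ast/2}\, s^{2^\ast/2-1}$. Since $t \ge s$ and $\gamma > 1$, replacing $t$ by $s$ on the left only strengthens the inequality, so $\phi(s) \ge 0$ where $\phi(\tau) := S^{-2^\ast/2}\tau^{2^\ast/2-1} - a - b\tau^{\gamma-1}$. Because $\phi(0^+) = -a < 0$, $\phi(\tau) \to +\infty$ at infinity (using $\gamma < 2^\ast/2$), and $t_0$ is its unique positive zero, we conclude $s \ge t_0$, and hence also $t \ge t_0$.

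The last step is the energy lower bound. Using the equation for $u$ to eliminate $\lambda\!\int_\Omega u^2\, dx$ in $c = \lim J(u_j)$ and substituting the balance identity for $\ell$, followed by the identity $1/N = 1/2 - 1/2^\ast$, collapses to
\[
c = \frac{1}{N}\int_\Omega |u|^{2^\ast}\! dx + \frac{\ell}{N} - \frac{\gamma-1}{2\gamma}\, b\, t^\gamma.
\]
The hypothesis $\lambda \le a\lambda_1$ combined with Poincar\'e and the equation for $u$ yields the crucial bound $\int_\Omega |u|^{2^\ast} dx \ge b t^{\gamma-1} \int_\Omega|\nabla u|^2\, dx$. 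Substituting this together with $\ell = (a+bt^{\gamma-1})(t - \int_\Omega|\nabla u|^2\, dx)$ reduces the above to
\[
c \ge \frac{a s}{N} + b\Bigl(\frac{1}{2\gamma} - \frac{1}{2^\ast}\Bigr) t^\gamma \ge \frac{a t_0}{N} + b\Bigl(\frac{1}{2\gamma} - \frac{1}{2^\ast}\Bigr) t_0^\gamma = c^\ast,
\]
since both coefficients are positive and $s, t \ge t_0$. This contradicts $c < c^\ast$, so $s = 0$. The main obstacle is this last algebraic collapse: both the balance identity and the condition $\lambda \le a\lambda_1$ are essential to cancel the negative $-\frac{\gamma-1}{2\gamma}b t^\gamma$ contribution and identify $c^\ast$ as the correct threshold; without either, the lower bound degrades and the argument fails.
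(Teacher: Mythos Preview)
Your proof is correct and follows essentially the same concentration-compactness strategy as the paper: boundedness via $J(u_j)-\tfrac{1}{2^\ast}\langle J'(u_j),u_j\rangle$, Br\'ezis--Lieb splitting, the balance identity $(a+bt^{\gamma-1})s=\ell$, the Sobolev lower bound forcing $s\ge t_0$, and a final energy inequality yielding $c\ge c^\ast$. The only organisational difference is in that last step: the paper packages the argument through the abstract function $K(t)=\tfrac12 H(t)-\tfrac{1}{2^\ast}th(t)$, using its superadditivity (assumption \ref{A2}) together with the pointwise inequality $K(\|u\|^2)\ge \int[F-\tfrac{1}{2^\ast}uf]\,dx$ from \ref{A1}, to obtain $c\ge K(s)\ge K(t_0)=c^\ast$; you instead exploit the limiting equation for $u$ (tested against $u$) plus Poincar\'e to get $\int|u|^{2^\ast}\ge bt^{\gamma-1}\int|\nabla u|^2$, and then close by explicit algebra. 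Your computation actually yields the slightly sharper bound $c\ge \tfrac{as}{N}+(\tfrac{1}{2\gamma}-\tfrac{1}{2^\ast})bt^\gamma$ with the full limit $t$ in the second term, but both reduce to $c\ge c^\ast$ once $s,t\ge t_0$. The paper's formulation has the advantage of applying to general $h$ satisfying \ref{A1}--\ref{A3}; yours is more hands-on and avoids introducing $K$ and its superadditivity, at the cost of being tailored to the specific model $h(t)=a+bt^{\gamma-1}$.
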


\begin{theorem}
Let $\gamma = 2^\ast/2$.
\begin{enumroman}
\item Let $a > 0$, $0 < b < S^{-2^\ast\!/2}$, and $0 < \lambda < a \lambda_1$. Set
    \[
    c^\ast = \frac{1}{N} \left(\frac{a^{2^\ast\!/2}}{S^{-2^\ast\!/2} - b}\right)^{2/(2^\ast-2)}.
    \]
    Then $J$ satisfies the {\em \PS{c}} condition for all $c < c^\ast$.
\item If $a \ge 0$ and $b > S^{-2^\ast\!/2}$, then $J$ satisfies the {\em \PS{c}} condition for all $c \in \R$ for any $\lambda > 0$.
\end{enumroman}
\end{theorem}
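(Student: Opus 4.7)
The plan is a direct Palais-Smale analysis tracking the nonlocal coefficient $h(T_j) = a + bT_j^{\gamma-1}$, where $T_j := \norm[2]{\nabla u_j}^2$ and $\gamma = 2^\ast/2$ throughout. First I would show that any \PS{c} sequence $\seq{u_j}$ is bounded in $H_0^1(\Omega)$. For (i), the identity
$$J(u_j) - \frac{1}{2^\ast}\langle J'(u_j), u_j\rangle = \frac{1}{N}\bigl(aT_j - \lambda \norm[2]{u_j}^2\bigr),$$
combined with Poincar\'e and $\lambda < a\lambda_1$, does the job. For (ii), where $a$ may vanish, I use $\norm[2^\ast]{u_j}^{2^\ast} \le S^{-2^\ast\!/2}T_j^{2^\ast\!/2}$ and $\norm[2]{u_j}^2 \le |\Omega|^{2/N} S^{-1} T_j$ to get
$$J(u_j) \ge \frac{a}{2}T_j + \frac{b-S^{-2^\ast\!/2}}{2^\ast}T_j^{2^\ast\!/2} - \frac{\lambda |\Omega|^{2/N}}{2S}T_j,$$
which is coercive in $T_j$ since $b > S^{-2^\ast\!/2}$, forcing $T_j$ bounded.

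Next, passing to a subsequence with $u_j \wto u$ in $H_0^1(\Omega)$, $u_j \to u$ in $L^q$ for $q < 2^\ast$, and $T_j \to t_\infty$, testing $J'(u_j) \to 0$ against $C_c^\infty(\Omega)$ functions shows that $u$ solves $-\beta \Delta u = \lambda u + |u|^{2^\ast-2}u$ with $\beta := a + bt_\infty^{\gamma-1}$. Setting $v_j := u_j - u$, Brezis--Lieb on $\norm[2^\ast]{u_j}^{2^\ast}$ together with $\langle J'(u_j), v_j\rangle \to 0$ yields
$$W_\infty := \lim \norm[2^\ast]{v_j}^{2^\ast} = \beta V_\infty, \qquad V_\infty := \lim \norm[2]{\nabla v_j}^2,$$
while Sobolev gives $W_\infty \le S^{-2^\ast\!/2} V_\infty^{2^\ast\!/2}$. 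Combining these, whenever $V_\infty > 0$ one has
$$\beta \le S^{-2^\ast\!/2} V_\infty^{\gamma-1}.$$

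For (ii), $V_\infty \le t_\infty$ then forces $a \le (S^{-2^\ast\!/2} - b)\, t_\infty^{\gamma-1}$; with $b > S^{-2^\ast\!/2}$ and $a \ge 0$ this is impossible unless $t_\infty = 0$, so $V_\infty = 0$, proving strong convergence at every level $c \in \R$.

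For (i), I would next extract the energy level. Plugging Brezis--Lieb into $J(u_j) \to c$, using $W_\infty = \beta V_\infty$ and the solution identity $\beta T = \lambda \norm[2]{u}^2 + \norm[2^\ast]{u}^{2^\ast}$ (with $T := \norm[2]{\nabla u}^2$), the $b$-contributions telescope via the elementary identity
$$t_\infty^{\gamma} - T^{\gamma} - t_\infty^{\gamma-1}(t_\infty - T) = -T\bigl(t_\infty^{\gamma-1} - T^{\gamma-1}\bigr)$$
and cancel the corresponding contribution coming from $J(u)$, yielding the clean formula
$$c = \frac{a}{N}\, t_\infty - \frac{\lambda}{N}\norm[2]{u}^2.$$
Since $\lambda < a\lambda_1$, Poincar\'e gives $\frac{a}{N}T - \frac{\lambda}{N}\norm[2]{u}^2 \ge 0$, so $c \ge \frac{a}{N}V_\infty$. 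Feeding $t_\infty \ge V_\infty$ into the Sobolev-type constraint (together with $\gamma-1 = 2/(N-2)$) yields $a \le (S^{-2^\ast\!/2}-b) V_\infty^{\gamma-1}$ and hence
$$V_\infty \ge \bigg(\frac{a}{S^{-2^\ast\!/2}-b}\bigg)^{(N-2)/2},$$
so $c \ge \dfrac{a^{N/2}}{N(S^{-2^\ast\!/2}-b)^{(N-2)/2}} = c^\ast$, contradicting $c < c^\ast$. The main obstacle is producing the clean level formula $c = \frac{a}{N}t_\infty - \frac{\lambda}{N}\norm[2]{u}^2$: the algebraic cancellation behind it is the special feature of $\gamma = 2^\ast/2$ that makes a closed-form threshold possible, and the quantitative conclusions of both parts rest on exploiting it.
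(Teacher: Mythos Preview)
Your argument is correct and, at its core, coincides with the paper's. The paper obtains this theorem as a corollary of two general results: part~(i) from Theorem~2.3 (via Corollary~2.9) and part~(ii) from Theorem~2.4 (via Corollary~2.11). Stripping away the abstraction, those proofs run exactly as yours does. In the paper's language, $K(t)=\tfrac12 H(t)-\tfrac{1}{2^\ast}th(t)$ specializes here to $K(t)=\tfrac{a}{N}t$, so your identity
\[
J(u_j)-\tfrac{1}{2^\ast}\langle J'(u_j),u_j\rangle=\tfrac{1}{N}\bigl(aT_j-\lambda\norm[2]{u_j}^2\bigr)
\]
is precisely $K(T_j)-\int\!\bigl[F-\tfrac{1}{2^\ast}uf\bigr]$; your constraint $\beta\le S^{-2^\ast/2}V_\infty^{\gamma-1}$ together with $\beta\ge h(V_\infty)$ is the paper's Proposition~2.2 (``$t\in I$''); and your clean level formula $c=\tfrac{a}{N}t_\infty-\tfrac{\lambda}{N}\norm[2]{u}^2$ is the paper's $c=K(s)-\int\!\bigl[F-\tfrac{1}{2^\ast}uf\bigr]$. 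The superadditivity step $K(t_\infty)\ge K(V_\infty)+K(T)$ used in the paper reduces here to an equality because $K$ is linear, which is exactly the ``algebraic cancellation'' you highlight. What the paper's route buys is a single argument covering general $h$; what your direct route buys is that no abstract hypotheses $(A_1)$--$(A_3)$ need to be checked.

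Two minor remarks. First, the displayed identity you invoke has a sign slip: one gets $t_\infty^\gamma-T^\gamma-t_\infty^{\gamma-1}(t_\infty-T)=+T(t_\infty^{\gamma-1}-T^{\gamma-1})$, not the negative; this does not affect your conclusion since the $b$-terms in fact cancel outright once $\gamma=2^\ast/2$ (so $\tfrac{1}{2\gamma}=\tfrac{1}{2^\ast}$), and the formula $c=\tfrac{a}{N}t_\infty-\tfrac{\lambda}{N}\norm[2]{u}^2$ is correct regardless. Second, in part~(ii) your phrase ``impossible unless $t_\infty=0$'' is slightly off: you are already assuming $V_\infty>0$, hence $t_\infty>0$, so the inequality is simply impossible and the conclusion $V_\infty=0$ follows by contradiction.
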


\begin{theorem} \label{Theorem 1.3}
If $\gamma > 2^\ast/2$ and
\[
a^{\gamma-2^\ast\!/2}\, b^{2^\ast\!/2-1} > \frac{(\gamma - 2^\ast/2)^{\gamma-2^\ast\!/2}\, (2^\ast/2 - 1)^{2^\ast\!/2-1}}{(\gamma - 1)^{\gamma-1}}\, S^{-(2^\ast\!/2)(\gamma-1)},
\]
then $J$ satisfies the {\em \PS{c}} condition for all $c \in \R$ for any $\lambda > 0$.
\end{theorem}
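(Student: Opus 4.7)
The plan is to run the usual concentration–compactness analysis for the model functional $J$ at an arbitrary level $c\in\R$, exploiting the strength of the nonlocal term to rule out any bubble formation. Let $\seq{u_j}\subset H^1_0(\Omega)$ be a \PS{c} sequence. First I would establish boundedness: the relation $\langle J'(u_j),u_j\rangle=\o(\norm{u_j})$ combined with the Sobolev inequality gives
\[
a\,\norm{u_j}^2+b\,\norm{u_j}^{2\gamma}\le \frac{\lambda}{\lambda_1}\,\norm{u_j}^2+S^{-2^\ast\!/2}\,\norm{u_j}^{2^\ast}+\o(\norm{u_j}).
\]
Since $2\gamma>2^\ast$ by hypothesis, the left-hand side eventually dominates, so $\seq{u_j}$ is bounded. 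Passing to a subsequence, $u_j\wto u$ in $H^1_0(\Omega)$, $u_j\to u$ in $L^q$ for $q<2^\ast$ and a.e., and $\norm{u_j}^2\to \norm{u}^2+t$ for some $t\ge 0$.

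Next I would identify the weak limit. Because $h$ is continuous, the Kirchhoff coefficient converges: $h(\norm{u_j}^2)\to h(\norm{u}^2+t)$. Passing to the limit in $J'(u_j)\to 0$, one finds that $u$ is a weak solution of the shifted problem
\[
-\,h(\norm{u}^2+t)\,\Delta u=\lambda u+|u|^{2^\ast-2}u\quad\text{in }\Omega.
\]
Testing with $u$ yields $h(\norm{u}^2+t)\,\norm{u}^2=\lambda\int_\Omega u^2+\int_\Omega|u|^{2^\ast}$. Setting $v_j=u_j-u$, the Brezis–Lieb lemma gives $\int_\Omega|u_j|^{2^\ast}=\int_\Omega|u|^{2^\ast}+\int_\Omega|v_j|^{2^\ast}+\o(1)$, while $\langle J'(u_j),u_j\rangle\to 0$ yields, after subtracting the identity for $u$,
\[
h(\norm{u}^2+t)\,t=\lim_{j\to\infty}\int_\Omega|v_j|^{2^\ast}.
\]

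The key step is then to combine this with the Sobolev inequality $\int_\Omega|v_j|^{2^\ast}\le S^{-2^\ast\!/2}\norm{v_j}^{2^\ast}$: if $t>0$, passing to the limit and dividing by $t$ produces
\[
h(\norm{u}^2+t)\le S^{-2^\ast\!/2}\,t^{\,2^\ast\!/2-1}.
\]
Since $h$ is nondecreasing, $h(t)\le h(\norm{u}^2+t)$, and so $a+bt^{\gamma-1}\le S^{-2^\ast\!/2}\,t^{\,2^\ast\!/2-1}$. The main obstacle — really the content of the theorem — is to show that the numerical hypothesis forbids this last inequality for every $t>0$. To do so I would compute the maximum over $t>0$ of the function
\[
\psi(t)=S^{-2^\ast\!/2}\,t^{\,2^\ast\!/2-1}-b\,t^{\gamma-1},
\]
which (since $\gamma>2^\ast\!/2$) is attained at the unique critical point $T^{\ast}=\bigl[S^{-2^\ast\!/2}(2^\ast\!/2-1)/(b(\gamma-1))\bigr]^{1/(\gamma-2^\ast\!/2)}$, and then check by an elementary algebraic manipulation that the stated inequality on $a^{\gamma-2^\ast\!/2}b^{2^\ast\!/2-1}$ is precisely $a>\psi(T^{\ast})$. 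This contradiction forces $t=0$, hence $v_j\to 0$ in $H^1_0(\Omega)$ and $u_j\to u$ strongly, completing the verification of \PS{c}. The only delicate ingredient is the uniform lower bound on $h(t)-S^{-2^\ast\!/2}t^{\,2^\ast\!/2-1}$; once the sharp constant $\psi(T^{\ast})$ is identified, the rest is bookkeeping.
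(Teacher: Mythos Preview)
Your proof is correct and follows essentially the same route as the paper: the paper factors the argument through Proposition~\ref{Proposition 2.2} (the Br\'ezis--Lieb splitting showing that either $t=0$ or $t\in I$), Theorem~\ref{Theorem 2.4} (boundedness plus $I=\emptyset$ gives strong convergence), and Corollary~\ref{Corollary 2.11} (the elementary calculus verifying that the hypothesis on $a,b$ is exactly the condition $\min_{t>0}\big(a+bt^{\gamma-1}-S^{-2^\ast\!/2}t^{2^\ast\!/2-1}\big)>0$), whereas you carry out all three steps directly for the model nonlocal term. The only cosmetic difference is that the paper obtains boundedness from the coercivity of $J$ itself (using $H(t)\ge \frac{b}{\gamma}t^\gamma$ with $\gamma>2^\ast/2$) rather than from testing $J'(u_j)$ against $u_j$.
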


Theorems \ref{Theorem 1.1}--\ref{Theorem 1.3} have the following corollary for the classical case $\gamma = 2$, where $c^\ast = + \infty$ means that $J$ satisfies the \PS{c} condition for all $c \in \R$.

\begin{corollary} \label{Corollary 1.4}
Let $\gamma = 2$.
\begin{enumroman}
\item \label{Corollary 1.4.i} If $N = 3$, $a, b > 0$, and $0 < \lambda \le a \lambda_1$, then $c^\ast = \dfrac{1}{4}\, abS^3 + \dfrac{1}{24}\, b^3 S^6 + \dfrac{1}{24} \left(4aS + b^2 S^4\right)^{3/2}$.
\item If $N = 4$, $a > 0$, $0 < b < S^{-2}$, and $0 < \lambda < a \lambda_1$, then $c^\ast = \dfrac{a^2}{4\, (S^{-2} - b)}$.
\item \label{Corollary 1.4.iii} If $N = 4$, $a \ge 0$, and $b > S^{-2}$, then $c^\ast = + \infty$ for any $\lambda > 0$.
\item If $N \ge 5$ and $a^{N-4}\, b^2 > \dfrac{4\, (N - 4)^{N-4}}{(N - 2)^{N-2}}\, S^{-N}$, then $c^\ast = + \infty$ for any $\lambda > 0$.
\end{enumroman}
\end{corollary}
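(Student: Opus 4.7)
The plan is to specialize each of the three compactness theorems above to $\gamma = 2$ and verify, case by case, that the resulting formula matches the claim. The argument is essentially algebraic substitution, but the four parts need to be sorted by which regime of $\gamma$ versus $2^\ast/2$ they fall into, which in turn is dictated by the dimension $N$.

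For part (i), the dimension $N = 3$ gives $2^\ast/2 = 3 > 2 = \gamma$, so Theorem \ref{Theorem 1.1} applies. First I would solve the quadratic $S^{-3}\, t^2 - bt - a = 0$ to obtain
\[
t_0 = \frac{bS^3 + \sqrt{b^2 S^6 + 4aS^3}}{2},
\]
and then substitute into $c^\ast = (1/3)\, at_0 + (1/12)\, bt_0^2$. To handle $bt_0^2$, I would use the defining relation $t_0^2 = S^3(a + bt_0)$ to replace $bt_0^2$ by $abS^3 + b^2 S^3 t_0$, collect the coefficients of $t_0$, and then plug in the explicit formula for $t_0$. The final collapse into the stated expression rests on the identity $(4a + b^2 S^3)\sqrt{b^2 S^6 + 4aS^3} = (4aS + b^2 S^4)^{3/2}$, which holds because the radicand factors as $S^3(4a + b^2 S^3)$.

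For parts (ii) and (iii), the dimension $N = 4$ gives $2^\ast/2 = 2 = \gamma$, placing us in the borderline regime of the second theorem in the introduction. Substituting $\gamma = 2$ and $2^\ast = 4$ into its part (i) reduces the exponent $2/(2^\ast - 2)$ to $1$ and yields $c^\ast = a^2/[4(S^{-2} - b)]$, which is (ii); its part (ii) gives (iii) at once. For part (iv), $N \ge 5$ gives $2^\ast/2 = N/(N-2) < 2 = \gamma$, so Theorem \ref{Theorem 1.3} applies. A direct computation gives $\gamma - 2^\ast/2 = (N-4)/(N-2)$, $2^\ast/2 - 1 = 2/(N-2)$, $\gamma - 1 = 1$, and $(2^\ast/2)(\gamma - 1) = N/(N-2)$; substituting these into the hypothesis of Theorem \ref{Theorem 1.3} and raising both sides to the power $N - 2$ yields the stated inequality exactly.

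In every case the proof reduces to routine substitution; there is no new analytic ingredient. The only place where one must be slightly careful is the algebraic simplification at the end of (i), where the closed-form expression for $t_0$ has to be combined with the identity above to produce the three-term sum in the statement.
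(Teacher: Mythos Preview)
Your proposal is correct and follows exactly the approach the paper intends: Corollary~\ref{Corollary 1.4} is stated without proof as a direct specialization of Theorems~\ref{Theorem 1.1}--\ref{Theorem 1.3} to $\gamma = 2$, and you have correctly sorted the four parts by dimension into the three regimes $\gamma < 2^\ast/2$, $\gamma = 2^\ast/2$, and $\gamma > 2^\ast/2$ and carried out the algebraic substitutions that the paper leaves implicit. Your handling of part~(i), in particular the use of the defining relation $t_0^2 = S^3(a + bt_0)$ and the identity $(4a + b^2 S^3)\sqrt{b^2 S^6 + 4aS^3} = (4aS + b^2 S^4)^{3/2}$, is precisely the computation the paper alludes to later in the remark following Corollary~\ref{Corollary 1.7}.
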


\begin{remark}
The threshold levels in Corollary \ref{Corollary 1.4} \ref{Corollary 1.4.i}--\ref{Corollary 1.4.iii} were also obtained using different arguments in Naimen \cite{MR3278854}, Naimen \cite{MR3210026}, and Liao et al.\! \cite{MR3861499}, respectively.
\end{remark}

We have the following existence and multiplicity results for problem \eqref{1.3} (see Corollary \ref{Corollary 3.6}, Theorem \ref{Theorem 3.10}, and Theorem \ref{Theorem 3.11}).

\begin{theorem} \label{Theorem 1.6}
If $1 < \gamma < 2^\ast/2$, $a, b > 0$, and $N \ge 4$, then problem \eqref{1.3} has a nontrivial solution for $0 < \lambda < a \lambda_1$.
\end{theorem}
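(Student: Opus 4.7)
The plan is to prove the theorem via the mountain pass lemma, using the Brezis--Nirenberg instanton estimates to push the mountain pass level strictly below the compactness threshold $c^\ast$ furnished by Theorem~\ref{Theorem 1.1}.

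First I would verify the mountain pass geometry. Since $\lambda < a\lambda_1$, the Poincar\'e inequality gives $\tfrac{a}{2}\int_\Omega |\nabla u|^2\,dx - \tfrac{\lambda}{2}\int_\Omega u^2\,dx \ge \tfrac{a\lambda_1-\lambda}{2\lambda_1}\int_\Omega |\nabla u|^2\,dx$, and since $2\gamma < 2^\ast$ the $b$-term is Sobolev-subcritical, so the origin is a strict local minimum of $J$. For any fixed $u \not\equiv 0$ the critical term drives $J(tu) \to -\infty$ as $t \to +\infty$, producing the required well-shape.

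Next, let $u_\eps$ be the standard Brezis--Nirenberg truncation of an Aubin--Talenti extremal concentrated at an interior point of $\Omega$, and set $A = \int_\Omega|\nabla u_\eps|^2\,dx$, $B = \int_\Omega|u_\eps|^{2^\ast}\,dx$, $D = \int_\Omega u_\eps^2\,dx$. Recall that $A = S^{N/2} + \O(\eps^{N-2})$, $B = S^{N/2} + \O(\eps^N)$, and that $D \ge K\eps^2$ for $N \ge 5$ and $D \ge K\eps^2|\log\eps|$ for $N = 4$, with $K > 0$. The change of variable $r = t^2 A$ rewrites $\phi_\eps(t) := J(tu_\eps)$ as
\[
\tilde\phi_\eps(r) = \tfrac{1}{2}\Big(a - \tfrac{\lambda D}{A}\Big)r + \tfrac{b}{2\gamma}r^\gamma - \tfrac{1}{2^\ast}\tfrac{B}{A^{2^\ast\!/2}}\, r^{2^\ast\!/2}.
\]
The limiting profile $F(r) = \tfrac{a}{2}r + \tfrac{b}{2\gamma}r^\gamma - \tfrac{S^{-2^\ast\!/2}}{2^\ast}r^{2^\ast\!/2}$ has a unique positive critical point at $r = t_0$, and evaluating $F$ there using $at_0 + bt_0^\gamma = S^{-2^\ast\!/2}t_0^{2^\ast\!/2}$ produces exactly $F(t_0) = c^\ast$. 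Because $F'(t_0) = 0$ and $F''(t_0) < 0$, a Taylor expansion combined with $B/A^{2^\ast\!/2} = S^{-2^\ast\!/2} + \O(\eps^{N-2})$ yields
\[
\max_{t\ge 0}\phi_\eps(t) \;=\; c^\ast - \frac{\lambda t_0}{2A}\,D + \O(\eps^{N-2}).
\]
For $N \ge 5$ the correction is of order $\eps^2$ and dominates the $\eps^{N-2}$ error; for $N = 4$ it is of order $\eps^2|\log\eps|$ and again dominates the $\eps^2$ error. In either case $\max_t\phi_\eps(t) < c^\ast$ for all sufficiently small $\eps$.

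Finally, fix such an $\eps$ and pick $T > 0$ large enough that $J(Tu_\eps) < 0$. The path $s \mapsto sTu_\eps$ then lies below $c^\ast$ in energy, so the mountain pass lemma produces a \PS{c} sequence at some level $c \in (0,c^\ast)$; Theorem~\ref{Theorem 1.1} supplies a strongly convergent subsequence, whose limit is a critical point of $J$ at a strictly positive level, hence nontrivial. The main obstacle is the fine asymptotic analysis in the second paragraph: one must simultaneously track the $\lambda$-contribution and the Sobolev-type error terms in the coefficients of $\tilde\phi_\eps$, as well as the induced shift of the maximizer from $t_0$, and then verify that the former strictly dominates. This is exactly where the restriction $N \ge 4$ enters; for $N = 3$ one has $D = \O(\eps)$ of the same order as the Sobolev error, so the above perturbative comparison breaks down.
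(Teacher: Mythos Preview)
Your argument is correct and follows the same overall architecture as the paper---mountain pass with Br\'ezis--Nirenberg test functions, pushing the minimax level below the threshold $c^\ast$ from Theorem~\ref{Theorem 1.1}---but the verification of the key inequality $\max_{s\ge 0} J(sv_\eps)<c^\ast$ proceeds differently. You do it by a direct perturbative computation: you identify the limiting one-variable profile $F(r)$, check that its unique positive critical point is $t_0$ with $F(t_0)=c^\ast$ and $F''(t_0)<0$, and then Taylor-expand to isolate the leading correction $-\tfrac{\lambda t_0}{2A}D$ against the $\O(\eps^{N-2})$ Sobolev error. The paper instead argues by contradiction (Lemma preceding the proof of Theorem~\ref{Theorem 1.10}): assuming a sequence $\eps_j\to 0$ with $\max_s J(sv_{\eps_j})\ge c^\ast$, it extracts critical points, shows the rescaled variable $t_j\to t_0$, uses the monotonicity of $h(t)/t^{2^\ast\!/2-1}$ from \ref{A3} to get $t_j\le t_0$, and then the monotonicity of $K$ to get $K(t_j)\le c^\ast$, which contradicts an identity derived from the critical-point equation.

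Your direct expansion is the more classical Br\'ezis--Nirenberg computation and is arguably cleaner for this specific model $h(t)=a+bt^{\gamma-1}$; in particular the nondegeneracy $F''(t_0)<0$ is easy to check here. The paper's contradiction argument, by contrast, is engineered to prove the general Theorem~\ref{Theorem 1.10} for arbitrary $h$ satisfying \ref{A1}--\ref{A3}, where a Taylor approach would require extra structural hypotheses. Both routes pinpoint the same mechanism---the $\lambda$-term contributes a correction that strictly dominates the Sobolev error exactly when $N\ge 4$---and your closing remark about why $N=3$ fails is on target.
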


\begin{theorem}
Let $\gamma = 2^\ast/2$.
\begin{enumroman}
\item If $a > 0$, $0 < b < S^{-2^\ast\!/2}$, and $N \ge 4$, then problem \eqref{1.3} has a nontrivial solution for $0 < \lambda < a \lambda_1$.
\item If $a = 0$ and $b > S^{-2^\ast\!/2}$, then problem \eqref{1.3} has a nontrivial solution for all $\lambda > 0$.
\item If $a > 0$ and $b > S^{-2^\ast\!/2}$, then problem \eqref{1.3} has two nontrivial solutions for $\lambda > a \lambda_1$.
\end{enumroman}
\end{theorem}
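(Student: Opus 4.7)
The plan is to treat the three parts in parallel, combining the compactness information from Corollary \ref{Corollary 1.7} with either a mountain-pass argument or a direct minimization.

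For (i), the assumptions $0 < \lambda < a\lambda_1$ and $b < S^{-2^\ast\!/2}$ ensure that the origin is a strict local minimum of $J$, and that $J(tu_\eps) \to -\infty$ as $t \to \infty$ along a standard Aubin--Talenti family $u_\eps$ concentrated at an interior point of $\Omega$, because $\int_\Omega |u_\eps|^{2^\ast} dx / \bigl(\int_\Omega |\nabla u_\eps|^2\, dx\bigr)^{2^\ast\!/2} \to S^{-2^\ast\!/2} > b$. The plan is then to compute $\max_{t \ge 0} J(tu_\eps)$: the leading order of this quantity matches exactly the closed-form threshold $c^\ast$ of Corollary \ref{Corollary 1.7}(i), and a Brezis--Nirenberg-style expansion of the lower-order correction contributed by $-(\lambda/2)\int_\Omega u_\eps^2\, dx$ yields the strict inequality $\max_t J(tu_\eps) < c^\ast$ (this is where the assumption $N \ge 4$ enters). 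The \PS{c} condition below $c^\ast$ then produces the mountain-pass critical point.

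For (ii), combining $b > S^{-2^\ast\!/2}$ with the Sobolev inequality gives
\[
J(u) \ge \frac{b - S^{-2^\ast\!/2}}{2^\ast}\left(\int_\Omega |\nabla u|^2\, dx\right)^{2^\ast\!/2} - \frac{\lambda}{2}\int_\Omega u^2\, dx,
\]
so $J$ is coercive and bounded below. Since Corollary \ref{Corollary 1.7}(ii) supplies the \PS{c} condition at every $c \in \R$, the infimum of $J$ is attained; testing with $t\phi_1$ for small $t > 0$ makes the infimum strictly negative, so the minimizer is nontrivial. For (iii), the same bound with the additional nonnegative term $(a/2)\int_\Omega |\nabla u|^2\, dx$ still yields coercivity and the \PS{c} condition at every level. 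Because $\lambda > a\lambda_1$, evaluating $J(t\phi_1) = \tfrac{t^2}{2}(a\lambda_1 - \lambda)\int_\Omega \phi_1^2\, dx + O(t^{2^\ast})$ gives $J(t\phi_1) < 0$ for small $t > 0$, so the global infimum is strictly negative and is attained at some $u_1 \ne 0$, providing the first nontrivial solution. For the second, by evenness $-u_1 \ne u_1$ is also a global minimizer; the classical two-local-minima version of the mountain-pass theorem applied between $u_1$ and $-u_1$ produces a critical value $c > J(u_1)$, and the \PS{c} condition then furnishes a critical point $u_2$ with $J(u_2) > J(\pm u_1)$, hence distinct from $\pm u_1$.

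The hardest part will be the mountain-pass level estimate in (i): matching the leading Aubin--Talenti expansion of $\max_t J(tu_\eps)$ to the precise closed-form threshold $c^\ast$ and extracting the strict improvement from the $-\lambda\int_\Omega u_\eps^2\, dx$ correction require a careful, dimension-dependent computation. A subtler but less severe point in (iii) is ensuring that the global minimizer $u_1$ is isolated, or equivalently invoking a version of the mountain-pass theorem that tolerates non-isolated minima, so that the resulting mountain-pass critical value lies strictly above $J(u_1)$.
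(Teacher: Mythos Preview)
Your treatment of parts (i) and (ii) coincides with the paper's. For (i), the paper runs exactly the mountain-pass-below-threshold argument you describe (packaged as Theorem~\ref{Theorem 1.10} and specialized in Corollary~\ref{Corollary 3.6}), with the Aubin--Talenti estimates supplying the strict inequality $\max_{s\ge 0} J(sv_\eps) < c^\ast$ when $N \ge 4$. For (ii), the paper minimizes globally using Corollary~\ref{Corollary 2.11}~\ref{Corollary 2.11.i} and Theorem~\ref{Theorem 3.10}; your reference to ``Corollary~\ref{Corollary 1.7}(ii)'' is a slip, since Corollary~\ref{Corollary 1.7} only covers $0 < b < S^{-2^\ast\!/2}$, but the intended compactness input is the same.

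For part (iii) the paper proceeds differently. Rather than exploiting evenness and running a mountain pass between $u_1$ and $-u_1$, it applies the Brezis--Nirenberg local-linking result (Proposition~\ref{Proposition 3.12}) with the eigenspace splitting $H^1_0(\Omega) = V \oplus W$, where $V$ is spanned by the eigenfunctions of $\lambda_1,\dots,\lambda_k$ and $a\lambda_k < \lambda \le a\lambda_{k+1}$; one checks $J(v) \le 0$ for small $v \in V$ and $J(w) \ge 0$ for small $w \in W$, and Proposition~\ref{Proposition 3.12} then delivers two nontrivial critical points directly (see Theorem~\ref{Theorem 3.11}~\ref{Theorem 3.11.i}). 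This also handles the resonant case $\lambda = a\lambda_{k+1}$ uniformly.

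Your route for (iii) has a gap beyond the isolation issue you flag. Even granting a mountain-pass critical point $u_2$ at level $c \ge J(u_1)$, you must still show $u_2 \ne 0$, i.e.\ $c \ne 0$. When $a\lambda_1 < \lambda < a\lambda_2$ the origin is a nondegenerate saddle of Morse index~$1$, precisely the type of critical point a mountain pass between two wells detects, and nothing in your argument excludes $c = J(0) = 0$. (Note also that if one were content to count $u_1$ and $-u_1$ as the ``two'' solutions, the same would apply in part (ii), where only one is claimed.) The local-linking approach sidesteps this difficulty entirely.
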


\begin{theorem} \label{Theorem 1.8}
If $\gamma > 2^\ast/2$ and
\[
a^{\gamma-2^\ast\!/2}\, b^{2^\ast\!/2-1} > \frac{(\gamma - 2^\ast/2)^{\gamma-2^\ast\!/2}\, (2^\ast/2 - 1)^{2^\ast\!/2-1}}{(\gamma - 1)^{\gamma-1}}\, S^{-(2^\ast\!/2)(\gamma-1)},
\]
then problem \eqref{1.3} has two nontrivial solutions for $\lambda \ge a \lambda_1$.
\end{theorem}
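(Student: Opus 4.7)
The assumption on $a,b,\gamma$ is exactly the one appearing in Theorem~\ref{Theorem 1.3}, so $J$ satisfies \PS{c} for every $c \in \R$. Combining this with the coercivity of $J$ (which follows because $2\gamma > 2^\ast$: by Sobolev $\|u\|_{2^\ast}^{2^\ast} \le S^{-2^\ast\!/2}\|\nabla u\|_2^{2^\ast}$ and Poincar\'e $\|u\|_2^2 \le \lambda_1^{-1}\|\nabla u\|_2^2$ one has
\[
J(u) \ge \frac{b}{2\gamma}\|\nabla u\|_2^{2\gamma} - \frac{S^{-2^\ast\!/2}}{2^\ast}\|\nabla u\|_2^{2^\ast} - \frac{\lambda}{2\lambda_1}\|\nabla u\|_2^2,
\]
and the $\|\nabla u\|_2^{2\gamma}$ term dominates as $\|\nabla u\|_2 \to \infty$), we obtain a variational setting in which classical critical point theory applies freely.

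The first nontrivial solution comes from direct minimization. Since $J$ is $C^1$, coercive, and sequentially weakly lower semicontinuous, the infimum $m := \inf_{H^1_0(\Omega)} J$ is attained at some $u_1$, and by evenness of $J$ so is $-u_1$. To check $u_1 \ne 0$, I would evaluate $J$ at $t\varphi_1$ for $t>0$ small: if $\lambda > a\lambda_1$, the leading contribution is the negative quadratic $(a\lambda_1-\lambda)t^2\|\varphi_1\|_2^2/2 < 0$; if $\lambda = a\lambda_1$, the quadratic part vanishes and the dominant term for small $t$ becomes $-t^{2^\ast}\|\varphi_1\|_{2^\ast}^{2^\ast}/2^\ast$ (the positive $t^{2\gamma}$ correction is negligible since $2\gamma > 2^\ast$). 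Either way $J(t\varphi_1) < 0$ for small $t>0$, so $m < 0$ and $u_1 \ne 0$.

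The second nontrivial solution comes from a two-well mountain pass between the distinct strict local minima $\pm u_1$. Setting
\[
\Gamma = \set{\gamma \in C([0,1], H^1_0(\Omega)) : \gamma(0)=u_1,\ \gamma(1)=-u_1}, \qquad c = \inf_{\gamma \in \Gamma}\max_{s \in [0,1]} J(\gamma(s)),
\]
the Pucci--Serrin minimax theorem and the \PS{c} condition produce a critical point $u_2$ at level $c \ge m$ with $u_2 \ne \pm u_1$. The principal obstacle is to rule out $u_2 = 0$: since $J(0) = 0$ and $0$ is itself a critical point, one must verify $c \ne 0$, which I would do by exhibiting an explicit path $\gamma \in \Gamma$ with $\max_s J(\gamma(s)) < 0$, forcing $c < 0$. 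Such a path can be built from the negative direction $\varphi_1$ at the origin (guaranteed by $\lambda \ge a\lambda_1$ together with $2\gamma > 2^\ast$) by interpolating inside $\mathrm{span}(u_1,\varphi_1)$ in a way that bypasses $0$. Executing this construction carefully so that $J$ remains strictly negative along the entire path, especially in the borderline regime $\lambda$ just above $a\lambda_1$ where the negative cone at $0$ is only one-dimensional, is the main technical step.
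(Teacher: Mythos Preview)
Your compactness and coercivity discussion is correct, and your first step (global minimization) is fine: $J$ is coercive, bounded below, satisfies \PS{c} for all $c$, so a global minimizer $u_1$ exists, and your computation of $J(t\varphi_1)$ shows $J(u_1)<0$, hence $u_1\ne 0$. Since $J$ is even for problem~\eqref{1.3}, $-u_1$ is a second nontrivial critical point, and this already proves Theorem~\ref{Theorem 1.8} as literally stated. Your mountain-pass step is not needed for the result.

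The paper takes a different route: it deduces Theorem~\ref{Theorem 1.8} from Theorem~\ref{Theorem 3.11}\ref{Theorem 3.11.ii}, which treats the more general problem~\eqref{3.22} where $J$ need not be even. There the second solution cannot come from symmetry, so the paper invokes the Brezis--Nirenberg local-linking theorem (Proposition~\ref{Proposition 3.12}): one decomposes $H^1_0(\Omega)=V\oplus W$ along the first $k$ eigenspaces and checks $J\le 0$ on $V\cap B_r$ and $J\ge 0$ on $W\cap B_r$. The case $\lambda=a\lambda_k$ is handled precisely because $2\gamma>2^\ast$, so on $V$ the term $-\frac{1}{2^\ast}\int|v|^{2^\ast}$ dominates $\frac{b}{2\gamma}\|\nabla v\|^{2\gamma}$ for small $v$. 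This gives two nontrivial critical points without any path construction.

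Your mountain-pass argument, by contrast, has a real gap. You need a path from $u_1$ to $-u_1$ along which $J<0$, and you propose interpolating in $\mathrm{span}(u_1,\varphi_1)$ while ``bypassing $0$.'' But there is no a~priori relationship between the global minimizer $u_1$ and $\varphi_1$, and for $a\lambda_1\le\lambda<a\lambda_2$ the quadratic part of $J$ at $0$ has Morse index at most one: near the origin the sublevel set $\{J<0\}$ lies in a thin neighborhood of the $\varphi_1$-axis and splits into two pieces around $\pm\varphi_1$ that are \emph{not} connected there. There is no reason these pieces become connected farther out, nor that $u_1$ lies in either of them; restricting to a two-dimensional slice does nothing to fix this. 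So the claim $c<0$ cannot be obtained from your sketch, and if $c=0$ the mountain-pass critical point may well be the origin. If you genuinely want a third critical point distinct from $0,\pm u_1$, the local-linking machinery the paper uses is the natural tool; the two-well mountain pass is not.
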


Theorems \ref{Theorem 1.6}--\ref{Theorem 1.8} have the following corollaries for the classical case $\gamma = 2$.

\begin{corollary} \label{Corollary 1.10}
Let $\gamma = 2$ and $N = 4$.
\begin{enumroman}
\item \label{Corollary 1.10.i} If $a > 0$ and $0 < b < S^{-2}$, then problem \eqref{1.3} has a nontrivial solution for $0 < \lambda < a \lambda_1$.
\item If $a = 0$ and $b > S^{-2}$, then problem \eqref{1.3} has a nontrivial solution for all $\lambda > 0$.
\item If $a > 0$ and $b > S^{-2}$, then problem \eqref{1.3} has two nontrivial solutions for $\lambda > a \lambda_1$.
\end{enumroman}
\end{corollary}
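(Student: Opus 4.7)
The plan is simple: Corollary \ref{Corollary 1.10} is a direct specialization to $N = 4$ of the preceding $\gamma = 2^\ast/2$ existence and multiplicity theorem. When $N = 4$ the critical Sobolev exponent is $2^\ast = 4$, so $2^\ast/2 = 2$ and the classical Kirchhoff exponent $\gamma = 2$ coincides exactly with the borderline case of the general theory. The threshold $S^{-2^\ast/2}$ appearing there reads $S^{-2}$ in four dimensions, which matches the threshold in all three parts of the corollary.

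Concretely, I would prove the three items by a one-to-one invocation of the corresponding parts of that theorem: part (i) (nontrivial solution when $a > 0$, $0 < b < S^{-2}$, $0 < \lambda < a \lambda_1$) follows from part (i); part (ii) (nontrivial solution for $a = 0$, $b > S^{-2}$, any $\lambda > 0$) follows from part (ii); and part (iii) (two nontrivial solutions when $a > 0$, $b > S^{-2}$, $\lambda > a \lambda_1$) follows from part (iii). Since the model problem \eqref{1.3} with $\gamma = 2$ is literally the case $\gamma = 2^\ast/2$ of the general setting in dimension four, no additional hypothesis needs to be checked.

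No new obstacle arises at this stage because the substantive work is already encoded in the general theorem being invoked: its proof rests on the compactness threshold of Theorem \ref{Theorem 1.1}'s $\gamma = 2^\ast/2$ analogue, combined with a mountain pass argument below the threshold $c^\ast$ for part (i) and with linking or Morse-type constructions under the global Palais-Smale regime $c^\ast = +\infty$ for parts (ii) and (iii). The only bookkeeping step is to confirm that the arithmetic identity $N = 4 \Longleftrightarrow 2^\ast/2 = 2$ places us exactly in the regime $\gamma = 2^\ast/2$, rather than the subcritical regime $\gamma < 2^\ast/2$ covered by Theorem \ref{Theorem 1.6} or the supercritical regime $\gamma > 2^\ast/2$ covered by Theorem \ref{Theorem 1.8}. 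This alignment makes the corollary a direct translation of the general $N$-dimensional statement and nothing more.
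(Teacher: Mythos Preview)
Your proposal is correct and matches the paper's approach: the paper presents Corollary~\ref{Corollary 1.10} without proof, as an immediate specialization of the $\gamma = 2^\ast/2$ existence and multiplicity theorem (the one between Theorems~\ref{Theorem 1.6} and~\ref{Theorem 1.8}) to the case $N = 4$, where $2^\ast/2 = 2$ and $S^{-2^\ast/2} = S^{-2}$. Your one-to-one matching of parts (i)--(iii) is exactly the intended derivation.
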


\begin{corollary} \label{Corollary 1.11}
If $\gamma = 2$, $N \ge 5$, and
\[
a^{N-4}\, b^2 > \dfrac{4\, (N - 4)^{N-4}}{(N - 2)^{N-2}}\, S^{-N},
\]
then problem \eqref{1.3} has two nontrivial solutions for $\lambda \ge a \lambda_1$.
\end{corollary}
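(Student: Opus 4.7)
The plan is to deduce this corollary directly from Theorem \ref{Theorem 1.8} by specializing to the classical Kirchhoff exponent $\gamma = 2$. First I would verify the hypothesis $\gamma > 2^\ast/2$ of that theorem: for $N \ge 5$ one has $2^\ast/2 = N/(N-2) \le 5/3 < 2 = \gamma$, so this is automatic. The remaining task is therefore purely algebraic, namely to show that the assumed inequality on $a^{N-4}\, b^2$ is equivalent to the general inequality of Theorem \ref{Theorem 1.8} when $\gamma = 2$.

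To carry out that reduction I would first compute
\[
\gamma - \frac{2^\ast}{2} = \frac{N-4}{N-2}, \qquad \frac{2^\ast}{2} - 1 = \frac{2}{N-2}, \qquad \gamma - 1 = 1,
\]
so that the hypothesis of Theorem \ref{Theorem 1.8} reads
\[
a^{(N-4)/(N-2)}\, b^{2/(N-2)} > \left(\frac{N-4}{N-2}\right)^{(N-4)/(N-2)} \left(\frac{2}{N-2}\right)^{2/(N-2)} S^{-N/(N-2)}.
\]
Raising both sides to the $(N-2)$-th power and combining the powers of $N-2$ in the denominator, the right-hand side becomes
\[
\frac{(N-4)^{N-4}}{(N-2)^{N-4}} \cdot \frac{4}{(N-2)^{2}} \cdot S^{-N} = \frac{4\, (N-4)^{N-4}}{(N-2)^{N-2}}\, S^{-N},
\]
which is exactly the condition on $a$ and $b$ assumed in the statement of Corollary \ref{Corollary 1.11}. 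Applying Theorem \ref{Theorem 1.8} then yields two nontrivial solutions of \eqref{1.3} for every $\lambda \ge a \lambda_1$.

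Since the argument is a direct specialization, there is no genuine obstacle: the substantive content lies entirely in Theorem \ref{Theorem 1.8}, and the only care required is the exponent bookkeeping above, which confirms that the general threshold appearing in Theorem \ref{Theorem 1.8} reduces to the explicit dimensional constant $4\,(N-4)^{N-4}/(N-2)^{N-2}$ in the classical Kirchhoff case.
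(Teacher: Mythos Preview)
Your proposal is correct and is exactly the intended argument: the paper states Corollary \ref{Corollary 1.11} as an immediate specialization of Theorem \ref{Theorem 1.8} to $\gamma = 2$ without giving a separate proof, so the only content is the exponent bookkeeping you carried out, and your computations are accurate.
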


\begin{remark}
The result in Corollary \ref{Corollary 1.10} \ref{Corollary 1.10.i} was also obtained in Naimen \cite{MR3210026} using a different method. Liao et al.\! \cite{MR3861499} obtained one nontrivial solution when $a \ge 0$, $b > S^{-2}$, and $\lambda > a \lambda_1$. See also Perera and Zhang \cite{MR2193850} for a related result in the subcritical case in dimensions $N \le 3$.
\end{remark}

\begin{remark}
Corollary \ref{Corollary 1.11} complements the results in Naimen and Shibata \cite{MR3987393}, where two positive solutions were obtained when $a = 1$, $b > 0$ is sufficiently small, and $0 < \lambda < \lambda_1$.
\end{remark}

In the borderline case where $\gamma = 2^\ast/2$ and $b = S^{-2^\ast\!/2}$, lower-order terms come into play. Consider the problem
\begin{equation} \label{1.4}
\left\{\begin{aligned}
- \left[a + S^{-2^\ast\!/2} \left(\int_\Omega |\nabla u|^2\, dx\right)^{2^\ast\!/2-1} + \eta \left(\int_\Omega |\nabla u|^2\, dx\right)^{\sigma-1}\right] & = \lambda u + |u|^{2^\ast-2}\, u && \text{in } \Omega\\[5pt]
u & = 0 && \text{on } \bdry{\Omega},
\end{aligned}\right.
\end{equation}
where $a \ge 0$, $\eta > 0$, $1 < \sigma < 2^\ast/2$, and $\lambda > 0$. We have the following existence and multiplicity result (see Theorem \ref{Theorem 3.13}).

\begin{theorem} \label{Theorem 1.13}
Let $\eta > 0$ and $1 < \sigma < 2^\ast/2$.
\begin{enumroman}
\item If $a = 0$, then problem \eqref{1.4} has a nontrivial solution for all $\lambda > 0$.
\item If $a > 0$, then problem \eqref{1.4} has two nontrivial solutions for $\lambda > a \lambda_1$.
\end{enumroman}
\end{theorem}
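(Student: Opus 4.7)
The plan is to apply Theorem~\ref{Theorem 2.3} to check the Palais--Smale condition at every level, and then obtain the existence results by direct minimization (for case~(i) and the first solution in case~(ii)) and by a symmetric minimax argument (for the second solution in case~(ii)). Let $J$ denote the energy functional associated with problem~\eqref{1.4}, whose nonlocal coefficient is $h(t) = a + S^{-2^\ast\!/2}\, t^{2^\ast\!/2-1} + \eta\, t^{\sigma-1}$. The first step is to verify that $J$ satisfies \PS{c} for every $c \in \R$. According to the variational characterization in Theorem~\ref{Theorem 2.3}, the compactness threshold $c^\ast$ is driven by positive roots of the equation $h(t) = S^{-2^\ast\!/2}\, t^{2^\ast\!/2-1}$, which for our $h$ reduces to $a + \eta\, t^{\sigma-1} = 0$. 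Since $a \ge 0$ and $\eta > 0$, no such positive root exists, so $c^\ast = +\infty$.

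Next I would set up the variational geometry. The Sobolev inequality $\|u\|_{2^\ast}^{2^\ast} \le S^{-2^\ast\!/2}\, \bigl(\int_\Omega |\nabla u|^2\, dx\bigr)^{2^\ast\!/2}$ makes the two critical contributions in $J$ combine nonnegatively, yielding
\[
J(u) \ge \frac{a}{2}\int_\Omega |\nabla u|^2\, dx + \frac{\eta}{2\sigma}\bigg(\int_\Omega |\nabla u|^2\, dx\bigg)^\sigma - \frac{\lambda}{2\lambda_1}\int_\Omega |\nabla u|^2\, dx.
\]
Since $\sigma > 1$, the right-hand side is coercive and bounded below, so the same is true of $J$. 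Expanding at $u = t\phi_1$, where $\phi_1 > 0$ is a first Dirichlet eigenfunction of $-\Delta$, gives $J(t\phi_1) = \tfrac{1}{2}(a\lambda_1 - \lambda)\,t^2 \int_\Omega \phi_1^2\, dx + o(t^2)$, which is negative for small $t > 0$ whenever $a = 0$ and $\lambda > 0$ (case~(i)) or $a > 0$ and $\lambda > a\lambda_1$ (case~(ii)). Hence $\inf J < 0 = J(0)$. Coercivity, the \PS condition at the level $\inf J$, and $\inf J < 0$ then force $J$ to attain its infimum at some $u_1 \ne 0$, which is a nontrivial weak solution of~\eqref{1.4}; this settles case~(i) and provides the first solution in case~(ii).

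For the second solution in case~(ii), I would use the $\mathbb{Z}_2$-symmetry of $J$ together with the indefinite quadratic part at the origin forced by $\lambda > a\lambda_1$, following the minimax scheme employed in Theorem~\ref{Theorem 3.11}. Decomposing $H^1_0(\Omega) = V^- \oplus V^+$, where $V^-$ is the (finite-dimensional) span of those Dirichlet eigenfunctions of $-\Delta$ whose eigenvalues do not exceed $\lambda/a$, the positive quadratic part on $V^+$ together with the higher-order nonlocal terms yields $J \ge \alpha > 0$ on a small sphere of $V^+$, while the nonpositive quadratic part together with coercivity gives the far-field estimates on a suitable symmetric box in $V^-$. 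The associated linking/saddle value $c \ge \alpha > 0 > J(u_1)$ is a critical value by the \PS condition at every level, and produces a second nontrivial critical point $u_2 \ne \pm u_1$.

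The main obstacle is the first step: translating the absence of a positive root of $a + \eta\, t^{\sigma-1} = 0$, via the variational characterization of Theorem~\ref{Theorem 2.3}, into the statement $c^\ast = +\infty$. The borderline choice $b = S^{-2^\ast\!/2}$ makes the purely critical contributions cancel, and it is precisely the subcritical correction $\eta\, t^{\sigma-1}$ that closes the gap and rules out every possible concentration profile. A secondary technical issue is that the dimension of $V^-$ in case~(ii) can be arbitrarily large when $\lambda$ sits well above $a\lambda_1$, but the sphere/box estimates for the linking remain uniform thanks to the coercivity of the $\sigma$-order nonlocal term and the strict positivity of the critical combination on subspaces avoiding Talenti profiles.
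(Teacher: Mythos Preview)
Your compactness argument has a citation problem: Theorem~\ref{Theorem 2.3} explicitly assumes $I \ne \emptyset$, whereas you have just argued that $I = \emptyset$. The correct reference is Theorem~\ref{Theorem 2.4} (condition~\ref{A4}$(i)$ holds with $\gamma = \sigma$, since here $f(x,t)=\lambda t$ gives $p = 2 < 2\sigma$), or Proposition~\ref{Proposition 2.2} combined with your coercivity bound. Your minimization argument for part~(i) and for the first solution in~(ii) is correct and matches the paper.

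The genuine gap is your scheme for the second solution in~(ii). You describe a global linking, with ``far-field estimates on a suitable symmetric box in $V^-$'', but coercivity of $J$ rules this out: $J(v) \to +\infty$ along every ray, including rays in $V^-$, so the boundary of any large box carries arbitrarily large values of $J$, and a linking inequality of the form $\sup_{\partial Q} J < \inf_{S} J$ cannot be arranged. The scheme actually used in Theorem~\ref{Theorem 3.11} (and in the paper's proof of Theorem~\ref{Theorem 3.13}, which specializes to the present statement) is the \emph{local} linking result of Brezis--Nirenberg, stated as Proposition~\ref{Proposition 3.12}: one only needs $J \le 0$ on $V \cap B_r(0)$ and $J \ge 0$ on $W \cap B_r(0)$ for some small $r > 0$, together with $\inf J < 0$ and the Palais--Smale condition at every level, to obtain two nontrivial critical points directly. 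Your splitting also mishandles the resonant case $\lambda = a\lambda_{k+1}$: if the resonant eigenspace is placed in $V^-$, the quadratic part of $J$ vanishes there while the remaining terms are strictly positive (the two $2^\ast$-contributions combine nonnegatively by the Sobolev inequality, and the $\sigma$-term is positive), so $J > 0$ on small nonzero resonant vectors and the needed sign condition on $V^-$ fails. The paper avoids this by placing the resonant eigenspace in $W$ and using the Sobolev inequality together with the nonnegativity of $a\int_\Omega |\nabla w|^2\,dx - \lambda \int_\Omega w^2\,dx$ on $W$ to verify $J(w) \ge 0$ there.
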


\section{Compactness threshold}

A weak solution of problem \eqref{1.1} is a function $u$ that belongs to the Sobolev space $H^1_0(\Omega)$ and satisfies
\[
h\bigg(\int_\Omega |\nabla u|^2\, dx\bigg) \int_\Omega \nabla u \cdot \nabla v\, dx = \int_\Omega f(x,u)\, v\, dx + \int_\Omega |u|^{2^\ast-2}\, uv\, dx \quad \forall v \in H^1_0(\Omega).
\]
Weak solutions coincide with critical points of the $C^1$-functional
\[
J(u) = \half\, H\bigg(\int_\Omega |\nabla u|^2\, dx\bigg) - \int_\Omega F(x,u)\, dx - \frac{1}{2^\ast} \int_\Omega |u|^{2^\ast} dx, \quad u \in H^1_0(\Omega),
\]
where $F(x,t) = \int_0^t f(x,s)\, ds$ is the primitive of $f$.

\begin{definition}
The functional $J$ satisfies the Palais-Smale compactness condition at the level $c \in \R$, or the \PS{c} condition for short, if every sequence $\seq{u_j}$ in $H^1_0(\Omega)$ such that
\[
J(u_j) \to c \qquad J'(u_j) \to 0,
\]
called a \PS{c} sequence, has a strongly convergent subsequence.
\end{definition}

Let
\begin{equation} \label{1.7}
S = \inf_{u \in H^1_0(\Omega) \setminus \set{0}}\, \frac{\dint_\Omega |\nabla u|^2\, dx}{\left(\dint_\Omega |u|^{2^\ast} dx\right)^{2/2^\ast}}
\end{equation}
be the best Sobolev constant. The set
\[
I = \set{t > 0 : h(t) \le S^{-2^\ast\!/2}\, t^{2^\ast\!/2-1}}
\]
will play an important role in our compactness results. We begin with a simple but useful proposition.

\begin{proposition} \label{Proposition 2.2}
If $\seq{u_j}$ is a sequence in $H^1_0(\Omega)$ such that
\[
J'(u_j) \to 0, \qquad u_j \wto u, \qquad \norm{u_j - u}^2 \to t,
\]
then either $t = 0$ or $t \in I$. In particular, if $I = \emptyset$, then every bounded sequence $\seq{u_j}$ in $H^1_0(\Omega)$ such that $J'(u_j) \to 0$ has a strongly convergent subsequence.
\end{proposition}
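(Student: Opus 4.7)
The plan is to work with the weak-limit decomposition $v_j := u_j - u$, which satisfies $v_j \wto 0$ in $H^1_0(\Omega)$. Weak convergence gives $\int_\Omega \nabla v_j \cdot \nabla u\, dx \to 0$, so $\norm{u_j}^2 = \norm{v_j}^2 + \norm{u}^2 + o(1) \to \norm{u}^2 + t$ and, by continuity, $h(\norm{u_j}^2) \to h(\norm{u}^2 + t)$. This constant will play the role of the effective Kirchhoff coefficient in every limit below.

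First I would pass to the limit in $\langle J'(u_j), w\rangle \to 0$ for fixed $w \in H^1_0(\Omega)$, using weak convergence for the diffusion term, the compact embedding $H^1_0(\Omega) \hookrightarrow L^p(\Omega)$ combined with \eqref{1.2} for $\int_\Omega f(x,u_j)\, w\, dx$, and a.e.\ convergence together with the uniform $L^{2^\ast/(2^\ast-1)}$ bound on $|u_j|^{2^\ast-2}u_j$ for the critical term. This yields that $u$ weakly solves
\[
h(\norm{u}^2 + t) \int_\Omega \nabla u \cdot \nabla w\, dx = \int_\Omega f(x,u)\, w\, dx + \int_\Omega |u|^{2^\ast-2} u w\, dx \quad \forall w \in H^1_0(\Omega),
\]
and, testing with $u$, I obtain the identity $h(\norm{u}^2 + t)\, \norm{u}^2 = \int_\Omega f(x,u)\, u\, dx + \int_\Omega |u|^{2^\ast}\, dx$.

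Next I would apply $\langle J'(u_j), u_j\rangle \to 0$, combining the convergence $h(\norm{u_j}^2)\norm{u_j}^2 \to h(\norm{u}^2+t)(\norm{u}^2+t)$, the compact convergence $\int_\Omega f(x,u_j)\, u_j\, dx \to \int_\Omega f(x,u)\, u\, dx$, and the Brezis--Lieb lemma $\int_\Omega |u_j|^{2^\ast}dx = \int_\Omega |u|^{2^\ast}dx + \int_\Omega |v_j|^{2^\ast}dx + o(1)$. Subtracting the identity from the previous paragraph gives the key scalar relation
\[
h(\norm{u}^2 + t)\, t = \lim_{j\to\infty} \int_\Omega |v_j|^{2^\ast}\, dx,
\]
and the Sobolev inequality $\int_\Omega |v_j|^{2^\ast}\, dx \le S^{-2^\ast\!/2}\, \norm{v_j}^{2^\ast}$ together with $\norm{v_j}^2 \to t$ upgrades this to $h(\norm{u}^2 + t)\, t \le S^{-2^\ast\!/2}\, t^{2^\ast\!/2}$.

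If $t > 0$, dividing by $t$ and then using the monotonicity of $h$ to bound $h(t) \le h(\norm{u}^2 + t)$ gives $h(t) \le S^{-2^\ast\!/2}\, t^{2^\ast\!/2-1}$, i.e., $t \in I$. For the second assertion, a bounded sequence $\seq{u_j}$ with $J'(u_j) \to 0$ admits a subsequence with $u_j \wto u$ and $\norm{u_j - u}^2 \to t$, and $I = \emptyset$ forces $t = 0$, which is strong convergence. There is no serious technical obstacle here — the proof is a structured combination of Brezis--Lieb and Sobolev — and the only genuinely structural point is that the monotonicity of $h$ is precisely what lets one replace $h(\norm{u}^2+t)$ by the smaller quantity $h(t)$, which is exactly what makes $I$ the correct obstruction set.
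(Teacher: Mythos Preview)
Your proposal is correct and follows essentially the same approach as the paper: test $J'(u_j)$ against $u_j$ and against $u$ (the paper does this by taking $v=u_j$ and $v=u$ in \eqref{2.19}), subtract, apply the Br\'ezis--Lieb lemma and the Sobolev inequality to obtain $t\,h(\norm{u}^2+t)\le S^{-2^\ast/2}\,t^{2^\ast/2}$, and then use the monotonicity of $h$ to conclude. The only cosmetic difference is that the paper writes $s:=\norm{u}^2+t$ and passes to a subsequence explicitly for the $L^p$ and a.e.\ convergence, which you use but do not name.
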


\begin{proof}
Since $J'(u_j) \to 0$,
\begin{equation} \label{2.19}
h\bigg(\int_\Omega |\nabla u_j|^2\, dx\bigg) \int_\Omega \nabla u_j \cdot \nabla v\, dx - \int_\Omega f(x,u_j)\, v\, dx - \int_\Omega |u_j|^{2^\ast-2}\, u_j\, v\, dx = \o(\norm{v})
\end{equation}
for all $v \in H^1_0(\Omega)$, and since $u_j \wto u$ and $\norm{u_j - u}^2 \to t$,
\[
\int_\Omega |\nabla u_j|^2\, dx \to \int_\Omega |\nabla u|^2\, dx + t =: s.
\]
Passing to a renamed subsequence, we may assume that $u_j \to u$ strongly in $L^p(\Omega)$ and a.e.\! in $\Omega$. So taking $v = u_j$ in \eqref{2.19} gives
\begin{equation} \label{2.4}
h(s) \left(\int_\Omega |\nabla u|^2\, dx + t\right) - \int_\Omega uf(x,u)\, dx - \int_\Omega |u_j|^{2^\ast} dx = \o(1),
\end{equation}
while taking $v = u$ and passing to the limit gives
\begin{equation} \label{2.5}
h(s) \int_\Omega |\nabla u|^2\, dx - \int_\Omega uf(x,u)\, dx - \int_\Omega |u|^{2^\ast} dx = 0.
\end{equation}
Since
\[
\int_\Omega |u_j|^{2^\ast} dx - \int_\Omega |u|^{2^\ast} dx = \int_\Omega |u_j - u|^{2^\ast} dx + \o(1)
\]
by the Br{\'e}zis-Lieb lemma (see \cite{MR699419}), subtracting \eqref{2.5} from \eqref{2.4} and using \eqref{1.7} gives
\[
th(s) = \int_\Omega |u_j - u|^{2^\ast} dx + \o(1) \le S^{-2^\ast\!/2} \left(\int_\Omega |\nabla (u_j - u)|^2\, dx\right)^{2^\ast\!/2} + \o(1).
\]
If $t > 0$, then passing to the limit and noting that $h(s) \ge h(t)$ since $s \ge t$ and $h$ is nondecreasing gives $h(t) \le S^{-2^\ast\!/2}\, t^{2^\ast\!/2-1}$, so $t \in I$.
\end{proof}

First we consider the case where $I$ is nonempty. Let
\[
H(t) = \int_0^t h(s)\, ds, \quad t \ge 0
\]
be the primitive of $h$, and set
\[
K(t) = \half\, H(t) - \frac{1}{2^\ast}\, th(t), \quad t \ge 0.
\]
For $1 \le \gamma \le 2^\ast/2$, let
\begin{equation} \label{1.6}
\lambda_1(\gamma) = \inf_{u \in H^1_0(\Omega) \setminus \set{0}}\, \frac{\left(\dint_\Omega |\nabla u|^2\, dx\right)^\gamma}{\dint_\Omega |u|^{2\gamma}\, dx}
\end{equation}
be the first eigenvalue of the nonlinear eigenvalue problem
\[
\left\{\begin{aligned}
- \bigg(\int_\Omega |\nabla u|^2\, dx\bigg)^{\gamma-1}\, \Delta u & = \lambda\, |u|^{2\gamma-2}\, u && \text{in } \Omega\\[5pt]
u & = 0 && \text{on } \bdry{\Omega},
\end{aligned}\right.
\]
which is positive by the Sobolev embedding theorem. We note that $\lambda_1(1) = \lambda_1$, the first Dirichlet eigenvalue of the Laplacian in $\Omega$, and $\lambda_1(2^\ast/2) = S^{2^\ast\!/2}$. We assume that
\begin{properties}{A}
\item \label{A1} for some constants $\alpha_1, \dots, \alpha_n > 0$, $1 \le \gamma_1 < \cdots < \gamma_n < 2^\ast/2$, and $\mu_1 \le \lambda_1(\gamma_1), \dots, \linebreak \mu_n \le \lambda_1(\gamma_n)$ with at least one of the inequalities strict,
    \[
    K(t) \ge \sum_{i=1}^n \alpha_i\, t^{\gamma_i} \quad \forall t \ge 0
    \]
    and
    \[
    F(x,t) - \frac{1}{2^\ast}\, tf(x,t) \le \sum_{i=1}^n \mu_i\, \alpha_i\, |t|^{2\gamma_i} \quad \text{for a.a.\! } x \in \Omega \text{ and all } t \in \R;
    \]
\item \label{A2} $K$ is superadditive, i.e.,
    \[
    K(t_1 + t_2) \ge K(t_1) + K(t_2) \quad \forall t_1, t_2 \ge 0;
    \]
\item \label{A3} $h(t)/t^{2^\ast\!/2-1}$ is strictly decreasing for $t > 0$ and
    \[
    0 \le b := \lim_{t \to + \infty}\, \frac{h(t)}{t^{2^\ast\!/2-1}} < S^{-2^\ast\!/2} < \lim_{t \to 0}\, \frac{h(t)}{t^{2^\ast\!/2-1}} \le + \infty.
    \]
\end{properties}
We note that $K$ is nonnegative by \ref{A1} and hence nondecreasing by \ref{A2}. We have the following theorem.

\begin{theorem} \label{Theorem 2.3}
Assume that $I \ne \emptyset$ and \eqref{1.2}, \ref{A1}, and \ref{A2} hold. Set
\begin{equation} \label{1.8}
c^\ast = \inf_{t \in I}\, K(t).
\end{equation}
Then $J$ satisfies the {\em \PS{c}} condition for all $c < c^\ast$. If, in addition, \ref{A3} holds, then the equation
\begin{equation} \label{1.9}
h(t) = S^{-2^\ast\!/2}\, t^{2^\ast\!/2-1}
\end{equation}
has a unique positive solution $t_0$ and
\begin{equation} \label{2.17}
c^\ast = K(t_0),
\end{equation}
in particular, $c^\ast > 0$.
\end{theorem}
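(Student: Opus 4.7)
The plan is to prove the two assertions in order. For the compactness claim, I take any $\PS{c}$ sequence $\seq{u_j}$ with $c < c^\ast$ and first establish boundedness in $H^1_0(\Omega)$. The working identity is
\[
J(u_j) - \frac{1}{2^\ast}\, J'(u_j) u_j = K\bigl(\norm{u_j}^2\bigr) - \int_\Omega \Big[F(x,u_j) - \frac{1}{2^\ast}\, u_j f(x,u_j)\Big] dx = c + \o(1) + \o(\norm{u_j}).
\]
Applying \ref{A1} to the $K$-term and using \eqref{1.6} to control each $L^{2\gamma_i}$-norm of $u_j$ by $\norm{u_j}^{2\gamma_i}/\lambda_1(\gamma_i)$, the left-hand side is bounded below by $\sum_{i=1}^n \alpha_i\bigl(1 - \mu_i/\lambda_1(\gamma_i)\bigr)\norm{u_j}^{2\gamma_i}$. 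All coefficients are nonnegative and at least one is strictly positive by the hypothesis in \ref{A1}, so the leading power dominates the $\o(\norm{u_j})$ on the right and forces boundedness.

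Next, I extract a subsequence with $u_j \wto u$ in $H^1_0(\Omega)$, $u_j \to u$ in $L^p(\Omega)$ for $p < 2^\ast$ and a.e., and $\norm{u_j - u}^2 \to t$. Proposition \ref{Proposition 2.2} gives $t = 0$ or $t \in I$. Suppose $t \in I$ toward a contradiction. Passing to the limit in the identity above, using continuity of $K$ together with \eqref{1.2} and $L^p$-convergence for the $F$-integral, yields
\[
c = K\bigl(\norm{u}^2 + t\bigr) - \int_\Omega \Big[F(x,u) - \frac{1}{2^\ast}\, u f(x,u)\Big] dx.
\]
Splitting $K(\norm{u}^2 + t) \ge K(\norm{u}^2) + K(t)$ via superadditivity \ref{A2} and reusing the lower-bound estimate from the previous paragraph on the $u$-piece shows that $K(\norm{u}^2)$ minus the integral is nonnegative, so $c \ge K(t) \ge c^\ast$, contradicting $c < c^\ast$. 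Hence $t = 0$ and $u_j \to u$ strongly.

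For the second assertion, under \ref{A3} the function $g(t) := h(t)/t^{2^\ast\!/2-1}$ is continuous and strictly decreasing on $(0,\infty)$ with $\lim_{t \to 0^+} g(t) > S^{-2^\ast\!/2} > b = \lim_{t \to +\infty} g(t)$, so the intermediate value theorem and strict monotonicity produce a unique $t_0 > 0$ solving \eqref{1.9}, and the same monotonicity forces $I = [t_0, \infty)$. Since $K$ is nondecreasing on $[0,\infty)$ (as noted immediately before the theorem, using \ref{A1} and \ref{A2}), I conclude $c^\ast = \inf_{t \ge t_0} K(t) = K(t_0) \ge \alpha_1 t_0^{\gamma_1} > 0$.

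The main technical hurdle is the compactness step: because $K$ is nonlinear in the full squared norm $\norm{u_j}^2$, the natural Brézis--Lieb splitting does not separate the weak limit $u$ from the concentration mass $t$ automatically, and it is precisely superadditivity \ref{A2} that allows me to extract a free $K(t)$ to pit against the variational threshold $c^\ast$. The strict inequality among the $\mu_i \le \lambda_1(\gamma_i)$ in \ref{A1} is equally crucial — it is what keeps the residual $u$-contribution nonnegative so that nothing cancels the $K(t)$ term that drives the contradiction.
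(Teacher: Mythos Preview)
Your proof is correct and follows essentially the same route as the paper: boundedness via the identity $J(u_j)-\tfrac{1}{2^\ast}J'(u_j)u_j$ combined with \ref{A1} and \eqref{1.6}, Proposition~\ref{Proposition 2.2} to place the concentration mass $t$ in $I$, superadditivity \ref{A2} to split $K(\norm{u}^2+t)\ge K(\norm{u}^2)+K(t)$, and the \ref{A1} lower bound again to make the $u$-piece nonnegative. The only cosmetic difference is that you pass to the limit directly in the combined identity (where the $|u_j|^{2^\ast}$ terms have already cancelled), whereas the paper tracks \eqref{2.1} and \eqref{2.20} separately before subtracting; your version is slightly more economical but the argument is the same.
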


\begin{proof}
First we note that for all $u \in H^1_0(\Omega)$,
\begin{equation} \label{2.18}
K\bigg(\int_\Omega |\nabla u|^2\, dx\bigg) - \int_\Omega \left[F(x,u) - \frac{1}{2^\ast}\, uf(x,u)\right] dx \ge \sum_{i=1}^n \alpha_i \left(1 - \frac{\mu_i}{\lambda_1(\gamma_i)}\right) \left(\int_\Omega |\nabla u|^2\, dx\right)^{\gamma_i}
\end{equation}
by \ref{A1} and \eqref{1.6}.

Let $c < c^\ast$ and let $\seq{u_j}$ be a \PS{c} sequence. Then
\begin{equation} \label{2.1}
\half\, H\bigg(\int_\Omega |\nabla u_j|^2\, dx\bigg) - \int_\Omega F(x,u_j)\, dx - \frac{1}{2^\ast} \int_\Omega |u_j|^{2^\ast} dx = c + \o(1)
\end{equation}
and
\begin{equation} \label{2.21}
h\bigg(\int_\Omega |\nabla u_j|^2\, dx\bigg) \int_\Omega \nabla u_j \cdot \nabla v\, dx - \int_\Omega f(x,u_j)\, v\, dx - \int_\Omega |u_j|^{2^\ast-2}\, u_j\, v\, dx = \o(\norm{v})
\end{equation}
for all $v \in H^1_0(\Omega)$. Taking $v = u_j$ in \eqref{2.21}, dividing by $2^\ast$, and subtracting from \eqref{2.1} gives
\[
K\bigg(\int_\Omega |\nabla u_j|^2\, dx\bigg) - \int_\Omega \left[F(x,u_j) - \frac{1}{2^\ast}\, u_j\, f(x,u_j)\right] dx = c + \o(1 + \norm{u_j}),
\]
which together with \eqref{2.18} implies that $\seq{u_j}$ is bounded in $H^1_0(\Omega)$. So a renamed subsequence of $\seq{u_j}$ converges to some $u$ weakly in $H^1_0(\Omega)$, strongly in $L^p(\Omega)$, and a.e.\! in $\Omega$. For a further subsequence, $\norm{u_j - u}^2$ converges to some $t \ge 0$. We will show that $t = 0$.

Suppose $t \ne 0$. Then $t \in I$ by Proposition \ref{Proposition 2.2} and hence
\begin{equation} \label{2.6}
K(t) \ge c^\ast
\end{equation}
by \eqref{1.8}. As in the proof of Proposition \ref{Proposition 2.2},
\[
\int_\Omega |\nabla u_j|^2\, dx \to \int_\Omega |\nabla u|^2\, dx + t =: s
\]
and
\begin{equation} \label{2.20}
sh(s) - \int_\Omega uf(x,u)\, dx - \int_\Omega |u_j|^{2^\ast} dx = \o(1).
\end{equation}
Moreover, passing to the limit in \eqref{2.1} gives
\[
\half\, H(s) - \int_\Omega F(x,u)\, dx - \frac{1}{2^\ast} \int_\Omega |u_j|^{2^\ast} dx = c + \o(1),
\]
and combining this with \eqref{2.20} gives
\[
c = K(s) - \int_\Omega \left[F(x,u) - \frac{1}{2^\ast}\, uf(x,u)\right] dx.
\]
Since
\[
K(s) \ge K(t) + K\bigg(\int_\Omega |\nabla u|^2\, dx\bigg)
\]
by \ref{A2} and
\[
K\bigg(\int_\Omega |\nabla u|^2\, dx\bigg) - \int_\Omega \left[F(x,u) - \frac{1}{2^\ast}\, uf(x,u)\right] dx \ge 0
\]
by \eqref{2.18}, then
\[
c \ge K(t).
\]
This together with \eqref{2.6} gives $c \ge c^\ast$, contrary to assumption. So $t = 0$.

If \ref{A3} holds, then the equation $h(t)/t^{2^\ast\!/2-1} = S^{-2^\ast\!/2}$ has a unique positive solution $t_0$ and $I = [t_0,\infty)$. Since $K$ is nondecreasing, then $c^\ast = K(t_0)$, in particular, \ref{A1} implies that $c^\ast > 0$.
\end{proof}

Next we consider the case where $I$ is empty. We assume that
\begin{properties}{A}
\setcounter{enumi}{3}
\item \label{A4} $h$ satisfies one of the following conditions:
    \begin{enumerate}
    \item[$(i)$] for some constants $\eta > 0$ and $p/2 < \gamma < 2^\ast/2$,
        \[
        h(t) \ge S^{-2^\ast\!/2}\, t^{2^\ast\!/2-1} + \eta\, t^{\gamma-1} \quad \forall t \ge 0;
        \]
    \item[$(ii)$] for some constant $b > S^{-2^\ast\!/2}$,
        \[
        h(t) \ge bt^{2^\ast\!/2-1} \quad \forall t \ge 0;
        \]
    \item[$(iii)$] for some constants $b > 0$ and $\gamma > 2^\ast/2$,
        \[
        h(t) > \max \set{S^{-2^\ast\!/2}\, t^{2^\ast\!/2-1},bt^{\gamma-1}} \quad \forall t > 0.
        \]
    \end{enumerate}
\end{properties}
We have the following theorem.

\begin{theorem} \label{Theorem 2.4}
Assume that \eqref{1.2} and \ref{A4} hold. Then $J$ is bounded from below, coercive, and satisfies the {\em \PS{c}} condition for all $c \in \R$. In particular, $J$ has a global minimizer.
\end{theorem}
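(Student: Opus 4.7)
The plan is to reduce the theorem to two ingredients: (a) under each of the three cases of \ref{A4}, the functional $J$ is bounded from below and coercive on $H^1_0(\Omega)$, and (b) since each case of \ref{A4} forces $I = \emptyset$, Proposition \ref{Proposition 2.2} handles the compactness of bounded sequences automatically. Together these yield \PS{c} for every $c \in \R$, and combining coercivity, boundedness below, and Ekeland's variational principle then produces a global minimizer.

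The heart of the argument is the coercivity estimate, which I would split case-by-case. Writing $t = \norm{u}^2 = \int_\Omega |\nabla u|^2\,dx$ and recalling $H(t) = \int_0^t h(s)\,ds$, the two standing bounds are $\int_\Omega |u|^{2^\ast}\,dx \le S^{-2^\ast\!/2}\, t^{2^\ast\!/2}$ from \eqref{1.7} and $\int_\Omega F(x,u)\,dx \le C_1\,\norm{u}^p + C_2\,\norm{u}$ from \eqref{1.2} and the Sobolev embedding (since $p < 2^\ast$). In case $(i)$, integrating the pointwise bound on $h$ gives $\frac{1}{2}H(t) \ge \frac{1}{2^\ast}\, S^{-2^\ast\!/2}\, t^{2^\ast\!/2} + \frac{\eta}{2\gamma}\, t^\gamma$, whose leading Sobolev contribution exactly cancels the critical term in $J$, leaving a dominant $\frac{\eta}{2\gamma}\, \norm{u}^{2\gamma}$ that beats $\norm{u}^p$ because $p < 2\gamma$ by the hypothesis $p/2 < \gamma$. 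In case $(ii)$, $\frac{1}{2}H(t) \ge \frac{b}{2^\ast}\, t^{2^\ast\!/2}$, so subtracting the critical integral leaves $\frac{b - S^{-2^\ast\!/2}}{2^\ast}\, \norm{u}^{2^\ast}$ with a strictly positive coefficient, which dominates $\norm{u}^p$ since $p < 2^\ast$. In case $(iii)$, the lower bound $h(s) \ge b s^{\gamma-1}$ yields $\frac{1}{2}H(t) \ge \frac{b}{2\gamma}\, t^\gamma$ with $2\gamma > 2^\ast > p$, so $\norm{u}^{2\gamma}$ absorbs both the critical and subcritical terms. In all three cases $J(u) \to +\infty$ as $\norm{u} \to \infty$, giving both coercivity and a uniform lower bound.

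Once coercivity is in place, the compactness assertion is immediate: any \PS{c} sequence $\seq{u_j}$ satisfies $J(u_j) \to c$, hence $\seq{u_j}$ is bounded in $H^1_0(\Omega)$; since each condition in \ref{A4} forces $h(t) > S^{-2^\ast\!/2}\, t^{2^\ast\!/2-1}$ for every $t > 0$, the set $I$ is empty, and the second assertion of Proposition \ref{Proposition 2.2} produces a strongly convergent subsequence. For the global minimizer, I would fix a minimizing sequence for $m := \inf_{H^1_0(\Omega)} J \in \R$ and apply Ekeland's variational principle to obtain a \PS{m} sequence; the compactness just established then delivers a strong limit that realizes the infimum. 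Note that the direct method alone will not suffice here, because the critical term $-\frac{1}{2^\ast}\int_\Omega |u|^{2^\ast}\,dx$ is not weakly lower semicontinuous.

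The only real subtlety is aligning the exponent conditions in \ref{A4} with the correct dominant term: in $(i)$ the hypothesis $p/2 < \gamma$ is used precisely to control the subcritical integral by $\norm{u}^{2\gamma}$ after the Sobolev cancellation, while in $(ii)$ and $(iii)$ the strict inequalities $b > S^{-2^\ast\!/2}$ and $h(t) > S^{-2^\ast\!/2}\, t^{2^\ast\!/2-1}$ are what produce a strictly positive coefficient in front of the surviving dominant power. Otherwise the proof is essentially a bookkeeping exercise, because Proposition \ref{Proposition 2.2} already encapsulates all of the weak-convergence analysis that would normally be the delicate part of such an argument.
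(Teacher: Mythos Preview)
Your proposal is correct and follows essentially the same route as the paper's proof: bound $\int_\Omega F(x,u)\,dx$ via \eqref{1.2} and $\int_\Omega |u|^{2^\ast}dx$ via \eqref{1.7}, integrate the pointwise lower bounds on $h$ in each case of \ref{A4} to obtain corresponding lower bounds on $H$, and read off coercivity and boundedness below; then observe that \ref{A4} forces $I = \emptyset$ and invoke Proposition \ref{Proposition 2.2} on any (necessarily bounded) \PS{c} sequence. Your explicit use of Ekeland's principle for the minimizer and your remark that weak lower semicontinuity fails are additions the paper leaves implicit, but the underlying argument is the same.
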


\begin{proof}
By \eqref{1.2} and \eqref{1.7},
\[
J(u) \ge \half\, H\bigg(\int_\Omega |\nabla u|^2\, dx\bigg) - c_3 \int_\Omega |u|^p\, dx - c_4 - \frac{1}{2^\ast}\, S^{-2^\ast\!/2} \left(\int_\Omega |\nabla u|^2\, dx\right)^{2^\ast\!/2}
\]
for some constants $c_3, c_4 > 0$. By \ref{A4}, $H$ satisfies one of the following:
\begin{enumroman}
\item $H(t) \ge \dfrac{2}{2^\ast}\, S^{-2^\ast\!/2}\, t^{2^\ast\!/2} + \dfrac{\eta}{\gamma}\, t^\gamma$ for all $t \ge 0$, where $\eta > 0$ and $p/2 < \gamma < 2^\ast/2$;
\item $H(t) \ge \dfrac{2b}{2^\ast}\, t^{2^\ast\!/2}$ for all $t \ge 0$, where $b > S^{-2^\ast\!/2}$;
\item $H(t) \ge \dfrac{b}{\gamma}\, t^\gamma$ for all $t \ge 0$, where $b > 0$ and $\gamma > 2^\ast/2$.
\end{enumroman}
It follows that $J$ is bounded from below and coercive.

Let $c \in \R$ and let $\seq{u_j}$ be a \PS{c} sequence. By coercivity, $\seq{u_j}$ is bounded. By \ref{A4}, $h(t) > S^{-2^\ast\!/2}\, t^{2^\ast\!/2-1}$ for all $t > 0$, so $I = \emptyset$. So $\seq{u_j}$ has a strongly convergent subsequence by Proposition \ref{Proposition 2.2}.
\end{proof}

Finally we apply our results to the model problem
\begin{equation} \label{1.11}
\left\{\begin{aligned}
- \left[a + b \left(\int_\Omega |\nabla u|^2\, dx\right)^{\gamma-1}\right] \Delta u & = f(x,u) + |u|^{2^\ast-2}\, u && \text{in } \Omega\\[5pt]
u & = 0 && \text{on } \bdry{\Omega},
\end{aligned}\right.
\end{equation}
where $a, b \ge 0$ with $a + b > 0$ and $1 < \gamma < + \infty$. Here
\[
h(t) = a + bt^{\gamma-1}, \qquad H(t) = at + \frac{b}{\gamma}\, t^\gamma, \qquad K(t) = \frac{1}{N}\, at + \left(\frac{1}{2 \gamma} - \frac{1}{2^\ast}\right) bt^\gamma
\]
and
\[
J(u) = \frac{a}{2} \int_\Omega |\nabla u|^2\, dx + \frac{b}{2 \gamma} \left(\int_\Omega |\nabla u|^2\, dx\right)^\gamma - \int_\Omega F(x,u)\, dx - \frac{1}{2^\ast} \int_\Omega |u|^{2^\ast} dx, \quad u \in H^1_0(\Omega).
\]

Theorem \ref{Theorem 2.3} has the following corollary for the case $\gamma < 2^\ast/2$.

\begin{corollary} \label{Corollary 1.6}
Let $1 < \gamma < 2^\ast/2$ and $a, b \ge 0$. Assume that $f$ satisfies \eqref{1.2} and
\[
F(x,t) - \frac{1}{2^\ast}\, tf(x,t) \le \frac{1}{N}\, \lambda at^2 + \left(\frac{1}{2 \gamma} - \frac{1}{2^\ast}\right) \mu b\, |t|^{2\gamma} \quad \text{for a.a.\! } x \in \Omega \text{ and all } t \in \R
\]
for some constants $\lambda \le \lambda_1$ and $\mu \le \lambda_1(\gamma)$ with either $a > 0$ and $\lambda < \lambda_1$, or $b > 0$ and $\mu < \lambda_1(\gamma)$. Let $t_0$ be the unique positive solution of the equation
\[
a + bt^{\gamma-1} = S^{-2^\ast\!/2}\, t^{2^\ast\!/2-1}
\]
and set
\[
c^\ast = \frac{1}{N}\, at_0 + \left(\frac{1}{2 \gamma} - \frac{1}{2^\ast}\right) bt_0^\gamma.
\]
Then $J$ satisfies the {\em \PS{c}} condition for all $c < c^\ast$.
\end{corollary}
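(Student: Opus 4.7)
The plan is to apply Theorem \ref{Theorem 2.3} by verifying its hypotheses for the model nonlocality $h(t) = a + bt^{\gamma-1}$. First, I would compute the relevant primitives explicitly: $H(t) = at + (b/\gamma)\, t^\gamma$ and, using $\tfrac{1}{2} - \tfrac{1}{2^\ast} = \tfrac{1}{N}$, $K(t) = \tfrac{1}{N}\, at + \bigl(\tfrac{1}{2\gamma} - \tfrac{1}{2^\ast}\bigr)\, bt^\gamma$. To match assumption \ref{A1}, I would take $n = 2$ with $\gamma_1 = 1$, $\gamma_2 = \gamma$, $\alpha_1 = a/N$, $\alpha_2 = \bigl(\tfrac{1}{2\gamma} - \tfrac{1}{2^\ast}\bigr) b$, $\mu_1 = \lambda$, $\mu_2 = \mu$ (dropping the $i=1$ term if $a=0$ and the $i=2$ term if $b=0$). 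Then the lower bound on $K$ holds as equality, the upper bound on $F(x,t) - \tfrac{1}{2^\ast}\, tf(x,t)$ is precisely the hypothesis, and recalling $\lambda_1(1) = \lambda_1$, at least one strict inequality $\mu_i < \lambda_1(\gamma_i)$ is guaranteed by assumption.

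Next I would verify \ref{A2}. Since $K(t) = \alpha_1 t + \alpha_2 t^\gamma$ with $\alpha_1, \alpha_2 \ge 0$ and $\gamma \ge 1$, superadditivity reduces to the elementary inequality $(t_1+t_2)^\gamma \ge t_1^\gamma + t_2^\gamma$ for $t_1, t_2 \ge 0$, which holds whenever $\gamma \ge 1$. For \ref{A3}, I would study $g(t) := h(t)/t^{2^\ast\!/2-1} = a\, t^{1-2^\ast\!/2} + b\, t^{\gamma - 2^\ast\!/2}$; both exponents are strictly negative because $\gamma < 2^\ast/2$ and $2^\ast/2 > 1$, so $g$ is strictly decreasing on $(0,\infty)$ with $\lim_{t \to 0^+} g(t) = +\infty$ (since at least one of $a, b$ is positive) and $\lim_{t \to \infty} g(t) = 0$. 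In particular $b = 0$ in the notation of \ref{A3}, so the required pinching $0 \le b < S^{-2^\ast\!/2} < +\infty$ holds, and $g$ crosses the value $S^{-2^\ast\!/2}$ at exactly one point $t_0 > 0$, which gives $I = [t_0, \infty) \ne \emptyset$ and the uniqueness claim for the equation $a + bt^{\gamma-1} = S^{-2^\ast\!/2}\, t^{2^\ast\!/2-1}$.

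With \ref{A1}, \ref{A2}, \ref{A3} and $I \ne \emptyset$ all verified, Theorem \ref{Theorem 2.3} applies directly, yielding $c^\ast = K(t_0)$, which is exactly the stated formula, and the \PS{c} condition below $c^\ast$. Essentially no step is a true obstacle: the whole argument is a bookkeeping verification. The only point requiring a little care is keeping track of the degenerate cases $a = 0$ and $b = 0$ so that the sum in \ref{A1} and the limits in \ref{A3} remain well-defined; in each such case exactly one of the two terms drops out, but the remaining structural hypotheses (strict monotonicity of $g$, strict inequality $\mu_i < \lambda_1(\gamma_i)$ for the surviving index) are precisely what the statement's case split on $a$ and $b$ is designed to preserve.
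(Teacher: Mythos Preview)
Your proposal is correct and is exactly the approach the paper intends: the corollary is stated without proof, immediately after the explicit formulas for $H$, $K$, and $J$ in the model case $h(t) = a + bt^{\gamma-1}$, as a direct consequence of Theorem~\ref{Theorem 2.3}. Your verification of \ref{A1}--\ref{A3}, including the handling of the degenerate cases $a = 0$ or $b = 0$, is precisely the routine check the paper leaves to the reader.
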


\begin{remark}
For $a = 1$ and $b = 0$, Corollary \ref{Corollary 1.6} gives the well-known compactness threshold
\[
c^\ast = \frac{1}{N}\, S^{N/2}
\]
in the Br{\'e}zis-Nirenberg problem (see \cite{MR709644}).
\end{remark}

\begin{remark}
An interesting special case of problem \eqref{1.11} is
\[
\left\{\begin{aligned}
- \bigg(\int_\Omega |\nabla u|^2\, dx\bigg)^{\gamma-1}\, \Delta u & = \mu\, |u|^{2\gamma-2}\, u + |u|^{2^\ast-2}\, u && \text{in } \Omega\\[5pt]
u & = 0 && \text{on } \bdry{\Omega},
\end{aligned}\right.
\]
where $\gamma > 1$ and $\mu > 0$. For $\gamma < 2^\ast/2$ and $\mu < \lambda_1(\gamma)$, Corollary \ref{Corollary 1.6} gives the compactness threshold
\[
c^\ast = \left(\frac{1}{2 \gamma} - \frac{1}{2^\ast}\right) S^{2^\ast \gamma/(2^\ast - 2 \gamma)}
\]
for the associated variational functional
\[
J(u) = \frac{1}{2 \gamma} \left(\int_\Omega |\nabla u|^2\, dx\right)^\gamma - \frac{\mu}{2 \gamma} \int_\Omega |u|^{2 \gamma}\, dx - \frac{1}{2^\ast} \int_\Omega |u|^{2^\ast} dx, \quad u \in H^1_0(\Omega).
\]
\end{remark}

\begin{remark}
The classical case $h(t) = a + bt$ when $N = 3$ is one of the few cases with both $a$ and $b$ positive and $\gamma < 2^\ast/2$ where $c^\ast$ can be found in closed form. Here Corollary \ref{Corollary 1.6} gives
\[
c^\ast = \frac{1}{4}\, abS^3 + \frac{1}{24}\, b^3 S^6 + \frac{1}{24} \left(4aS + b^2 S^4\right)^{3/2}.
\]
This threshold level was also obtained in Naimen \cite[Lemma 2.5]{MR3278854} using concentration compactness arguments. Our approach here is simpler.
\end{remark}

Theorem \ref{Theorem 2.3} also has the following corollary for the case $\gamma = 2^\ast/2$.

\begin{corollary} \label{Corollary 1.7}
Let $\gamma = 2^\ast/2$, $a > 0$, and $0 < b < S^{-2^\ast\!/2}$. Assume that $f$ satisfies \eqref{1.2} and
\[
F(x,t) - \frac{1}{2^\ast}\, tf(x,t) \le \frac{1}{N}\, \lambda at^2 \quad \text{for a.a.\! } x \in \Omega \text{ and all } t \in \R
\]
for some constant $\lambda < \lambda_1$. Set
\[
c^\ast = \frac{1}{N} \left(\frac{a^{2^\ast\!/2}}{S^{-2^\ast\!/2} - b}\right)^{2/(2^\ast-2)}.
\]
Then $J$ satisfies the {\em \PS{c}} condition for all $c < c^\ast$.
\end{corollary}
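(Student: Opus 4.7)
The plan is to apply Theorem \ref{Theorem 2.3} with $h(t) = a + bt^{\gamma-1}$ and $\gamma = 2^\ast/2$; the task then reduces to verifying \ref{A1}--\ref{A3} in this model setting and computing $t_0$ and $K(t_0)$ in closed form. First I would compute
\[
H(t) = at + \frac{b}{\gamma}\, t^\gamma, \qquad K(t) = \half\, H(t) - \frac{1}{2^\ast}\, th(t) = \left(\half - \frac{1}{2^\ast}\right) at + \left(\frac{1}{2\gamma} - \frac{1}{2^\ast}\right) bt^\gamma = \frac{a}{N}\, t,
\]
the $t^\gamma$ contribution vanishing because $2\gamma = 2^\ast$. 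In particular $K$ is linear, so \ref{A2} holds with equality, and \ref{A1} is witnessed by the single-term choice $n=1$, $\alpha_1 = a/N$, $\gamma_1 = 1$, $\mu_1 = \lambda$: the bound $K(t) \ge \alpha_1 t$ is an identity, the control on $F(x,t) - \tfrac{1}{2^\ast}\, tf(x,t)$ is exactly the hypothesis, and $\mu_1 = \lambda < \lambda_1 = \lambda_1(1)$ provides the required strict inequality.

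Next I would verify \ref{A3}. The ratio $h(t)/t^{\gamma-1} = a/t^{\gamma-1} + b$ is strictly decreasing on $(0,\infty)$ since $a>0$ and $\gamma>1$, with limits $+\infty$ as $t \to 0^+$ and $b$ as $t \to + \infty$; the hypothesis $0 < b < S^{-2^\ast\!/2}$ places $S^{-2^\ast\!/2}$ strictly between these limits. Solving $a + bt_0^{\gamma-1} = S^{-2^\ast\!/2}\, t_0^{\gamma-1}$ for the unique positive root gives
\[
t_0 = \left(\frac{a}{S^{-2^\ast\!/2} - b}\right)^{1/(\gamma-1)},
\]
so in particular $I = [t_0, \infty) \neq \emptyset$. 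Using the identities $1/(\gamma - 1) = 2/(2^\ast - 2)$ and $a \cdot a^{2/(2^\ast - 2)} = a^{2^\ast/(2^\ast - 2)} = \bigl(a^{2^\ast\!/2}\bigr)^{2/(2^\ast - 2)}$, I then obtain
\[
c^\ast = K(t_0) = \frac{a}{N}\, t_0 = \frac{1}{N}\left(\frac{a^{2^\ast\!/2}}{S^{-2^\ast\!/2} - b}\right)^{2/(2^\ast - 2)},
\]
and Theorem \ref{Theorem 2.3} delivers the \PS{c} conclusion for all $c < c^\ast$.

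There is essentially no conceptual obstacle; the only subtlety worth flagging is that the borderline exponent $\gamma = 2^\ast/2$ collapses the $t^\gamma$ piece of $K$ to zero, which both trivializes \ref{A1}--\ref{A2} (a single linear term, driven entirely by the subcritical-$\lambda$ data) and explains why the threshold depends only on $a$ and on the ``deficit'' $S^{-2^\ast\!/2} - b$. The one place a computational misstep is most likely is the exponent arithmetic that rewrites $\bigl(a/(S^{-2^\ast\!/2} - b)\bigr)^{1/(\gamma-1)}$ in the stated form, but the identity $1/(\gamma - 1) = 2/(2^\ast - 2)$ makes that immediate.
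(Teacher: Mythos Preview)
Your proposal is correct and is exactly the approach the paper intends: Corollary \ref{Corollary 1.7} is stated without proof as a direct consequence of Theorem \ref{Theorem 2.3}, and your verification of \ref{A1}--\ref{A3} together with the closed-form computation of $t_0$ and $K(t_0)$ is precisely what that application requires. The key observation you highlight---that the $t^\gamma$ term in $K$ vanishes when $\gamma = 2^\ast/2$, leaving $K(t) = \frac{a}{N}\,t$---is what makes both \ref{A1} and \ref{A2} immediate and yields the clean formula for $c^\ast$.
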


\begin{remark}
For the classical case $h(t) = a + bt$ with $N = 4$, $a > 0$, and $0 < b < S^{-2}$, Corollary \ref{Corollary 1.7} gives
\[
c^\ast = \frac{a^2}{4\, (S^{-2} - b)}.
\]
This threshold level was also obtained in Naimen \cite[Lemma 2.1]{MR3210026} by analyzing the behavior of Palais-Smale sequences. Our approach here is simpler again.
\end{remark}

Theorem \ref{Theorem 2.4} has the following corollary for $\gamma \ge 2^\ast/2$.

\begin{corollary} \label{Corollary 2.11}
Assume that $f$ satisfies \eqref{1.2}. Then $J$ is bounded from below, satisfies the {\em \PS{c}} condition for all $c \in \R$, and has a global minimizer in each of the following cases:
\begin{enumroman}
\item \label{Corollary 2.11.i} $\gamma = 2^\ast/2$, $a \ge 0$, and $b > S^{-2^\ast\!/2}$;
\item \label{Corollary 2.11.ii} $\gamma > 2^\ast/2$ and
    \[
    a^{\gamma-2^\ast\!/2}\, b^{2^\ast\!/2-1} > \frac{(\gamma - 2^\ast/2)^{\gamma-2^\ast\!/2}\, (2^\ast/2 - 1)^{2^\ast\!/2-1}}{(\gamma - 1)^{\gamma-1}}\, S^{-(2^\ast\!/2)(\gamma-1)}.
    \]
\end{enumroman}
\end{corollary}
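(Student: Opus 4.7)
The plan is to invoke Theorem \ref{Theorem 2.4} by verifying that the nonlocal term $h(t) = a + bt^{\gamma-1}$ satisfies one of the three conditions in \ref{A4} in each of the two cases.

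For case \ref{Corollary 2.11.i}, since $a \ge 0$ and $\gamma = 2^\ast/2$, we have $h(t) \ge bt^{2^\ast\!/2-1}$ with $b > S^{-2^\ast\!/2}$, which is precisely condition $(ii)$ of \ref{A4}. Theorem \ref{Theorem 2.4} then immediately yields all three conclusions.

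For case \ref{Corollary 2.11.ii}, I aim to verify condition $(iii)$ of \ref{A4}, namely $h(t) > \max\set{S^{-2^\ast\!/2}\, t^{2^\ast\!/2-1},\, bt^{\gamma-1}}$ for all $t > 0$. The hypothesis forces $a, b > 0$ (its right-hand side is strictly positive), so the bound $h(t) > bt^{\gamma-1}$ is automatic. The substantive step is to show $a > g(t)$ for all $t > 0$, where
\[
g(t) = S^{-2^\ast\!/2}\, t^{2^\ast\!/2-1} - bt^{\gamma-1}.
\]
Because $\gamma > 2^\ast/2$, one checks that $g(t) \to 0^+$ as $t \to 0^+$ and $g(t) \to -\infty$ as $t \to + \infty$, and that $g$ attains its global maximum at the unique positive critical point
\[
t_\ast = \left(\frac{(2^\ast/2 - 1)\, S^{-2^\ast\!/2}}{(\gamma - 1)\, b}\right)^{1/(\gamma - 2^\ast\!/2)}.
\]
Using the identity $bt_\ast^{\gamma - 2^\ast\!/2} = (2^\ast/2-1)\, S^{-2^\ast\!/2}/(\gamma-1)$ at $t_\ast$, a direct simplification gives
\[
g(t_\ast)^{\gamma - 2^\ast\!/2}\, b^{2^\ast\!/2-1} = \frac{(\gamma - 2^\ast/2)^{\gamma - 2^\ast\!/2}\, (2^\ast/2 - 1)^{2^\ast\!/2 - 1}}{(\gamma - 1)^{\gamma-1}}\, S^{-(2^\ast\!/2)(\gamma-1)},
\]
so the assumed inequality is equivalent to $a > g(t_\ast) = \sup_{t > 0} g(t)$. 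This establishes condition $(iii)$ of \ref{A4}, and Theorem \ref{Theorem 2.4} again applies.

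The main obstacle is just the exponent bookkeeping in the last display, where one must carefully verify that the numerical constants in the hypothesis match the maximum value of $g$. Conceptually, the condition $\gamma > 2^\ast/2$ is precisely what makes $g$ have a finite positive supremum, so that $a > \sup_{t>0} g(t)$ becomes a genuine constraint on $a$ and $b$ rather than a triviality.
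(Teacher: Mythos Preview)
Your proof is correct and takes essentially the same approach as the paper's. The paper's proof is a single sentence observing that the minimum of $a + bt^{\gamma-1} - S^{-2^\ast\!/2}\, t^{2^\ast\!/2-1}$ for $t > 0$ is positive if and only if the inequality in \ref{Corollary 2.11.ii} holds; your version simply spells out this elementary computation (via $g(t) = -\phi(t) + a$) and makes explicit the trivial verification for case \ref{Corollary 2.11.i}.
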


\begin{proof}
The minimum of $a + bt^{\gamma-1} - S^{-2^\ast\!/2}\, t^{2^\ast\!/2-1},\, t > 0$ is positive if and only if the last inequality holds.
\end{proof}

\begin{remark}
For the classical case $h(t) = a + bt$ with $N = 4$, $a \ge 0$, and $b > S^{-2}$, Corollary \ref{Corollary 2.11} \ref{Corollary 2.11.i} implies that $J$ satisfies the \PS{c} condition for all $c \in \R$. This was also observed in Liao et al.\! \cite[Proposition 2.1]{MR3861499}.
\end{remark}

\begin{remark}
For the classical case $h(t) = a + bt$ with $N \ge 5$, Corollary \ref{Corollary 2.11} \ref{Corollary 2.11.ii} implies that $J$ satisfies the \PS{c} condition for all $c \in \R$ if
\[
a^{N-4}\, b^2 > \frac{4\, (N - 4)^{N-4}}{(N - 2)^{N-2}}\, S^{-N}.
\]
\end{remark}

\section{Existence and multiplicity results} \label{Section 3}

In the case where $I$ is nonempty, our main existence result for problem \eqref{1.1} is the following theorem.

\begin{theorem} \label{Theorem 1.10}
Assume \eqref{1.2} and \ref{A1}--\ref{A3}. Assume further that
\begin{equation} \label{1.12}
H(t) \ge b_0 t^{\gamma_0} \quad \text{for } 0 \le t \le \delta
\end{equation}
for some constants $\delta, b_0 > 0$ and $1 \le \gamma_0 < 2^\ast/2$,
\begin{equation} \label{1.13}
F(x,t) \le \half\, \mu_0 b_0\, |t|^{2\gamma_0} \quad \text{for a.a.\! } x \in \Omega \text{ and } |t| \le \delta
\end{equation}
for some $\mu_0 < \lambda_1(\gamma_0)$, and
\begin{equation} \label{1.14}
F(x,t) \ge \frac{1}{q}\, \nu t^q \quad \text{for a.a.\! } x \in B_r(x_0) \text{ and all } t \ge 0
\end{equation}
for some ball $B_r(x_0) \subset \Omega$, $\nu > 0$, and $2 \gamma_0 \le q \le 2 \gamma_n$. Then problem \eqref{1.1} has a nontrivial solution in each of the following cases:
\begin{enumroman}
\item $N = 3$ and $q > 4$,
\item $N \ge 4$ and $q \ge N/(N - 2)$.
\end{enumroman}
\end{theorem}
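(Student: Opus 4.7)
The plan is to apply the mountain pass theorem to $J$ and to show that the resulting critical level lies strictly below the compactness threshold $c^\ast=K(t_0)$ furnished by Theorem \ref{Theorem 2.3}; strong convergence of a \PS{c}-sequence then follows from that theorem.

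The mountain pass geometry is standard: splitting $\{|u|\le\delta\}$ and $\{|u|>\delta\}$ and using \eqref{1.13} on the first piece together with \eqref{1.2} on the second, then combining with \eqref{1.12} and the characterization \eqref{1.6} of $\lambda_1(\gamma_0)$, yields for $\norm{u}$ small
\[
J(u) \ge \frac{b_0}{2}\bigg(1-\frac{\mu_0}{\lambda_1(\gamma_0)}\bigg)\norm{u}^{2\gamma_0} - C\norm{u}^{2^\ast},
\]
which is $\ge\alpha>0$ on a small sphere $\norm{u}=\rho$ since $\mu_0<\lambda_1(\gamma_0)$ and $2\gamma_0<2^\ast$. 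For the far end, \ref{A3} gives $H(s)\le\frac{2(b+\eta)}{2^\ast}s^{2^\ast\!/2}$ for large $s$ with $b+\eta<S^{-2^\ast\!/2}$, so for a nonnegative $\varphi\in C_c^\infty(\Omega)$ whose Sobolev quotient is close enough to $S$, $J(t\varphi)\to-\infty$ as $t\to\infty$.

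The crux is producing an admissible path with maximum below $c^\ast$. I would use the standard truncated Aubin--Talenti functions
\[
u_\varepsilon(x)=\phi(x)\,\frac{[N(N-2)\varepsilon^2]^{(N-2)/4}}{(\varepsilon^2+|x-x_0|^2)^{(N-2)/2}},
\]
with $\phi\in C_c^\infty(B_r(x_0))$ equal to $1$ near $x_0$, so that $u_\varepsilon\ge 0$ is supported in $B_r(x_0)$ and the classical estimates
\[
\norm{u_\varepsilon}^2=S^{N/2}+\O(\varepsilon^{N-2}),\qquad \int_\Omega u_\varepsilon^{2^\ast}\,dx=S^{N/2}+\O(\varepsilon^N),\qquad \int_\Omega u_\varepsilon^q\,dx\asymp\varepsilon^{N-q(N-2)/2}
\]
(with a $|\log\varepsilon|$ factor at $q=N/(N-2)$) hold. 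Because $\operatorname{supp} u_\varepsilon\subset B_r(x_0)$, \eqref{1.14} gives
\[
J(tu_\varepsilon)\le\half\,H(t^2\norm{u_\varepsilon}^2)-\frac{t^{2^\ast}}{2^\ast}\int_\Omega u_\varepsilon^{2^\ast}\,dx-\frac{\nu t^q}{q}\int_\Omega u_\varepsilon^q\,dx\qquad(t\ge 0).
\]
Substituting $s=t^2\norm{u_\varepsilon}^2$ and maximizing in $s\ge 0$, the right-hand side equals $K(\hat s_\varepsilon)-\tfrac{2^\ast-q}{2^\ast}A_\varepsilon\hat s_\varepsilon^{q/2}$, where $\hat s_\varepsilon$ is the unique positive solution of
\[
\rho(\hat s_\varepsilon)=\sigma_\varepsilon+qA_\varepsilon\hat s_\varepsilon^{(q-2^\ast)/2},
\]
with $\rho(s):=h(s)/s^{2^\ast\!/2-1}$, $\sigma_\varepsilon:=\|u_\varepsilon\|^{-2^\ast}\int_\Omega u_\varepsilon^{2^\ast}\,dx=S^{-2^\ast\!/2}-\O(\varepsilon^{N-2})$ (the sign coming from the strict Sobolev inequality on $\Omega$), and $A_\varepsilon:=\nu\|u_\varepsilon\|^{-q}\int_\Omega u_\varepsilon^q\,dx/q>0$. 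Since the substituted function vanishes at $s=0$ and tends to $-\infty$ as $s\to\infty$ uniformly for $\varepsilon$ small by \ref{A3}, while its maximum stays $\ge K(t_0)/2>0$, the point $\hat s_\varepsilon$ lies in a compact subset of $(0,\infty)$.

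A dichotomy on $\hat s_\varepsilon$ bypasses any regularity question about $\rho$. If $\hat s_\varepsilon\le t_0$, then $K(\hat s_\varepsilon)\le K(t_0)=c^\ast$ since $K$ is nondecreasing, and the strictly negative perturbation $-\tfrac{2^\ast-q}{2^\ast}A_\varepsilon\hat s_\varepsilon^{q/2}$ immediately gives $\max_{t\ge 0}J(tu_\varepsilon)<c^\ast$. If instead $\hat s_\varepsilon>t_0$, the strict monotonicity of $\rho$ forces $\rho(\hat s_\varepsilon)\le S^{-2^\ast\!/2}$, which combined with the defining equation yields $qA_\varepsilon\hat s_\varepsilon^{(q-2^\ast)/2}\le S^{-2^\ast\!/2}-\sigma_\varepsilon=\O(\varepsilon^{N-2})$; boundedness of $\hat s_\varepsilon$ then forces $\int_\Omega u_\varepsilon^q\,dx=\O(\varepsilon^{N-2})$. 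The main obstacle is therefore to rule out this second alternative by showing $\int_\Omega u_\varepsilon^q\,dx\gg\varepsilon^{N-2}$, which is exactly the Br\'ezis--Nirenberg order comparison: $\int u_\varepsilon^q\asymp\varepsilon^{N-q(N-2)/2}$ dominates $\varepsilon^{N-2}$ precisely when $q(N-2)>4$, giving case (i) ($N=3$, $q>4$), while for $N\ge 4$ the condition $q\ge N/(N-2)$ suffices, the borderline $q=N/(N-2)$ (only $q=2$ at $N=4$) being covered by the $|\log\varepsilon|$ factor. In either listed case $\max_{t\ge 0}J(tu_\varepsilon)<c^\ast$ for $\varepsilon$ small, so taking the mountain pass path $s\mapsto sTu_\varepsilon$ with $T$ so large that $J(Tu_\varepsilon)<0$ yields a critical level $c\in(\alpha,c^\ast)$, and Theorem \ref{Theorem 2.3} produces the desired nontrivial solution as the limit of the associated \PS{c}-sequence.
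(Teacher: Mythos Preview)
Your proposal is correct and follows essentially the same route as the paper: mountain pass geometry from \eqref{1.12}--\eqref{1.13}, truncated Aubin--Talenti test functions, the critical-point equation for the one-variable maximum, and the Br\'ezis--Nirenberg order comparison $\int u_\eps^q\gg\eps^{N-2}$ forcing the maximizer to satisfy $\hat s_\eps\le t_0$, whence $K(\hat s_\eps)\le c^\ast$ with a strictly negative remainder. The paper organizes the key step as a contradiction (take $\eps_j\to 0$ with $\max\ge c^\ast$, show $t_j\to t_0$, then expand $h(t_j)/t_j^{2^\ast\!/2-1}$ to get $t_j\le t_0$ and hence $K(t_j)\le c^\ast<K(t_j)$) whereas you argue by a direct dichotomy on $\hat s_\eps$; one small caveat is that your claim of \emph{uniqueness} of $\hat s_\eps$ is unjustified (both sides of $\rho(s)=\sigma_\eps+qA_\eps s^{(q-2^\ast)/2}$ are decreasing), but this is harmless since any global maximizer suffices.
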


We will show that the functional $J$ has the mountain pass geometry and the mountain pass level is below the compactness threshold $c^\ast$ in \eqref{2.17}.

\begin{lemma} \label{Lemma 2.1}
If \eqref{1.2}, \eqref{1.12}, and \eqref{1.13} hold, then $\exists \rho > 0$ such that
\begin{equation} \label{2.7}
\inf_{\norm{u} = \rho}\, J(u) > 0.
\end{equation}
\end{lemma}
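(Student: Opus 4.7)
The plan is to establish a mountain-pass style lower bound on $J$ over a sphere of radius $\rho>0$ chosen small enough. Writing $\norm{u}^2=\int_\Omega|\nabla u|^2\,dx$, the argument splits naturally into three steps: a lower bound on the nonlocal kinetic term coming from \eqref{1.12}, a global upper bound on $F$ assembled from \eqref{1.13} and \eqref{1.2}, and a comparison of exponents to pick $\rho$.

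First, for $\norm{u}^2\le\delta$, hypothesis \eqref{1.12} applied with $t=\norm{u}^2$ gives $\frac{1}{2}H(\norm{u}^2)\ge \frac{b_0}{2}\norm{u}^{2\gamma_0}$. Next, I would prove the pointwise bound
\[
F(x,t)\le \frac{\mu_0 b_0}{2}|t|^{2\gamma_0}+C|t|^{2^\ast}\qquad\text{for a.a.\ } x\in\Omega \text{ and all } t\in\R,
\]
with some $C=C(\delta,c_1,c_2,p)$. On $\set{|t|\le\delta}$ this is exactly \eqref{1.13} (plus a nonnegative tail). On $\set{|t|>\delta}$, \eqref{1.2} yields $|F(x,t)|\le\frac{c_1}{p}|t|^p+c_2|t|$; since $1\le p<2^\ast$ and $|t|>\delta$, both $|t|^p$ and $|t|$ are bounded by constant multiples of $|t|^{2^\ast}$ (with constants $\delta^{p-2^\ast}$ and $\delta^{1-2^\ast}$ respectively), which are then absorbed into the $C|t|^{2^\ast}$ term.

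Integrating this pointwise bound, using the variational characterization \eqref{1.6} of $\lambda_1(\gamma_0)$ to estimate $\int_\Omega|u|^{2\gamma_0}\,dx\le\lambda_1(\gamma_0)^{-1}\norm{u}^{2\gamma_0}$, and using the Sobolev inequality \eqref{1.7} to estimate $\int_\Omega|u|^{2^\ast}\,dx\le S^{-2^\ast\!/2}\norm{u}^{2^\ast}$, I would obtain, for $\norm{u}^2\le\delta$,
\[
J(u)\ge\frac{b_0}{2}\left(1-\frac{\mu_0}{\lambda_1(\gamma_0)}\right)\norm{u}^{2\gamma_0}-\widetilde{C}\,\norm{u}^{2^\ast}.
\]
Since $\mu_0<\lambda_1(\gamma_0)$ by hypothesis, the coefficient of $\norm{u}^{2\gamma_0}$ is strictly positive, and since $\gamma_0<2^\ast/2$ one has $2^\ast>2\gamma_0$, so choosing $0<\rho\le\sqrt{\delta}$ small enough forces the right-hand side to be strictly positive on the whole sphere $\set{\norm{u}=\rho}$, which gives \eqref{2.7}.

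I do not foresee any substantive obstacle: this is a routine mountain-pass lower bound. The only point requiring mild care is that \eqref{1.13} is one-sided and only valid for $|t|\le\delta$, so the global pointwise upper bound on $F$ has to be assembled by splitting the range of $u(x)$ at $\delta$ and absorbing the large-$|t|$ contribution from \eqref{1.2} into a single $C|t|^{2^\ast}$ term.
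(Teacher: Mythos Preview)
Your proposal is correct and follows essentially the same route as the paper: combine \eqref{1.12} with a global pointwise bound $F(x,t)\le \tfrac12\mu_0 b_0|t|^{2\gamma_0}+C|t|^{2^\ast}$ obtained by patching \eqref{1.13} with \eqref{1.2}, then use \eqref{1.6} and the Sobolev inequality to arrive at $J(u)\ge \tfrac{b_0}{2}\big(1-\mu_0/\lambda_1(\gamma_0)\big)\norm{u}^{2\gamma_0}-\widetilde C\,\norm{u}^{2^\ast}$ for $\norm{u}\le\sqrt\delta$. The paper phrases the tail as an $\o(1)\norm{u}^{2\gamma_0}$ correction rather than isolating the explicit $\norm{u}^{2^\ast}$ term, but the argument is the same.
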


\begin{proof}
By \eqref{1.2} and \eqref{1.13},
\[
F(x,t) \le \half\, \mu_0 b_0\, |t|^{2\gamma_0} + c_5 |t|^{2^\ast} \quad \text{for a.a.\! } x \in \Omega \text{ and all } t \in \R
\]
for some constant $c_5 > 0$. This together with \eqref{1.12} implies that for $\norm{u} \le \sqrt{\delta}$,
\begin{multline*}
J(u) \ge \half\, b_0 \left[\left(\int_\Omega |\nabla u|^2\, dx\right)^{\gamma_0} - \mu_0 \int_\Omega |u|^{2\gamma_0}\, dx\right] - \left(c_5 + \frac{1}{2^\ast}\right) \int_\Omega |u|^{2^\ast} dx\\[10pt]
\ge \half\, b_0 \left(1 - \frac{\mu_0}{\lambda_1(\gamma_0)} + \o(1)\right) \norm{u}^{2\gamma_0} \quad \text{as } \norm{u} \to 0
\end{multline*}
since $2^\ast > 2 \gamma_0$. Since $\mu_0 < \lambda_1(\gamma_0)$, the desired conclusion follows from this.
\end{proof}

Next we show that for a suitably chosen $v \in H^1_0(\Omega) \setminus \set{0}$, $J(sv) \to - \infty$ as $s \to + \infty$ and the maximum of $J$ on the ray $sv,\, s \ge 0$ is strictly less than $c^\ast$. Take a function $\psi \in C^\infty_0(B_r(x_0))$ such that $0 \le \psi \le 1$ on $B_r(x_0)$ and $\psi = 1$ on $B_{r/2}(x_0)$, and set
\[
u_\eps(x) = \frac{\psi(x)}{\left(\eps + |x - x_0|^2\right)^{(N-2)/2}}
\]
and
\[
v_\eps = \frac{u_\eps}{\pnorm[2^\ast]{u_\eps}}
\]
for $\eps > 0$. Then
\begin{equation} \label{2.9}
\int_\Omega |\nabla v_\eps|^2\, dx = S + \O\big(\eps^{(N-2)/2}\big)
\end{equation}
and
\begin{equation} \label{2.10}
\int_\Omega v_\eps^q\, dx = \begin{cases}
\kappa \eps^{(2N-(N-2)q)/4} + \O\big(\eps^{(N-2)q/4}\big) & \text{if } q > N/(N - 2)\\[10pt]
\kappa \eps^{N/4}\, |\log \eps| + \O\big(\eps^{N/4}\big) & \text{if } q = N/(N - 2)
\end{cases}
\end{equation}
for some constant $\kappa > 0$ (see, e.g., Dr{\'a}bek and Huang \cite{MR1473856}).

\begin{lemma}
For all sufficiently small $\eps > 0$,
\begin{equation} \label{2.16}
J(sv_\eps) \to - \infty \quad \text{as } s \to + \infty.
\end{equation}
\end{lemma}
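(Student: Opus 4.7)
The plan is to estimate $J(sv_\eps)$ from above by a polynomial in $s$ whose leading term is of the form $\frac{1}{2^\ast}\bigl((b+\sigma)\norm{v_\eps}^{2^\ast}-1\bigr)s^{2^\ast}$, and then to choose $\sigma$ and $\eps$ so that this coefficient is strictly negative.

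Step 1 (the functional along the ray). Since $\pnorm[2^\ast]{v_\eps}=1$, we have
\[
J(sv_\eps)=\tfrac{1}{2}\,H\bigl(s^2\norm{v_\eps}^2\bigr)-\int_\Omega F(x,sv_\eps)\,dx-\frac{s^{2^\ast}}{2^\ast}.
\]

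Step 2 (asymptotic bound on $H$). By hypothesis \ref{A3}, $h(t)/t^{2^\ast\!/2-1}$ decreases strictly to $b<S^{-2^\ast\!/2}$ as $t\to+\infty$. Fix $\sigma>0$ with $(b+\sigma)S^{2^\ast\!/2}<1$. Then there exists $T_\sigma$ such that $h(t)\le (b+\sigma)\,t^{2^\ast\!/2-1}$ for all $t\ge T_\sigma$, and integrating gives a constant $C_\sigma$ with
\[
H(t)\le C_\sigma+\frac{2(b+\sigma)}{2^\ast}\,t^{2^\ast\!/2}\quad\forall t\ge 0.
\]

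Step 3 (control of the subcritical term). By \eqref{1.2}, $|F(x,t)|\le \frac{c_1}{p}|t|^p+c_2|t|$, so, since $v_\eps$ is a fixed $H^1_0$-function for each $\eps$,
\[
-\int_\Omega F(x,sv_\eps)\,dx\le C'_\eps s^p+C''_\eps s
\]
for constants depending on $\eps$.

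Step 4 (combine and conclude). Putting Steps 2 and 3 into Step 1 yields
\[
J(sv_\eps)\le \frac{1}{2^\ast}\bigl((b+\sigma)\norm{v_\eps}^{2^\ast}-1\bigr)s^{2^\ast}+C'_\eps s^p+C''_\eps s+\tfrac{1}{2}C_\sigma.
\]
From \eqref{2.9}, $\norm{v_\eps}^2=S+\O(\eps^{(N-2)/2})$, hence $\norm{v_\eps}^{2^\ast}\to S^{2^\ast\!/2}$ as $\eps\to 0$. By the choice of $\sigma$, for all sufficiently small $\eps>0$ we have $(b+\sigma)\norm{v_\eps}^{2^\ast}<1$, so the coefficient of $s^{2^\ast}$ is strictly negative. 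Since $p<2^\ast$, it follows that $J(sv_\eps)\to-\infty$ as $s\to+\infty$, proving \eqref{2.16}.

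The only nontrivial ingredient is Step 2: everything else is algebra. The point of \ref{A3} (and of the smallness condition $b<S^{-2^\ast\!/2}$) is precisely to guarantee that $H$ grows no faster than a strictly subcritical multiple of $t^{2^\ast\!/2}$, so that the critical term $-s^{2^\ast}/2^\ast$ dominates even after accounting for the nonlocal part of $J$.
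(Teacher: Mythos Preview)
Your proof is correct and follows essentially the same route as the paper: both arguments use \ref{A3} to bound $H(t)$ from above by a term of the form $\frac{2(b+\o(1))}{2^\ast}\,t^{2^\ast\!/2}$, control the subcritical term via \eqref{1.2}, and then use $\norm{v_\eps}^2\to S$ together with $b<S^{-2^\ast\!/2}$ to make the leading $s^{2^\ast}$-coefficient negative for all sufficiently small $\eps$. The only cosmetic difference is that the paper phrases Step~2 via the limit $H(t)/t^{2^\ast\!/2}\to 2b/2^\ast$ and changes variable to $t=s^2\norm{v_\eps}^2$, whereas you fix $\sigma$ first and integrate $h$ explicitly; these are equivalent.
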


\begin{proof}
Since $\pnorm[2^\ast]{v_\eps} = 1$, $\pnorm[p]{v_\eps}$ is bounded and \eqref{1.2} gives
\begin{equation} \label{2.8}
J(sv_\eps) \le \half\, H\bigg(s^2 \int_\Omega |\nabla v_\eps|^2\, dx\bigg) + c_6 s^p + c_7 - \frac{s^{2^\ast}}{2^\ast}, \quad s \ge 0
\end{equation}
for some constants $c_6, c_7 > 0$. Set
\[
t = s^2 \int_\Omega |\nabla v_\eps|^2\, dx.
\]
Then $t \to + \infty$ as $s \to + \infty$ and \eqref{2.8} gives
\begin{equation} \label{2.11}
J(sv_\eps) \le \half\, H(t) + c_6 t^{p/2} \left(\int_\Omega |\nabla v_\eps|^2\, dx\right)^{-p/2} + c_7 - \frac{t^{2^\ast\!/2}}{2^\ast} \left(\int_\Omega |\nabla v_\eps|^2\, dx\right)^{-2^\ast\!/2}.
\end{equation}
By \ref{A3},
\[
\lim_{t \to + \infty}\, \frac{H(t)}{t^{2^\ast\!/2}} = \frac{2b}{2^\ast},
\]
so \eqref{2.11} gives
\[
J(sv_\eps) \le c_6 t^{p/2} \left(\int_\Omega |\nabla v_\eps|^2\, dx\right)^{-p/2} + c_7 - \frac{t^{2^\ast\!/2}}{2^\ast} \left[\left(\int_\Omega |\nabla v_\eps|^2\, dx\right)^{-2^\ast\!/2} - b + \o(1)\right]
\]
as $t \to + \infty$. Since $\int_\Omega |\nabla v_\eps|^2\, dx \to S$ as $\eps \to 0$ by \eqref{2.9}, $b < S^{-2^\ast\!/2}$, and $p < 2^\ast$, the desired conclusion follows.
\end{proof}

\begin{lemma}
In each of the two cases in Theorem \ref{Theorem 1.10},
\begin{equation} \label{2.12}
\max_{s \ge 0}\, J(sv_\eps) < c^\ast
\end{equation}
for all sufficiently small $\eps > 0$.
\end{lemma}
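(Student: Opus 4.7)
The plan is to split $J(sv_\eps) = J_0(sv_\eps) - \int_\Omega F(x, sv_\eps)\, dx$, where
\[
J_0(u) = \half H\bigg(\int_\Omega |\nabla u|^2\, dx\bigg) - \frac{1}{2^\ast} \int_\Omega |u|^{2^\ast} dx
\]
isolates the Kirchhoff-plus-critical part, and estimate the two contributions separately. With $A_\eps := \int_\Omega |\nabla v_\eps|^2\, dx$ and $\pnorm[2^\ast]{v_\eps} = 1$, the substitution $t = s^2 A_\eps$ gives $\max_{s \ge 0} J_0(sv_\eps) = \eta(A_\eps)$, where $\eta(A) := \max_{t \ge 0}\bigl[\half H(t) - t^{2^\ast/2}/(2^\ast A^{2^\ast/2})\bigr]$. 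By \ref{A3} the maximum is attained at the unique positive $t_\eps$ solving $h(t_\eps) = A_\eps^{-2^\ast/2}\, t_\eps^{2^\ast/2-1}$, with $t_\eps \to t_0$ as $\eps \to 0$ since $A_\eps \to S$ by \eqref{2.9}; the value equals $K(t_\eps)$, and in particular $\eta(S) = K(t_0) = c^\ast$.

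To control the positive correction $\eta(A_\eps) - c^\ast$ I would use $t_\eps$ as a competitor in the supremum defining $\eta(S)$:
\[
\eta(A_\eps) - \eta(S) \le \Bigl[\half H(t_\eps) - \tfrac{t_\eps^{2^\ast/2}}{2^\ast A_\eps^{2^\ast/2}}\Bigr] - \Bigl[\half H(t_\eps) - \tfrac{t_\eps^{2^\ast/2}}{2^\ast S^{2^\ast/2}}\Bigr] = \frac{t_\eps^{2^\ast/2}}{2^\ast}\bigl(S^{-2^\ast/2} - A_\eps^{-2^\ast/2}\bigr).
\]
Since $t_\eps$ is bounded and $A_\eps - S = \O(\eps^{(N-2)/2})$ by \eqref{2.9}, this gives $\eta(A_\eps) - c^\ast = \O(\eps^{(N-2)/2})$. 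The comparison sidesteps any Taylor expansion of the implicit function $K$, for which no closed form is available.

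Next I would verify that the maximizer $s_\eps$ of $s \mapsto J(sv_\eps)$ lies in a compact subinterval $[s_1,s_2] \subset (0,\infty)$ for all small $\eps$. The upper bound makes \eqref{2.16} uniform: by \ref{A3} and $A_\eps \to S$, the coefficient of the leading $s^{2^\ast}$ term in the estimate proving \eqref{2.16} stays negative independently of small $\eps$. The lower bound uses Lemma \ref{Lemma 2.1}: choosing $s$ with $\norm{sv_\eps} = \rho$ is feasible uniformly, so $J(s_\eps v_\eps) = \max_s J(sv_\eps) \ge \mu > 0$ independently of $\eps$, whereas $J(sv_\eps) \to 0$ as $s \to 0$ uniformly in $\eps$ (each term vanishes uniformly, since $v_\eps$ is bounded in $H^1_0(\Omega)$ and hence in $L^p(\Omega)$ by Sobolev). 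With $s_\eps \ge s_1 > 0$, $v_\eps \ge 0$, and $\operatorname{supp} v_\eps \subset B_r(x_0)$, assumption \eqref{1.14} yields
\[
\int_\Omega F(x, s_\eps v_\eps)\, dx \ge \frac{\nu}{q}\, s_\eps^q \int_{B_r(x_0)} v_\eps^q\, dx \ge C_2 \int_{B_r(x_0)} v_\eps^q\, dx,
\]
so that
\[
\max_{s \ge 0} J(sv_\eps) \le \eta(A_\eps) - C_2 \int_{B_r(x_0)} v_\eps^q\, dx \le c^\ast + C_1 \eps^{(N-2)/2} - C_2 \int_{B_r(x_0)} v_\eps^q\, dx.
\]

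To finish I would compare orders using \eqref{2.10}. When $q > N/(N-2)$ the concentration term is $\sim \eps^{(2N-(N-2)q)/4}$, which dominates $\eps^{(N-2)/2}$ exactly when $q > 4/(N-2)$; this is case $(i)$ ($N=3$, $q > 4$) and is implied by $q \ge N/(N-2)$ in case $(ii)$ whenever $N \ge 5$ or $N = 4$ with $q > 2$. When $q = N/(N-2)$ the logarithmic factor in \eqref{2.10} decides the borderline: $\eps^{N/4}|\log\eps|$ dominates $\eps^{(N-2)/2}$ both when $N \ge 5$ (strict power gap) and when $N = 4$ (equal power, $|\log \eps|$ wins). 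In every subcase of $(i)$ and $(ii)$ the negative term beats the positive correction for $\eps$ small, establishing \eqref{2.12}. The principal obstacle is the quantitative estimate $\eta(A_\eps) - c^\ast = \O(\eps^{(N-2)/2})$ in the absence of a closed form for $H$ or $K$, resolved by the competitor comparison; the secondary subtlety is pinning $s_\eps$ uniformly away from zero, which is handled by the uniform positivity of the mountain-pass level supplied by Lemma \ref{Lemma 2.1}.
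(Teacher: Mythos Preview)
Your proof is correct but takes a genuinely different route from the paper. The paper argues by contradiction: assuming sequences $\eps_j \to 0$ and maximizers $s_j$ with $z_{\eps_j}(s_j) \ge c^\ast$, it sets $t_j = s_j^2 \int_\Omega |\nabla v_{\eps_j}|^2$, shows $t_j \to t_0$, and then uses the critical point equation $z_{\eps_j}'(s_j) = 0$ --- where the presence of the positive $\nu \int v_{\eps_j}^q$ term forces $h(t_j)/t_j^{2^\ast/2-1} \ge S^{-2^\ast/2}$ for large $j$, hence $t_j \le t_0$ by \ref{A3} and $K(t_j) \le c^\ast$; but subtracting $1/2^\ast$ times the critical point equation from the value inequality yields $K(t_j) > c^\ast$, a contradiction. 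Your approach instead splits $J = J_0 - \int F$, bounds $\max_s J_0(sv_\eps) = \eta(A_\eps) \le c^\ast + \O\big(\eps^{(N-2)/2}\big)$ by a clean competitor trick, pins $s_\eps$ in a compact interval, and then shows the subcritical term contributes a larger negative correction via the standard order comparison. Your method is more modular and makes the competition between the error and the concentration gain explicit; the paper's argument is softer in that it never needs the quantitative $\O\big(\eps^{(N-2)/2}\big)$ estimate on $\eta(A_\eps) - c^\ast$, extracting the conclusion instead from monotonicity of $K$ and the sign of the extra term in the Euler equation.
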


\begin{proof}
Since $v_\eps = 0$ outside $B_r(x_0)$, \eqref{1.14} gives
\[
J(sv_\eps) \le \half\, H\bigg(s^2 \int_\Omega |\nabla v_\eps|^2\, dx\bigg) - \frac{1}{q}\, \nu s^q \int_\Omega v_\eps^q\, dx - \frac{s^{2^\ast}}{2^\ast} =: z_\eps(s),
\]
so it suffices to show that
\[
\max_{s \ge 0}\, z_\eps(s) < c^\ast
\]
for sufficiently small $\eps > 0$. Suppose this is false. Then there are sequences $\seq{\eps_j}$ and $\seq{s_j}$, with $\eps_j, s_j > 0$ and $\eps_j \to 0$, such that
\begin{equation} \label{2.13}
z_{\eps_j}(s_j) = \half\, H\bigg(s_j^2 \int_\Omega |\nabla v_{\eps_j}|^2\, dx\bigg) - \frac{1}{q}\, \nu s_j^q \int_\Omega v_{\eps_j}^q\, dx - \frac{s_j^{2^\ast}}{2^\ast} \ge c^\ast
\end{equation}
and
\begin{equation} \label{2.14}
s_j\, z_{\eps_j}'(s_j) = h\bigg(s_j^2 \int_\Omega |\nabla v_{\eps_j}|^2\, dx\bigg)\, s_j^2 \int_\Omega |\nabla v_{\eps_j}|^2\, dx - \nu s_j^q \int_\Omega v_{\eps_j}^q\, dx - s_j^{2^\ast} = 0.
\end{equation}
Set
\[
t_j = s_j^2 \int_\Omega |\nabla v_{\eps_j}|^2\, dx.
\]
Then \eqref{2.14} gives
\begin{equation} \label{2.15}
\frac{h(t_j)}{t_j^{2^\ast\!/2-1}} = \frac{1}{\left(\dint_\Omega |\nabla v_{\eps_j}|^2\, dx\right)^{2^\ast\!/2}} + \nu t_j^{-(2^\ast-q)/2}\, \frac{\dint_\Omega v_{\eps_j}^q\, dx}{\left(\dint_\Omega |\nabla v_{\eps_j}|^2\, dx\right)^{q/2}}.
\end{equation}
If $t_j \to + \infty$ for a renamed subsequence, then the left-hand side goes to $b$ by \ref{A3}, while the right-hand side goes to $S^{-2^\ast\!/2}$ since $\int_\Omega |\nabla v_{\eps_j}|^2\, dx \to S$ by \eqref{2.9} and $\int_\Omega v_{\eps_j}^q\, dx \to 0$ by \eqref{2.10}, contradicting our assumption that $b < S^{-2^\ast\!/2}$. So $\seq{t_j}$ is bounded, and hence converges to some $t \ge 0$ for a renamed subsequence. Then $s_j^2 \to S^{-1}\, t$ and hence passing to the limit in \eqref{2.13} gives
\[
\half\, H(t) - \frac{1}{2^\ast}\, S^{-2^\ast\!/2}\, t^{2^\ast\!/2} > 0
\]
since $c^\ast > 0$, so $t > 0$. On the other hand, passing to the limit in \eqref{2.14} shows that $t$ satisfies \eqref{1.9}. Since $t_0$ is the unique positive solution of this equation, it follows that $t = t_0$.

Now combining \eqref{2.15} with \eqref{2.9} and \eqref{2.10} gives
\[
\frac{h(t_j)}{t_j^{2^\ast\!/2-1}} = S^{-2^\ast\!/2} + \begin{cases}
\sigma_j\, \eps_j^{(2N-(N-2)q)/4} + \O\big(\eps_j^{(N-2)/2}\big) & \text{if } q > N/(N - 2)\\[10pt]
\sigma_j\, \eps_j^{N/4}\, |\log \eps_j| + \O\big(\eps_j^{\min \set{(N-2)/2,N/4}}\big) & \text{if } q = N/(N - 2),
\end{cases}
\]
where $\sigma_j \to \kappa \nu S^{-q/2}\, t_0^{-(2^\ast-q)/2} > 0$. It follows from this that in each of the two cases in the lemma,
\[
\frac{h(t_j)}{t_j^{2^\ast\!/2-1}} \ge S^{-2^\ast\!/2} = \frac{h(t_0)}{t_0^{2^\ast\!/2-1}}
\]
for all sufficiently large $j$. Then $t_j \le t_0$ by \ref{A3}. Since $K$ is nondecreasing, then
\[
K(t_j) \le K(t_0) = c^\ast.
\]
However, dividing \eqref{2.14} by $2^\ast$ and subtracting from \eqref{2.13} gives
\[
K(t_j) - \left(\frac{1}{q} - \frac{1}{2^\ast}\right) \nu s_j^q \int_\Omega v_{\eps_j}^q\, dx \ge c^\ast,
\]
so $K(t_j) > c^\ast$. This contradiction completes the proof.
\end{proof}

We are now ready to prove Theorem \ref{Theorem 1.10}.

\begin{proof}[Proof of Theorem \ref{Theorem 1.10}]
Let $\rho$ be as in Lemma \ref{Lemma 2.1} and fix $\eps > 0$ such that \eqref{2.16} and \eqref{2.12} hold. Then $\exists R > \rho$ such that $J(Rv_\eps) \le 0$. Let
\[
\Gamma = \set{\varphi \in C([0,1],H^1_0(\Omega)) : \varphi(0) = 0,\, \varphi(1) = Rv_\eps}
\]
be the class of paths in $H^1_0(\Omega)$ joining the origin to $Rv_\eps$, and set
\[
c := \inf_{\varphi \in \Gamma}\, \max_{u \in \varphi([0,1])}\, J(u).
\]
By \eqref{2.7}, $c > 0$. Since the path $\varphi_0(s) = sRv_\eps,\, s \in [0,1]$ is in $\Gamma$,
\[
c \le \max_{u \in \varphi_0([0,1])}\, J(u) \le \max_{s \ge 0}\, J(sv_\eps) < c^\ast,
\]
so $J$ satisfies the \PS{c} condition by Theorem \ref{Theorem 2.3}. Hence $J$ has a critical point $u$ with $J(u) = c$ by the mountain pass theorem (see Ambrosetti and Rabinowitz \cite{MR0370183}). Then $u$ is a weak solution of problem \eqref{1.1} and $u$ is nontrivial since $c > 0$.
\end{proof}

Theorem \ref{Theorem 1.10} has many interesting consequences, some of which we now present. First we consider the problem
\begin{equation} \label{1.15}
\left\{\begin{aligned}
- h\bigg(\int_\Omega |\nabla u|^2\, dx\bigg)\, \Delta u & = \lambda u + |u|^{2^\ast-2}\, u && \text{in } \Omega\\[5pt]
u & = 0 && \text{on } \bdry{\Omega},
\end{aligned}\right.
\end{equation}
where $\lambda > 0$. Assume that
\begin{equation} \label{3.15}
K(t) \ge \alpha t \quad \forall t \ge 0
\end{equation}
for some constant $\alpha > 0$ and
\begin{equation} \label{1.16}
H(t) \ge a_0\, t \quad \text{for } 0 \le t \le \delta
\end{equation}
for some constants $\delta, a_0 > 0$. We have $f(x,t) = \lambda t$ and
\[
F(x,t) = \half\, \lambda t^2, \qquad F(x,t) - \frac{1}{2^\ast}\, tf(x,t) = \frac{1}{N}\, \lambda t^2,
\]
so \ref{A1} holds with $\mu_1 = \lambda/N \alpha$ if $\lambda < N \alpha \lambda_1$, \eqref{1.13} holds with $\gamma_0 = 1$ and $\mu_0 = \lambda/a_0$ if $\lambda < a_0 \lambda_1$, and \eqref{1.14} holds with $q = 2$ if $\lambda > 0$. So Theorem \ref{Theorem 1.10} has the following corollary for problem \eqref{1.15}.

\begin{corollary} \label{Theorem 1.11}
Assume \eqref{3.15}, \ref{A2}, \ref{A3}, and \eqref{1.16}. If
\[
0 < \lambda < \min \set{a_0,N \alpha} \lambda_1
\]
and $N \ge 4$, then problem \eqref{1.15} has a nontrivial solution.
\end{corollary}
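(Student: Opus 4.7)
The plan is to deduce this corollary as a direct application of Theorem \ref{Theorem 1.10}. With $f(x,t)=\lambda t$, one computes $F(x,t)=\tfrac{1}{2}\lambda t^2$ and $F(x,t)-\tfrac{1}{2^\ast}tf(x,t)=\tfrac{\lambda}{N}t^2$, and the subcritical growth bound \eqref{1.2} is automatic. Hypotheses \ref{A2} and \ref{A3} are assumed outright, so the real work lies in matching the remaining quantitative conditions of Theorem \ref{Theorem 1.10}.

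For \ref{A1}, I would take the single-term configuration $n=1$, $\gamma_1=1$, $\alpha_1=\alpha$, and $\mu_1=\lambda/(N\alpha)$. The lower bound on $K$ is then precisely \eqref{3.15}, while the upper bound on $F(x,t)-\tfrac{1}{2^\ast}tf(x,t)$ holds with equality by the computation above; note $\gamma_1=1<2^\ast/2$ since $N\ge 3$. The strictness requirement $\mu_1<\lambda_1(1)=\lambda_1$ translates into $\lambda<N\alpha\lambda_1$. Similarly, \eqref{1.12} is satisfied with $\gamma_0=1$ and $b_0=a_0$ by invoking \eqref{1.16}, and \eqref{1.13} holds with $\mu_0=\lambda/a_0$, the strictness $\mu_0<\lambda_1$ yielding $\lambda<a_0\lambda_1$. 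These two constraints combine into the hypothesis $\lambda<\min\set{a_0,N\alpha}\lambda_1$.

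It remains to check \eqref{1.14} and the dimension condition. Taking $q=2$ and $\nu=\lambda>0$, the lower bound $F(x,t)\ge\tfrac{\lambda}{2}t^2$ holds on all of $\Omega\times[0,\infty)$, so in particular on any ball $B_r(x_0)\subset\Omega$; and $2\gamma_0=q=2\gamma_n=2$, so the pinching $2\gamma_0\le q\le 2\gamma_n$ is satisfied. Finally, for $N\ge 4$ the inequality $q\ge N/(N-2)$ becomes $2\ge N/(N-2)$, which is equivalent to $N\ge 4$, exactly our hypothesis. Theorem \ref{Theorem 1.10} then delivers the nontrivial weak solution. I do not anticipate any serious obstacle here, since every hypothesis of Theorem \ref{Theorem 1.10} is already tailored for this situation; the only bookkeeping point is to verify that the two strict inequalities in \ref{A1} and \eqref{1.13} jointly yield the combined bound on $\lambda$.
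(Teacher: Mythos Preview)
Your proposal is correct and follows exactly the approach of the paper: the paragraph preceding the corollary verifies the hypotheses of Theorem \ref{Theorem 1.10} with precisely the choices $n=1$, $\gamma_1=\gamma_0=1$, $\alpha_1=\alpha$, $b_0=a_0$, $\mu_1=\lambda/(N\alpha)$, $\mu_0=\lambda/a_0$, and $q=2$, and then invokes that theorem. Your bookkeeping on the strict inequalities and the dimensional condition $q\ge N/(N-2)$ matches the paper's reasoning verbatim.
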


In particular, we have the following corollary in the model case $h(t) = a + bt^{\gamma-1}$.

\begin{corollary} \label{Corollary 3.6}
The problem
\[
\left\{\begin{aligned}
- \left[a + b \left(\int_\Omega |\nabla u|^2\, dx\right)^{\gamma-1}\right] \Delta u & = \lambda u + |u|^{2^\ast-2}\, u && \text{in } \Omega\\[5pt]
u & = 0 && \text{on } \bdry{\Omega},
\end{aligned}\right.
\]
where either $1 < \gamma < 2^\ast/2$ and $b \ge 0$, or $\gamma = 2^\ast/2$ and $0 \le b < S^{-2^\ast\!/2}$, has a nontrivial solution if $0 < \lambda < a \lambda_1$ and $N \ge 4$.
\end{corollary}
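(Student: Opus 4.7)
The plan is to reduce Corollary \ref{Corollary 3.6} to a direct application of Corollary \ref{Theorem 1.11} by verifying each of its abstract hypotheses \eqref{3.15}, \ref{A2}, \ref{A3}, and \eqref{1.16} for the model choice $h(t) = a + bt^{\gamma-1}$. Since $0 < \lambda < a\lambda_1$ forces $a > 0$, and a routine computation gives $H(t) = at + (b/\gamma)\, t^\gamma$ together with
\[
K(t) = \frac{1}{N}\, at + \left(\frac{1}{2\gamma} - \frac{1}{2^\ast}\right) b\, t^\gamma,
\]
the whole verification should be essentially an inspection.

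First I would dispatch \eqref{3.15} and \eqref{1.16}. Since $b \ge 0$ and the coefficient $1/(2\gamma) - 1/2^\ast$ is nonnegative for $\gamma \le 2^\ast/2$, both terms of $K(t)$ are nonnegative, so $K(t) \ge (a/N)\, t$ gives \eqref{3.15} with $\alpha = a/N$; similarly $H(t) \ge at$ for all $t \ge 0$, so \eqref{1.16} holds with $a_0 = a$ for any $\delta > 0$. For \ref{A2}, the linear term in $K$ is additive, and the pure power $t^\gamma$ is superadditive for $\gamma \ge 1$ because $(t_1 + t_2)^\gamma \ge t_1^\gamma + t_2^\gamma$ for $t_1, t_2 \ge 0$; summing the two gives superadditivity of $K$.

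The only step requiring a small case analysis is \ref{A3}. Writing
\[
\frac{h(t)}{t^{2^\ast\!/2 - 1}} = a\, t^{1 - 2^\ast\!/2} + b\, t^{\gamma - 2^\ast\!/2},
\]
in the case $1 < \gamma < 2^\ast/2$ (with $b \ge 0$) both exponents are negative, so, using $a > 0$, this function is strictly decreasing on $(0,\infty)$ with $\lim_{t \to 0^+} = +\infty$ and $\lim_{t \to +\infty} = 0$; thus \ref{A3} holds with limit $b := 0 < S^{-2^\ast\!/2}$. In the case $\gamma = 2^\ast/2$ the expression collapses to $a\, t^{1 - 2^\ast\!/2} + b$, strictly decreasing from $+\infty$ down to $b$, and the standing assumption $b < S^{-2^\ast\!/2}$ is precisely what \ref{A3} demands.

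Finally, $\min\set{a_0,\, N\alpha} = \min\set{a,\, a} = a$, so the bound $0 < \lambda < \min\set{a_0, N\alpha}\,\lambda_1$ in Corollary \ref{Theorem 1.11} reads exactly $0 < \lambda < a\lambda_1$, which is in force, and $N \ge 4$ is also given; Corollary \ref{Theorem 1.11} then delivers the desired nontrivial solution. I do not anticipate a genuine obstacle here: the existence work has already been carried out at the abstract level in Corollary \ref{Theorem 1.11}, and beyond the case analysis for \ref{A3} everything else is just reading off coefficients of the model $h$.
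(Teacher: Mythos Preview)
Your proposal is correct and matches the paper's approach: the paper states Corollary \ref{Corollary 3.6} immediately after Corollary \ref{Theorem 1.11} with no separate proof, leaving the verification of \eqref{3.15}, \ref{A2}, \ref{A3}, and \eqref{1.16} for the model $h(t)=a+bt^{\gamma-1}$ to the reader, exactly as you have carried it out. Your computation of $\alpha=a/N$ and $a_0=a$, the superadditivity check, and the two-case analysis of \ref{A3} are all sound, and the reduction to $\min\{a_0,N\alpha\}=a$ closes the loop.
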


Next we consider the problem
\begin{equation} \label{1.17}
\left\{\begin{aligned}
- h\bigg(\int_\Omega |\nabla u|^2\, dx\bigg)\, \Delta u & = \mu\, |u|^{2\gamma-2}\, u + |u|^{2^\ast-2}\, u && \text{in } \Omega\\[5pt]
u & = 0 && \text{on } \bdry{\Omega},
\end{aligned}\right.
\end{equation}
where $\mu > 0$ and $1 < \gamma < 2^\ast/2$. Assume that
\begin{equation} \label{3.18}
K(t) \ge \beta t^\gamma \quad \forall t \ge 0
\end{equation}
for some constant $\beta > 0$ and
\begin{equation} \label{1.18}
H(t) \ge b_0 t^\gamma \quad \text{for } 0 \le t \le \delta
\end{equation}
for some constants $\delta, b_0 > 0$. We have $f(x,t) = \mu\, |t|^{2\gamma-2}\, t$ and
\[
F(x,t) = \frac{1}{2 \gamma}\, \mu\, |t|^{2\gamma}, \qquad F(x,t) - \frac{1}{2^\ast}\, tf(x,t) = \left(\frac{1}{2 \gamma} - \frac{1}{2^\ast}\right) \mu\, |t|^{2\gamma},
\]
so \ref{A1} holds with $\mu_1 = (1/2 \gamma - 1/2^\ast)\, \mu/\beta$ if
\[
\mu < \left(\frac{1}{2 \gamma} - \frac{1}{2^\ast}\right)^{-1} \beta \lambda_1(\gamma),
\]
\eqref{1.13} holds with $\gamma_0 = \gamma$ and $\mu_0 = \mu/\gamma b_0$ if $\mu < \gamma b_0 \lambda_1(\gamma)$, and \eqref{1.14} holds with $q = 2 \gamma$ if $\mu > 0$. So Theorem \ref{Theorem 1.10} has the following corollary for problem \eqref{1.17}.

\begin{corollary} \label{Theorem 1.12}
Assume \eqref{3.18}, \ref{A2}, \ref{A3}, and \eqref{1.18}. If
\[
0 < \mu < \min \set{\gamma b_0,\left(\frac{1}{2 \gamma} - \frac{1}{2^\ast}\right)^{-1} \beta} \lambda_1(\gamma)
\]
and $N \ge 4$, or $N = 3$ and $\gamma > 2$, then problem \eqref{1.17} has a nontrivial solution.
\end{corollary}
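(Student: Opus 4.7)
The plan is to deduce this corollary as a direct specialization of Theorem~\ref{Theorem 1.10} to problem~\eqref{1.17} with $f(x,t)=\mu\,|t|^{2\gamma-2}\,t$. The computations $F(x,t)=\frac{\mu}{2\gamma}|t|^{2\gamma}$ and $F(x,t)-\frac{1}{2^\ast}tf(x,t)=\bigl(\frac{1}{2\gamma}-\frac{1}{2^\ast}\bigr)\mu|t|^{2\gamma}$ have already been carried out in the paragraph preceding the statement, so essentially all that remains is to assemble the verifications of \eqref{1.2}, \ref{A1}, \eqref{1.12}, \eqref{1.13}, \eqref{1.14} and to read off the dimensional restriction.

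First I would check the subcritical growth condition \eqref{1.2}: with $p=2\gamma$ one has $|f(x,t)|\le \mu|t|^{2\gamma-1}$, and $p<2^\ast$ follows from $\gamma<2^\ast/2$. Next, using \eqref{3.18} and the formula for $F-\frac{1}{2^\ast}tf$, hypothesis~\ref{A1} is satisfied with a single term: $n=1$, $\gamma_1=\gamma$, $\alpha_1=\beta$, and $\mu_1=\bigl(\frac{1}{2\gamma}-\frac{1}{2^\ast}\bigr)\mu/\beta$; the bound $\mu<\bigl(\frac{1}{2\gamma}-\frac{1}{2^\ast}\bigr)^{-1}\beta\,\lambda_1(\gamma)$ gives precisely $\mu_1<\lambda_1(\gamma_1)$. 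Assumptions \ref{A2} and \ref{A3} are hypotheses of the corollary. For the near-zero control, I would take $\gamma_0=\gamma$: \eqref{1.18} gives \eqref{1.12}, and $F(x,t)=\frac{\mu}{2\gamma}|t|^{2\gamma}\le\frac{1}{2}\mu_0 b_0|t|^{2\gamma_0}$ holds with $\mu_0=\mu/(\gamma b_0)$, which obeys $\mu_0<\lambda_1(\gamma_0)$ exactly when $\mu<\gamma b_0\,\lambda_1(\gamma)$. Finally \eqref{1.14} is immediate for $t\ge0$ with $q=2\gamma$ and $\nu=\mu$, and $q=2\gamma_0=2\gamma_n$ lies trivially in the required range $[2\gamma_0,2\gamma_n]$.

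It remains to check the dimensional hypothesis of Theorem~\ref{Theorem 1.10}. When $N\ge 4$ one has $q=2\gamma>2\ge N/(N-2)$, so the second case of that theorem applies unconditionally. When $N=3$ the first case requires $q>4$, i.e.\ $\gamma>2$, which is precisely the additional assumption listed in the corollary. Theorem~\ref{Theorem 1.10} therefore produces a nontrivial critical point of $J$, which is a nontrivial weak solution of \eqref{1.17}.

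I do not anticipate any genuine obstacle: the proof is a bookkeeping exercise, matching the abstract hypotheses of Theorem~\ref{Theorem 1.10} to the concrete nonlinearity $f(x,t)=\mu|t|^{2\gamma-2}t$. The only subtlety is noticing that the minimum in the $\mu$-bound is exactly what is needed to simultaneously enforce $\mu_1<\lambda_1(\gamma)$ in \ref{A1} and $\mu_0<\lambda_1(\gamma)$ in \eqref{1.13}; beyond that, the two dimensional cases line up perfectly with the two cases in Theorem~\ref{Theorem 1.10} via $q=2\gamma$.
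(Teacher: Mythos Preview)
Your proposal is correct and follows exactly the paper's approach: the paper's ``proof'' is precisely the paragraph preceding the corollary, which verifies \ref{A1}, \eqref{1.13}, and \eqref{1.14} with the same choices $n=1$, $\gamma_1=\gamma_0=\gamma$, $\alpha_1=\beta$, $\mu_1=(1/2\gamma-1/2^\ast)\mu/\beta$, $\mu_0=\mu/(\gamma b_0)$, and $q=2\gamma$, and you have additionally spelled out the straightforward check of \eqref{1.2} and the dimensional case analysis that the paper leaves implicit.
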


In particular, we have the following corollary in the model case $h(t) = a + bt^{\gamma-1}$.

\begin{corollary}
The problem
\[
\left\{\begin{aligned}
- \left[a + b \left(\int_\Omega |\nabla u|^2\, dx\right)^{\gamma-1}\right] \Delta u & = \mu\, |u|^{2\gamma-2}\, u + |u|^{2^\ast-2}\, u && \text{in } \Omega\\[5pt]
u & = 0 && \text{on } \bdry{\Omega},
\end{aligned}\right.
\]
where $a \ge 0$ and $1 < \gamma < 2^\ast/2$, has a nontrivial solution if $0 < \mu < b \lambda_1(\gamma)$ and $N \ge 4$, or $N = 3$ and $\gamma > 2$.
\end{corollary}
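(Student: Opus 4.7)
The plan is to deduce this corollary directly from Corollary \ref{Theorem 1.12} applied to the model nonlocal term $h(t) = a + bt^{\gamma-1}$. The work is entirely bookkeeping: compute $H$ and $K$, identify the constants $\beta$, $b_0$ appearing in \eqref{3.18} and \eqref{1.18}, verify that $h$ satisfies \ref{A2} and \ref{A3}, and check that the hypothesis $0 < \mu < b\lambda_1(\gamma)$ matches the abstract range required in Corollary \ref{Theorem 1.12}.

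For $h(t)=a+bt^{\gamma-1}$ we have $H(t)=at+(b/\gamma)t^{\gamma}$ and
\[
K(t) = \tfrac{1}{N}\, a t + \Bigl(\tfrac{1}{2\gamma}-\tfrac{1}{2^\ast}\Bigr) b\, t^\gamma.
\]
Since $a\ge 0$, the first term is nonnegative, so \eqref{3.18} holds with $\beta := \bigl(\tfrac{1}{2\gamma}-\tfrac{1}{2^\ast}\bigr) b > 0$ (note $\beta>0$ because $\gamma<2^\ast/2$ and $b>0$). Similarly $H(t)\ge (b/\gamma)t^\gamma$ for all $t\ge 0$, so \eqref{1.18} holds with $b_0 := b/\gamma$ for any $\delta > 0$. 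Condition \ref{A2} reduces to the superadditivity of $t\mapsto at$ (trivial) and of $t\mapsto t^\gamma$ (immediate for $\gamma\ge 1$). For \ref{A3}, write
\[
\frac{h(t)}{t^{2^\ast\!/2-1}} = \frac{a}{t^{2^\ast\!/2-1}} + \frac{b}{t^{2^\ast\!/2-\gamma}},
\]
where both exponents are positive because $\gamma<2^\ast/2$; hence this function is strictly decreasing, tends to $+\infty$ as $t\to 0^+$ and to $0$ as $t\to+\infty$, so the constant $b$ of \ref{A3} equals $0<S^{-2^\ast\!/2}$.

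The final check is that the range of $\mu$ in Corollary \ref{Theorem 1.12} coincides with $0<\mu<b\lambda_1(\gamma)$. With the choices above,
\[
\gamma b_0 = b \qquad \text{and} \qquad \Bigl(\tfrac{1}{2\gamma}-\tfrac{1}{2^\ast}\Bigr)^{-1}\beta = b,
\]
so $\min\!\bigl\{\gamma b_0,\ (\tfrac{1}{2\gamma}-\tfrac{1}{2^\ast})^{-1}\beta\bigr\}\lambda_1(\gamma) = b\lambda_1(\gamma)$, matching the hypothesis exactly. The dimension restriction $N\ge 4$, or $N=3$ and $\gamma>2$, is imported unchanged from Corollary \ref{Theorem 1.12}. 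Applying that corollary then yields a nontrivial solution, completing the proof.

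There is essentially no obstacle here; the only mild subtlety is recognizing that the linear $at$ term in $K$ does not interfere with the $t^\gamma$ lower bound in \eqref{3.18} (because $at\ge 0$), so that $\beta$ can be chosen to come entirely from the $b$-term and thus produce the sharp constraint $\mu < b\lambda_1(\gamma)$ without loss from the $a$-term.
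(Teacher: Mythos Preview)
Your proof is correct and follows exactly the approach the paper intends: the corollary is stated immediately after Corollary \ref{Theorem 1.12} as its specialization to $h(t)=a+bt^{\gamma-1}$, and the paper gives no separate proof, so your verification of \eqref{3.18}, \eqref{1.18}, \ref{A2}, \ref{A3} and the identification $\gamma b_0 = \bigl(\tfrac{1}{2\gamma}-\tfrac{1}{2^\ast}\bigr)^{-1}\beta = b$ is precisely the bookkeeping the paper leaves implicit. The only cosmetic point is the notational clash between the problem parameter $b$ and the limit constant also called $b$ in \ref{A3}; you might rename the latter to avoid confusion.
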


Finally we consider the problem
\begin{equation} \label{1.19}
\left\{\begin{aligned}
- h\bigg(\int_\Omega |\nabla u|^2\, dx\bigg)\, \Delta u & = \nu\, |u|^{q-2}\, u + |u|^{2^\ast-2}\, u && \text{in } \Omega\\[5pt]
u & = 0 && \text{on } \bdry{\Omega},
\end{aligned}\right.
\end{equation}
where $\nu > 0$ and $2 < q < 2^\ast$. Assume that for some constants $\alpha, \beta > 0$ and $q/2 < \gamma < 2^\ast/2$,
\begin{equation} \label{3.21}
K(t) \ge \alpha t + \beta t^\gamma \quad \forall t \ge 0.
\end{equation}
Since $h$ is nonnegative, $H(t) \ge 2K(t) \ge 2 \alpha t$, so \eqref{1.12} holds with $b_0 = 2 \alpha$ and $\gamma_0 = 1$. We have $f(x,t) = \nu\, |t|^{q-2}\, t$ and
\[
F(x,t) = \frac{1}{q}\, \nu\, |t|^q, \qquad F(x,t) - \frac{1}{2^\ast}\, tf(x,t) = \left(\frac{1}{q} - \frac{1}{2^\ast}\right) \nu\, |t|^q.
\]
Since $q > 2$, \eqref{1.13} holds for any $\mu_0 > 0$ if $\delta > 0$ is sufficiently small. Theorem \ref{Theorem 1.10} has the following corollary for problem \eqref{1.19}.

\begin{corollary}
Assume \eqref{3.21}, \ref{A2}, and \ref{A3}. If
\[
0 < \nu < (2 \gamma - 2) \left(\frac{1}{q} - \frac{1}{2^\ast}\right)^{-1} \left(\frac{\alpha \lambda_1}{2 \gamma - q}\right)^{(2\gamma-q)/(2\gamma-2)} \left(\frac{\beta \lambda_1(\gamma)}{q - 2}\right)^{(q-2)/(2\gamma-2)}
\]
and $N \ge 4$, or $N = 3$ and $q > 4$, then problem \eqref{1.19} has a nontrivial solution.
\end{corollary}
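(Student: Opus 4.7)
The plan is to apply Theorem~\ref{Theorem 1.10} with $f(x,t) = \nu\, |t|^{q-2}\, t$; the two solution cases in the corollary correspond exactly to the two dimensional cases of that theorem. Condition \eqref{1.2} holds with $p = q$, and \ref{A2}, \ref{A3} are hypothesized. Since $h \ge 0$ and \eqref{3.21} gives $K(t) \ge \alpha t$, we have $H(t) \ge 2K(t) \ge 2\alpha t$, so \eqref{1.12} holds globally with $b_0 = 2\alpha$ and $\gamma_0 = 1$. Because $F(x,t) = (\nu/q)|t|^q$ with $q > 2$, \eqref{1.13} holds for any prescribed $\mu_0 \in (0,\lambda_1)$ by taking $\delta$ small. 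For \eqref{1.14}, any ball $B_r(x_0) \subset \Omega$ works with the same $q$ and $\nu$, and the admissibility condition $2\gamma_0 \le q \le 2\gamma_n$ reads $2 \le q \le 2\gamma$, which holds since $q > 2$ and $\gamma > q/2$.

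The only substantive check is \ref{A1} with indices $\gamma_1 = 1$, $\gamma_2 = \gamma$ and coefficients $\alpha_1 = \alpha$, $\alpha_2 = \beta$: the first inequality is \eqref{3.21}, and the second reduces to showing there exist $\mu_1 \le \lambda_1$, $\mu_2 \le \lambda_1(\gamma)$ (not both saturated) with
\[
\left(\frac{1}{q} - \frac{1}{2^\ast}\right) \nu\, |t|^q \le \mu_1 \alpha\, t^2 + \mu_2 \beta\, t^{2\gamma} \quad \forall t \in \R.
\]
Setting $s = t^2$ and minimizing the right-hand side over $s > 0$, or equivalently writing $|t|^q = (t^2)^{(2\gamma-q)/(2\gamma-2)} (t^{2\gamma})^{(q-2)/(2\gamma-2)}$ and applying Young's inequality with conjugate exponents $(2\gamma-2)/(2\gamma-q)$ and $(2\gamma-2)/(q-2)$ with a free balancing parameter, a direct computation gives
\[
\min_{s>0} \bigl[\mu_1 \alpha\, s^{1-q/2} + \mu_2 \beta\, s^{\gamma-q/2}\bigr] = (2\gamma - 2) \left(\frac{\mu_1 \alpha}{2\gamma - q}\right)^{(2\gamma-q)/(2\gamma-2)} \left(\frac{\mu_2 \beta}{q - 2}\right)^{(q-2)/(2\gamma-2)}.
\]
Plugging in $\mu_1 = \lambda_1$ and $\mu_2 = \lambda_1(\gamma)$ and rearranging shows the inequality is satisfiable precisely when $\nu$ is at most the bound stated in the corollary; the strict inequality on $\nu$ then supplies the slack needed to drop one of the $\mu_i$ below its upper bound, ensuring the ``at least one strict'' requirement in \ref{A1}.

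With all hypotheses verified, Theorem~\ref{Theorem 1.10} delivers a nontrivial solution in each of the two dimensional cases ($N \ge 4$, or $N = 3$ with $q > 4$). The main obstacle, if any, is the Young-type minimization producing the sharp constant in the corollary; once that calculation is dispatched, everything else is routine bookkeeping against the hypotheses of Theorem~\ref{Theorem 1.10}.
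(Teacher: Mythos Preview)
Your proposal is correct and follows essentially the same route as the paper: both reduce the corollary to verifying \ref{A1} with $\gamma_1 = 1$, $\gamma_2 = \gamma$, $\alpha_1 = \alpha$, $\alpha_2 = \beta$, and both establish the key inequality by computing the minimum of $\mu_1 \alpha t^2 + \mu_2 \beta |t|^{2\gamma} - (1/q - 1/2^\ast)\nu |t|^q$ (the paper states the resulting threshold directly, while you phrase it via Young's inequality, but the calculations are equivalent). Your explicit remark that the strict inequality on $\nu$ provides the slack to make one of the $\mu_i$ strictly below its bound is a point the paper leaves implicit.
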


\begin{proof}
To see that \ref{A1} holds, note that the minimum of
\[
\lambda \alpha t^2 + \mu \beta |t|^{2\gamma} - \left(\frac{1}{q} - \frac{1}{2^\ast}\right) \nu\, |t|^q, \quad t \in \R
\]
is nonnegative if and only if
\[
\nu \le (2 \gamma - 2) \left(\frac{1}{q} - \frac{1}{2^\ast}\right)^{-1} \left(\frac{\alpha \lambda}{2 \gamma - q}\right)^{(2\gamma-q)/(2\gamma-2)} \left(\frac{\beta \mu}{q - 2}\right)^{(q-2)/(2\gamma-2)}. \QED
\]
\end{proof}

In the case where $I$ is empty, first we consider the model problem
\begin{equation} \label{3.22}
\left\{\begin{aligned}
- \left[a + b \left(\int_\Omega |\nabla u|^2\, dx\right)^{\gamma-1}\right] \Delta u & = \lambda u + g(x,u) + |u|^{2^\ast-2}\, u && \text{in } \Omega\\[5pt]
u & = 0 && \text{on } \bdry{\Omega},
\end{aligned}\right.
\end{equation}
where $a, b \ge 0$ and $2^\ast/2 \le \gamma < + \infty$ satisfy one of the two conditions in Corollary \ref{Corollary 2.11}, $\lambda > 0$, and $g$ is a Carath\'{e}odory function on $\Omega \times \R$ satisfying
\begin{equation} \label{3.23}
g(x,t) = \o(t) \quad \text{as } t \to 0, \text{uniformly a.e.\! in } \Omega
\end{equation}
and
\begin{equation} \label{3.24}
|g(x,t)| \le c_8 |t|^{p-1} + c_9 \quad \text{for a.a.\! } x \in \Omega \text{ and all } t \in \R
\end{equation}
for some constants $c_8, c_9 > 0$ and $2 < p < 2^\ast$. The associated variational functional is
\begin{multline*}
J(u) = \frac{a}{2} \int_\Omega |\nabla u|^2\, dx + \frac{b}{2 \gamma} \left(\int_\Omega |\nabla u|^2\, dx\right)^\gamma - \frac{\lambda}{2} \int_\Omega u^2\, dx - \int_\Omega G(x,u)\, dx - \frac{1}{2^\ast} \int_\Omega |u|^{2^\ast} dx,\\[7.5pt]
u \in H^1_0(\Omega),
\end{multline*}
where $G(x,t) = \int_0^t g(x,s)\, ds$ is the primitive of $g$. We note that
\begin{equation} \label{3.30}
\int_\Omega G(x,u)\, dx = \o(\norm{u}^2) \quad \text{as } \norm{u} \to 0
\end{equation}
by \eqref{3.23} and \eqref{3.24}.

When $a = 0$, we have the following existence result.

\begin{theorem} \label{Theorem 3.10}
Assume that $g$ satisfies \eqref{3.23} and \eqref{3.24}. If $\gamma = 2^\ast/2$, $a = 0$, and $b > S^{-2^\ast\!/2}$, then problem \eqref{3.22} has a nontrivial solution for all $\lambda > 0$.
\end{theorem}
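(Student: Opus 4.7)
\medskip

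The plan is to prove Theorem \ref{Theorem 3.10} by the direct method, combining the unconditional compactness given by Corollary \ref{Corollary 2.11} \ref{Corollary 2.11.i} with a short test-function argument showing that the global infimum of $J$ is strictly negative.

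First I would observe that the hypotheses of the theorem place us squarely in case \ref{Corollary 2.11.i} of Corollary \ref{Corollary 2.11}: namely $\gamma = 2^\ast/2$ and $b > S^{-2^\ast\!/2}$, while the extra lower-order term $\int_\Omega G(x,u)\, dx$ is a compact perturbation that does not affect coercivity or the Palais--Smale condition, thanks to \eqref{3.24} and the subcritical growth $2 < p < 2^\ast$. Consequently $J$ is bounded from below, coercive, satisfies \PS{c} for every $c \in \R$, and admits a global minimizer $u \in H^1_0(\Omega)$, obtained for instance by applying Ekeland's variational principle to a minimizing sequence and then extracting a convergent subsequence via the \PS{c} condition at the level $c = \inf J$.

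The main (and really the only nontrivial) step will be to verify that $\inf_{H^1_0(\Omega)} J < 0$, since then $J(u) < 0 = J(0)$ forces $u \neq 0$. To this end I would take the first Dirichlet eigenfunction $\varphi_1 > 0$ of the Laplacian on $\Omega$ and expand $J(s\varphi_1)$ as $s \to 0^+$. Since $a = 0$ and $2\gamma = 2^\ast$, the nonlocal term contributes $\frac{b}{2^\ast}\, s^{2^\ast} \norm{\varphi_1}^{2^\ast}$, the critical term contributes $-\frac{1}{2^\ast}\, s^{2^\ast} \pnorm[2^\ast]{\varphi_1}^{2^\ast}$, the $\lambda$-term contributes $-\frac{\lambda}{2}\, s^2 \pnorm[2]{\varphi_1}^2$, and by \eqref{3.30} the $G$-term contributes $\o(s^2)$. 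Collecting these,
\[
J(s\varphi_1) = \frac{s^{2^\ast}}{2^\ast} \left(b \norm{\varphi_1}^{2^\ast} - \pnorm[2^\ast]{\varphi_1}^{2^\ast}\right) - \frac{\lambda s^2}{2}\, \pnorm[2]{\varphi_1}^2 + \o(s^2),
\]
and since $2 < 2^\ast$ the quadratic term $-\frac{\lambda}{2}\, s^2 \pnorm[2]{\varphi_1}^2$ dominates as $s \to 0^+$. Hence $J(s\varphi_1) < 0$ for all sufficiently small $s > 0$, which gives $\inf J < 0$ and completes the proof.

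I do not anticipate any serious obstacle here: the sign computation above is elementary, and all the hard analytic work (coercivity, compactness, existence of the minimizer) has already been done in Corollary \ref{Corollary 2.11}. The only point that requires a moment of care is confirming that the subcritical perturbation $G$ genuinely contributes $\o(s^2)$ rather than competing with the negative quadratic term; this is built into \eqref{3.23}--\eqref{3.24} and recorded in \eqref{3.30}, so no additional hypothesis on $g$ is needed.
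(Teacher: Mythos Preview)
Your proposal is correct and follows essentially the same route as the paper: invoke Corollary \ref{Corollary 2.11} \ref{Corollary 2.11.i} to obtain a global minimizer, then use the small-$s$ expansion (via \eqref{3.30}) to show $\inf J < 0$ and hence that the minimizer is nontrivial. The only cosmetic difference is that the paper carries out the expansion for an arbitrary $u \in H^1_0(\Omega) \setminus \{0\}$ rather than the specific test function $\varphi_1$.
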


\begin{proof}
By Corollary \ref{Corollary 2.11}, $J$ has a global minimizer $u_0$. For any $u \in H^1_0(\Omega) \setminus \set{0}$,
\[
J(su) = - \frac{\lambda s^2}{2} \int_\Omega u^2\, dx + \o(s^2) \quad \text{as } s \to 0
\]
by \eqref{3.30}, so $J(su) < 0$ if $s > 0$ is sufficiently small. So $J(u_0) = \inf_{H^1_0(\Omega)}\, J < 0$ and hence $u_0$ is nontrivial.
\end{proof}

When $a > 0$, we prove a multiplicity result. Let $0 < \lambda_1 < \lambda_2 \le \lambda_3 \le \cdots$ be the Dirichlet eigenvalues of $- \Delta$ on $\Omega$, repeated according to multiplicity.

\begin{theorem} \label{Theorem 3.11}
Assume that $g$ satisfies \eqref{3.23} and \eqref{3.24}.
\begin{enumroman}
\item \label{Theorem 3.11.i} If $\gamma = 2^\ast/2$, $a > 0$, and $b > S^{-2^\ast\!/2}$, then problem \eqref{3.22} has at least two nontrivial solutions in each of the following cases:
    \begin{enumerate}
    \item[$(a)$] $a \lambda_k < \lambda < a \lambda_{k+1}$ for some $k \ge 1$;
    \item[$(b)$] $a \lambda_k < \lambda = a \lambda_{k+1}$ for some $k \ge 1$ and $G(x,t) \le 0$ for a.a.\! $x \in \Omega$ and $|t| \le \delta$ for some $\delta > 0$.
    \end{enumerate}
\item \label{Theorem 3.11.ii} If $\gamma > 2^\ast/2$ and
    \[
    a^{\gamma-2^\ast\!/2}\, b^{2^\ast\!/2-1} > \frac{(\gamma - 2^\ast/2)^{\gamma-2^\ast\!/2}\, (2^\ast/2 - 1)^{2^\ast\!/2-1}}{(\gamma - 1)^{\gamma-1}}\, S^{-(2^\ast\!/2)(\gamma-1)},
    \]
    then problem \eqref{3.22} has at least two nontrivial solutions in each of the following cases:
    \begin{enumerate}
    \item[$(a)$] $a \lambda_k < \lambda < a \lambda_{k+1}$ for some $k \ge 1$;
    \item[$(b)$] $a \lambda_k = \lambda < a \lambda_{k+1}$ for some $k \ge 1$ and $G(x,t) \ge 0$ for a.a.\! $x \in \Omega$ and $|t| \le \delta$ for some $\delta > 0$.
    \end{enumerate}
\end{enumroman}
\end{theorem}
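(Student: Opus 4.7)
The plan is to combine Corollary~\ref{Corollary 2.11} with a local-linking argument at the origin. Under either set of hypotheses, Corollary~\ref{Corollary 2.11} gives that $J$ is coercive, bounded from below, satisfies \PS{c} for every $c\in\R$, and attains a global minimum at some $u_0\in H^1_0(\Omega)$. I would establish two things: (A) $\inf_{H^1_0(\Omega)}J<0$, so that $u_0$ is a first nontrivial solution; and (B) $J$ has a local linking at the origin with a nontrivial finite-dimensional negative subspace, so that by the local-linking theorem of Li--Willem (as refined by, e.g., Liu) there is a second nontrivial critical point $u_1$ at a strictly positive critical level, automatically distinct from $u_0$.

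For (A), pick a Dirichlet eigenfunction $\varphi$ of $-\Delta$ and test $J$ along $s\varphi$ for $s>0$ small. By \eqref{3.30} and the facts $2\gamma\ge 2^\ast>2$ in both (i) and (ii),
\[
J(s\varphi)=\frac{s^2}{2}\left(a\int_\Omega|\nabla\varphi|^2\,dx-\lambda\int_\Omega\varphi^2\,dx\right)+\o(s^2),
\]
so $J(s\varphi)<0$ for $s$ small as soon as $a\int_\Omega|\nabla\varphi|^2\,dx<\lambda\int_\Omega\varphi^2\,dx$. Taking $\varphi=\varphi_1$, this succeeds whenever $\lambda>a\lambda_1$, which covers (i)(a), (i)(b), (ii)(a), and the subcase $k\ge 2$ of (ii)(b). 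The remaining subcase of (ii)(b) has $k=1$ and $\lambda=a\lambda_1$; there the leading $s^2$-term vanishes along $\varphi_1$, but the hypothesis $G\ge 0$ near $0$ makes $-\int_\Omega G(x,s\varphi_1)\,dx\le 0$, and since $2\gamma>2^\ast$ the negative critical contribution dominates the positive Kirchhoff contribution as $s\to 0^+$, yielding $J(s\varphi_1)<0$.

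For (B), I would decompose $H^1_0(\Omega)=V\oplus W$ with $V=\operatorname{span}\set{\varphi_1,\ldots,\varphi_k}$ and $W=V^\perp$, and study the quadratic form $Q(u)=a\norm{u}^2-\lambda\int_\Omega u^2\,dx$. In the non-resonant subcases (a), $Q$ is strictly negative of order $\norm{u}^2$ on $V$ and strictly positive of order $\norm{u}^2$ on $W$, while the remaining pieces of $J$ are of higher order by \eqref{3.30} and $\min(2\gamma,2^\ast)>2$, so local linking is immediate. In the resonant case (i)(b), where $\lambda=a\lambda_{k+1}$ forces $Q\equiv 0$ on the $\lambda_{k+1}$-eigenspace lying in $W$, I would restore strict positivity on $W$ using $2\gamma=2^\ast$, the Sobolev inequality \eqref{1.7}, and $b>S^{-2^\ast\!/2}$ through
\[
\frac{b}{2\gamma}\norm{u}^{2\gamma}-\frac{1}{2^\ast}\int_\Omega|u|^{2^\ast}\,dx\ge\frac{1}{2^\ast}\bigl(b-S^{-2^\ast\!/2}\bigr)\norm{u}^{2^\ast}\ge 0,
\]
combined with $Q\ge 0$ on $W$ and $-\int_\Omega G\ge 0$ from $G\le 0$ near $0$. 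In the resonant case (ii)(b), where $\lambda=a\lambda_k$ forces $Q\equiv 0$ on the $\lambda_k$-eigenspace lying in the finite-dimensional $V$, I would use that on $V$ the norms $\norm{\cdot}$ and $\pnorm[2^\ast]{\cdot}$ are equivalent and $2\gamma>2^\ast$, so the negative critical term dominates the positive Kirchhoff term; together with $Q\le 0$ on $V$ and $-\int_\Omega G\le 0$ from $G\ge 0$ near $0$, this gives $J<0$ on $V\cap B_\rho\setminus\set{0}$.

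The main obstacle is the two resonant subcases (b): in each, $Q$ is only semidefinite on the relevant subspace, and a strict one-sided bound must be extracted by carefully matching the critical Sobolev exponent to either the Kirchhoff power $2\gamma$ (case (i)(b), where $2\gamma=2^\ast$) or to the sign hypothesis on $G$ (case (ii)(b), where $2\gamma>2^\ast$ places Kirchhoff in higher order). Once local linking is in hand, the local-linking theorem applied to the bounded-below $C^1$-functional $J$ satisfying \PS{c} for every $c\in\R$ produces a critical point $u_1$ at a level $c\ge\inf_{W\cap\partial B_\rho}J>0$, so that $J(u_0)<0<J(u_1)$ and $u_0,u_1$ are two distinct nontrivial solutions of \eqref{3.22}.
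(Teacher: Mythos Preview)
Your strategy is essentially the paper's: invoke Corollary~\ref{Corollary 2.11} for compactness and boundedness from below, split $H^1_0(\Omega)=V\oplus W$ along the first $k$ eigenspaces, verify the local-linking geometry (with the resonant subcases handled via the sign condition on $G$ and the comparison of $2\gamma$ with $2^\ast$), and conclude two nontrivial critical points. The paper packages the abstract step as Proposition~\ref{Proposition 3.12} (Brezis--Nirenberg \cite{MR1127041}, Theorem~4), which directly yields two nontrivial critical points from $\inf J<0$ together with the local linking, rather than separating out the minimizer and a second point at positive level as you do; the content is the same.

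One technical slip: in case (i)(b) you cannot assert ``$-\int_\Omega G(x,w)\,dx\ge 0$'' from $G\le 0$ near $0$, since small $\norm{w}$ does not force $|w|\le\delta$ pointwise. The quadratic part must absorb the contribution of $G$ on $\set{|w|>\delta}$, i.e.\ one shows $\int_\Omega\bigl[\tfrac{a}{2}(|\nabla w|^2-\lambda_{k+1}w^2)-G(x,w)\bigr]\,dx\ge 0$ for $\norm{w}$ small, which is exactly the Li--Willem lemma you cite and exactly how the paper argues; the remaining term $\tfrac{1}{2^\ast}(b-S^{-2^\ast\!/2})\norm{w}^{2^\ast}$ then gives the needed positivity.
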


We will prove this theorem using the following result of Brezis and Nirenberg \cite[Theorem 4]{MR1127041}.

\begin{proposition} \label{Proposition 3.12}
Let $J$ be a $C^1$-functional on a Banach space $X$. Assume that $J$ is bounded from below, $\inf_X\, J < 0$, and $J$ satisfies the {\em \PS{c}} condition for all $c \in \R$. Assume further that $X$ has a direct sum decomposition $X = V \oplus W,\, u = v + w$ with $\dim V < + \infty$ and
\[
\begin{cases}
J(v) \le 0 & \text{for } v \in V \cap B_r(0)\\[10pt]
J(w) \ge 0 & \text{for } w \in W \cap B_r(0)
\end{cases}
\]
for some $r > 0$. Then $J$ has at least two nontrivial critical points.
\end{proposition}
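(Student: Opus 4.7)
The plan is to apply Proposition~\ref{Proposition 3.12} to $J$ on $X = H^1_0(\Omega)$. Under either set of hypotheses in (i) or (ii), Corollary~\ref{Corollary 2.11} provides three of the abstract hypotheses for free: $J$ is bounded from below, coercive, and satisfies the {\em \PS{c}} condition for every $c \in \R$. So two ingredients remain: showing $\inf_X J < 0$, and constructing a direct-sum decomposition $X = V \oplus W$ with $\dim V < \infty$ and the prescribed small-ball sign conditions.

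For $\inf_X J < 0$, I would evaluate $J$ on $s\phi_1$, where $\phi_1 > 0$ is a first Dirichlet eigenfunction and $s > 0$ is small. Using $\|\phi_1\|^2 = \lambda_1 \int \phi_1^2$ and the estimate $\int G(x,u)\,dx = o(\|u\|^2)$ of \eqref{3.30},
\[
J(s\phi_1) = \tfrac{1}{2}(a\lambda_1 - \lambda)\, s^2 \int \phi_1^2\, dx + O(s^{2\gamma}) - \tfrac{s^{2^\ast}}{2^\ast} \int \phi_1^{2^\ast} dx + o(s^2) \quad \text{as } s \to 0^+.
\]
The hypotheses force $\lambda > a\lambda_1$ strictly (because $\lambda_1$ is simple and $k \ge 1$) in every case except (ii)(b) with $k = 1$; in that exceptional case $\lambda = a\lambda_1$ makes the $s^2$ coefficient vanish, $2\gamma > 2^\ast$ makes the $O(s^{2\gamma})$ term negligible relative to $-s^{2^\ast}/2^\ast$, and the sign assumption $G \ge 0$ near $0$ combined with $|s\phi_1| \le \delta$ for small $s$ gives $-\int G(x, s\phi_1) \le 0$. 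So $J(s\phi_1) < 0$ for small $s > 0$ in every case.

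For the decomposition, I would take $V = \operatorname{span}\{\phi_1, \dots, \phi_k\}$ and $W = V^\perp$ in $H^1_0(\Omega)$. In the nonresonant case (a), the strict separation $a\lambda_k < \lambda < a\lambda_{k+1}$ yields $\tfrac{a}{2}\|v\|^2 - \tfrac{\lambda}{2}\int v^2 \le -c\|v\|^2$ on $V$ and $\ge c\|w\|^2$ on $W$ for some $c > 0$; combined with $\int G = o(\|u\|^2)$, $\|u\|^{2\gamma} = o(\|u\|^2)$, and $\int |u|^{2^\ast}\,dx = o(\|u\|^2)$ as $\|u\| \to 0$ (the last two because $2\gamma, 2^\ast > 2$), the higher-order terms cannot flip the sign in a small ball. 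In case (ii)(b), $\lambda = a\lambda_k$ is resonant from below: the quadratic form vanishes on the $\lambda_k$-eigenspace inside $V$, but finite-dimensional norm equivalence forces $|v| \le \delta$ pointwise for small $\|v\|$, so $-\int G(x,v)\,dx \le 0$ by $G \ge 0$ near $0$, and since $2\gamma > 2^\ast$ the positive term $\tfrac{b}{2\gamma}\|v\|^{2\gamma}$ is dominated on finite-dimensional $V$ by the negative critical term $-\tfrac{1}{2^\ast}\int |v|^{2^\ast}\,dx$, yielding $J(v) \le 0$; the $W$-side argument of (a) still applies since $\lambda < a\lambda_{k+1}$.

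The hard case will be (i)(b), where $\lambda = a\lambda_{k+1}$ is resonant from above and simultaneously $2\gamma = 2^\ast$. The $V$-side estimate of (a) still works since $a\lambda_k < \lambda$. On $W$ I would split $w = w_1 + w_2$ with $w_1 \in \ker(-\Delta - \lambda_{k+1})$ (finite-dimensional) and $w_2$ in its orthogonal complement inside $W$; the spectral gap above $\lambda_{k+1}$ then gives $\tfrac{a}{2}\|w\|^2 - \tfrac{\lambda}{2}\int w^2 \ge c\|w_2\|^2$, while $b > S^{-2^\ast\!/2}$ gives $\tfrac{b}{2^\ast}\|w\|^{2^\ast} - \tfrac{1}{2^\ast}\int |w|^{2^\ast}\,dx \ge \tfrac{b - S^{-2^\ast\!/2}}{2^\ast}\|w\|^{2^\ast} \ge 0$. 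The delicate step is controlling $-\int G(x,w)\,dx$: the hypothesis $G \le 0$ on $\{|w| \le \delta\}$ reduces the issue to bounding $\int_{|w|>\delta}|G|\,dx$, and finite-dimensional norm equivalence on $\ker(-\Delta - \lambda_{k+1})$ forces $|w_1| \le \delta/2$ pointwise once $\|w\|$ is small enough, so $\{|w|>\delta\} \subset \{|w_2|>\delta/2\}$; a H\"older--Chebyshev bound then controls $\int_{|w|>\delta}|G|\,dx$ by a constant times $\|w_2\|^\alpha \|w\|^\beta$ for exponents making this absorbable by the $c\|w_2\|^2$ surplus together with the positive $\|w\|^{2^\ast}$ contribution. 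With both sign conditions verified, Proposition~\ref{Proposition 3.12} delivers two nontrivial critical points in each case.
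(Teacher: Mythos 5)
Your proposal does not prove the statement it was assigned. Proposition \ref{Proposition 3.12} is an \emph{abstract} critical point theorem about an arbitrary $C^1$-functional $J$ on an arbitrary Banach space $X$: given boundedness from below, $\inf_X J < 0$, the \PS{c} condition for all $c$, and the local sign conditions on $V \cap B_r(0)$ and $W \cap B_r(0)$ with $\dim V < +\infty$, one must \emph{produce} two nontrivial critical points. What you have written instead is a plan for verifying the hypotheses of this proposition for the concrete Kirchhoff functional of problem \eqref{3.22} — choosing $V$ as the span of the first $k$ Dirichlet eigenfunctions, estimating $J(s\phi_1)$, handling the resonant cases, etc. That is the content of the paper's proof of Theorem \ref{Theorem 3.11} (and it does roughly parallel it, including the use of \eqref{3.30} and, in case (i)(b), of $b > S^{-2^\ast\!/2}$ together with the local sign condition on $G$, which the paper handles by citing Li and Willem rather than by your H\"older--Chebyshev absorption). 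But as a proof of Proposition \ref{Proposition 3.12} it is circular: you invoke the proposition in your first sentence and never argue why the stated hypotheses force the existence of two nontrivial critical points.

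For the record, the paper itself does not prove Proposition \ref{Proposition 3.12}; it quotes it as Theorem 4 of Brezis and Nirenberg \cite{MR1127041}. A genuine proof would go along those lines: the sign conditions give $J(0) = 0$, so a global minimizer $u_0$ (which exists since $J$ is bounded below and satisfies \PS{c}) has $J(u_0) = \inf_X J < 0$ and is therefore a first nontrivial critical point; the second one requires a separate minimax/deformation (or critical groups/local linking) argument exploiting $\dim V < +\infty$ and the two-sided sign structure near the origin, to rule out that $0$ and $u_0$ are the only critical points. None of this appears in your proposal, so the central idea of the statement is missing.
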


\begin{proof}[Proof of Theorem \ref{Theorem 3.11}]
By Corollary \ref{Corollary 2.11}, $J$ is bounded from below and satisfies the \PS{c} condition for all $c \in \R$. We have the direct sum decomposition $H^1_0(\Omega) = V \oplus W,\, u = v + w$, where $V$ is the span of the eigenfunctions associated with $\lambda_1, \dots, \lambda_k$ and $W$ is the orthogonal complement of $V$. For $v \in V$,
\begin{multline} \label{3.26}
J(v) \le - \half \left(\frac{\lambda}{\lambda_k} - a\right) \int_\Omega |\nabla v|^2\, dx + \frac{b}{2 \gamma} \left(\int_\Omega |\nabla v|^2\, dx\right)^\gamma - \int_\Omega G(x,v)\, dx - \frac{1}{2^\ast} \int_\Omega |v|^{2^\ast} dx\\[10pt]
= - \half \left(\frac{\lambda}{\lambda_k} - a\right) \int_\Omega |\nabla v|^2\, dx + \o(\norm{v}^2) \quad \text{as } \norm{v} \to 0
\end{multline}
by \eqref{3.30}, so $J(v) < 0$ if $\lambda > a \lambda_k$ and $\norm{v} > 0$ is sufficiently small. For $w \in W$,
\begin{multline*}
\hspace{-2.1pt} J(w) \ge \half \left(a - \frac{\lambda}{\lambda_{k+1}}\right) \int_\Omega |\nabla w|^2\, dx + \frac{b}{2 \gamma} \left(\int_\Omega |\nabla w|^2\, dx\right)^\gamma - \int_\Omega G(x,w)\, dx - \frac{1}{2^\ast} \int_\Omega |w|^{2^\ast} dx\\[10pt]
= \half \left(a - \frac{\lambda}{\lambda_{k+1}}\right) \int_\Omega |\nabla w|^2\, dx + \o(\norm{w}^2) \quad \text{as } \norm{w} \to 0
\end{multline*}
by \eqref{3.30}, so $J(w) \ge 0$ if $\lambda < a \lambda_{k+1}$ and $\norm{w}$ is sufficiently small. So $J$ has at least two nontrivial critical points by Proposition \ref{Proposition 3.12} in the cases \ref{Theorem 3.11.i}$(a)$ and \ref{Theorem 3.11.ii}$(a)$.

In the case \ref{Theorem 3.11.i}$(b)$, \eqref{1.7} gives
\[
J(w) \ge \int_\Omega \left[\frac{a}{2}\, \Big(|\nabla w|^2 - \lambda_{k+1}\, w^2\Big) - G(x,w)\right] dx + \frac{b - S^{-2^\ast\!/2}}{2^\ast} \left(\int_\Omega |\nabla w|^2\, dx\right)^{2^\ast\!/2} \quad \forall w \in W.
\]
The local sign condition on $G$ in this case implies that the first integral on the right-hand side is nonnegative if $\norm{w}$ is sufficiently small (see Li and Willem \cite{MR1312028}). Since $b > S^{-2^\ast\!/2}$, then $J(w) \ge 0$ when $\norm{w}$ is small. In the case \ref{Theorem 3.11.ii}$(b)$, \eqref{3.26} gives
\[
J(v) \le \frac{b}{2 \gamma} \left(\int_\Omega |\nabla v|^2\, dx\right)^\gamma - \int_{\set{|v| > \delta}} G(x,v)\, dx - \frac{1}{2^\ast} \int_\Omega |v|^{2^\ast} dx \quad \forall v \in V.
\]
Since $V$ is a finite dimensional subspace of $H^1_0(\Omega)$ consisting of $L^\infty$-functions and $\gamma > 2^\ast/2$, it follows from this that $J(v) < 0$ if $\norm{v} > 0$ is sufficiently small. So $J$ has two nontrivial critical points in these cases also.
\end{proof}

In the borderline case where $\gamma = 2^\ast/2$ and $b = S^{-2^\ast\!/2}$, lower-order terms come into play. We consider the problem
\begin{equation} \label{3.28}
\left\{\begin{aligned}
- h\bigg(\int_\Omega |\nabla u|^2\, dx\bigg)\, \Delta u & = \lambda u + g(x,u) + |u|^{2^\ast-2}\, u && \text{in } \Omega\\[5pt]
u & = 0 && \text{on } \bdry{\Omega},
\end{aligned}\right.
\end{equation}
where
\[
h(t) = a + S^{-2^\ast\!/2}\, t^{2^\ast\!/2-1} + \eta\, t^{\sigma-1}, \quad t \ge 0,
\]
$a \ge 0$, $\eta > 0$, $p/2 < \sigma < 2^\ast/2$, $\lambda > 0$, and $g$ is a Carath\'{e}odory function on $\Omega \times \R$ satisfying \eqref{3.23} and \eqref{3.24}. The associated functional is
\begin{multline*}
J(u) = \frac{a}{2} \int_\Omega |\nabla u|^2\, dx + \frac{S^{-2^\ast\!/2}}{2^\ast} \left(\int_\Omega |\nabla u|^2\, dx\right)^{2^\ast\!/2} + \frac{\eta}{2 \sigma} \left(\int_\Omega |\nabla u|^2\, dx\right)^\sigma\\[10pt]
- \frac{\lambda}{2} \int_\Omega u^2\, dx - \int_\Omega G(x,u)\, dx - \frac{1}{2^\ast} \int_\Omega |u|^{2^\ast} dx, \quad u \in H^1_0(\Omega),
\end{multline*}
where $G(x,t) = \int_0^t g(x,s)\, ds$ satisfies \eqref{3.30} as before. We have the following existence and multiplicity result.

\begin{theorem} \label{Theorem 3.13}
Let $\eta > 0$ and $p/2 < \sigma < 2^\ast/2$, and assume that $g$ satisfies \eqref{3.23} and \eqref{3.24}.
\begin{enumroman}
\item \label{Theorem 3.13.i} If $a = 0$, then problem \eqref{3.28} has a nontrivial solution for all $\lambda > 0$.
\item \label{Theorem 3.13.ii} If $a > 0$, then problem \eqref{3.28} has at least two nontrivial solutions in each of the following cases:
    \begin{enumerate}
    \item[$(a)$] $a \lambda_k < \lambda < a \lambda_{k+1}$ for some $k \ge 1$;
    \item[$(b)$] $a \lambda_k < \lambda = a \lambda_{k+1}$ for some $k \ge 1$ and $G(x,t) \le 0$ for a.a.\! $x \in \Omega$ and $|t| \le \delta$ for some $\delta > 0$.
    \end{enumerate}
\end{enumroman}
\end{theorem}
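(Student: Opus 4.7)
The plan is to observe that $h(t) = a + S^{-2^\ast\!/2}\, t^{2^\ast\!/2-1} + \eta\, t^{\sigma-1}$ satisfies condition \ref{A4}$(i)$ with the same $\eta$ and with $\gamma = \sigma$, since $a \ge 0$ and the hypothesis $p/2 < \sigma < 2^\ast\!/2$ is exactly the range required. The nonlinearity $\lambda u + g(x,u)$ satisfies \eqref{1.2}, because the linear term is absorbed into a bound of the form $c_1 |t|^{p-1} + c_2$ (using $p > 2$). Consequently Theorem \ref{Theorem 2.4} applies: $J$ is bounded from below, coercive, admits a global minimizer, and satisfies the \PS{c} condition for every $c \in \R$. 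With this compactness in hand, the two parts follow the schemes of Theorems \ref{Theorem 3.10} and \ref{Theorem 3.11} respectively.

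For part \ref{Theorem 3.13.i}, let $u_0$ be the global minimizer. For any $u \in H^1_0(\Omega) \setminus \set{0}$, since $a = 0$,
\[
J(su) = - \frac{\lambda s^2}{2} \int_\Omega u^2\, dx + \o(s^2) \quad \text{as } s \to 0^+,
\]
because the $\sigma$-term is $\O(s^{2\sigma})$ with $2\sigma > p > 2$, the critical term is $\O(s^{2^\ast})$, and $\int_\Omega G(x,su)\, dx = \o(s^2)$ by \eqref{3.30}. Hence $\inf_{H^1_0(\Omega)}\, J < 0$ and $u_0$ is nontrivial.

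For part \ref{Theorem 3.13.ii}, I apply Proposition \ref{Proposition 3.12} with $H^1_0(\Omega) = V \oplus W$, where $V$ is the span of the eigenfunctions of $-\Delta$ for $\lambda_1, \dots, \lambda_k$. On $V$, the argument is identical in subcases $(a)$ and $(b)$: using $\int_\Omega |\nabla v|^2\, dx \le \lambda_k \int_\Omega v^2\, dx$, \eqref{3.30}, and the fact that the $\sigma$- and critical-power terms are $\o(\norm{v}^2)$ since $2\sigma, 2^\ast > 2$,
\[
J(v) \le - \half \left(\frac{\lambda}{\lambda_k} - a\right) \norm{v}^2 + \o(\norm{v}^2),
\]
which is negative for small $\norm{v} > 0$ as $\lambda > a \lambda_k$. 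On $W$, the Sobolev inequality $\int_\Omega |w|^{2^\ast} dx \le S^{-2^\ast\!/2}\, \norm{w}^{2^\ast}$ cancels the critical part of $J$ against the $S^{-2^\ast\!/2}$-part of the nonlocal term, and combining this with $\norm{w}^2 \ge \lambda_{k+1} \int_\Omega w^2\, dx$ gives
\[
J(w) \ge \half \left(a - \frac{\lambda}{\lambda_{k+1}}\right) \norm{w}^2 + \frac{\eta}{2\sigma}\, \norm{w}^{2\sigma} - \int_\Omega G(x,w)\, dx.
\]
In subcase $(a)$, $\lambda < a\lambda_{k+1}$, and the first term dominates the $\o(\norm{w}^2)$ error from \eqref{3.30}, yielding $J(w) \ge 0$ for small $\norm{w}$.

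The main (and only genuinely new) difficulty is the borderline subcase $(b)$ of \ref{Theorem 3.13.ii}, where $\lambda = a\lambda_{k+1}$ and the quadratic term on $W$ vanishes. Here I split $\int_\Omega G(x,w)\, dx$ over $\set{|w| \le \delta}$, where the hypothesis $G \le 0$ forces the piece to be non-positive, and over $\set{|w| > \delta}$, where \eqref{3.24} together with the elementary bounds $|w|, |w|^p \le C_\delta |w|^{2^\ast}$ on this set (a standard Li-Willem device, see \cite{MR1312028}) gives $\int_\Omega G(x,w)\, dx \ge -C \norm{w}^{2^\ast}$. Hence
\[
J(w) \ge \frac{\eta}{2\sigma}\, \norm{w}^{2\sigma} - C \norm{w}^{2^\ast},
\]
which is nonnegative for small $\norm{w}$ since $2\sigma < 2^\ast$. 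This is precisely where the lower-order term $\eta\, t^{\sigma-1}$ in $h$ earns its keep, providing a positive contribution of order strictly between $2$ and $2^\ast$ that is enough to beat the $G$-error. Proposition \ref{Proposition 3.12} then delivers two nontrivial critical points in both subcases.
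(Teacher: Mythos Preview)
Your proof is correct and follows the paper's overall strategy: verify \ref{A4}$(i)$, invoke Theorem~\ref{Theorem 2.4} for compactness and the existence of a minimizer, then handle part~\ref{Theorem 3.13.i} by showing the infimum is negative and part~\ref{Theorem 3.13.ii} via Proposition~\ref{Proposition 3.12} with the eigenspace splitting $V \oplus W$. The treatment of part~\ref{Theorem 3.13.i} and of subcase~$(a)$ is essentially identical to the paper's.

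The one genuine difference is in subcase~\ref{Theorem 3.13.ii}$(b)$. The paper drops the $\eta$-term entirely and bounds
\[
J(w) \ge \int_\Omega \left[\frac{a}{2}\,\Big(|\nabla w|^2 - \lambda_{k+1}\, w^2\Big) - G(x,w)\right] dx,
\]
then appeals directly to the Li--Willem local-linking result \cite{MR1312028} to conclude that this integral is nonnegative for small $w \in W$. You instead discard the (merely nonnegative) $a$-quadratic form, keep the $\eta \norm{w}^{2\sigma}$ term, and bound $\int_\Omega G(x,w)\,dx$ by $C\norm{w}^{2^\ast}$ via an explicit splitting over $\{|w| \le \delta\}$ and $\{|w| > \delta\}$. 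Your route is more self-contained and makes transparent the role of the lower-order $\eta$-term you advertise; the paper's route shows that for the local estimate $J(w) \ge 0$ the $\eta$-term is in fact dispensable (it is needed only for coercivity and the \PS{} condition via Theorem~\ref{Theorem 2.4}). Both arguments are valid.
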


\begin{proof}
\ref{Theorem 3.13.i} By Theorem \ref{Theorem 2.4}, $J$ has a global minimizer $u_0$. For any $u \in H^1_0(\Omega) \setminus \set{0}$,
\[
J(su) = - \frac{\lambda s^2}{2} \int_\Omega u^2\, dx + \o(s^2) \quad \text{as } s \to 0
\]
by \eqref{3.30}, so $J(su) < 0$ if $s > 0$ is sufficiently small. So $J(u_0) = \inf_{H^1_0(\Omega)}\, J < 0$ and hence $u_0$ is nontrivial.

\ref{Theorem 3.13.ii} By Theorem \ref{Theorem 2.4}, $J$ is bounded from below and satisfies the \PS{c} condition for all $c \in \R$. We have the direct sum decomposition $H^1_0(\Omega) = V \oplus W,\, u = v + w$, where $V$ is the span of the eigenfunctions associated with $\lambda_1, \dots, \lambda_k$ and $W$ is the orthogonal complement of $V$. For $v \in V$,
\begin{multline*}
J(v) \le - \half \left(\frac{\lambda}{\lambda_k} - a\right) \int_\Omega |\nabla v|^2\, dx + \frac{S^{-2^\ast\!/2}}{2^\ast} \left(\int_\Omega |\nabla v|^2\, dx\right)^{2^\ast\!/2} + \frac{\eta}{2 \sigma} \left(\int_\Omega |\nabla v|^2\, dx\right)^\sigma\\[10pt]
- \int_\Omega G(x,v)\, dx - \frac{1}{2^\ast} \int_\Omega |v|^{2^\ast} dx = - \half \left(\frac{\lambda}{\lambda_k} - a\right) \int_\Omega |\nabla v|^2\, dx + \o(\norm{v}^2) \quad \text{as } \norm{v} \to 0
\end{multline*}
by \eqref{3.30}, so $J(v) < 0$ if $\lambda > a \lambda_k$ and $\norm{v} > 0$ is sufficiently small. For $w \in W$,
\begin{multline*}
J(w) \ge \half \left(a - \frac{\lambda}{\lambda_{k+1}}\right) \int_\Omega |\nabla w|^2\, dx + \frac{S^{-2^\ast\!/2}}{2^\ast} \left(\int_\Omega |\nabla w|^2\, dx\right)^{2^\ast\!/2} + \frac{\eta}{2 \sigma} \left(\int_\Omega |\nabla w|^2\, dx\right)^\sigma\\[10pt]
\hspace{-0.56pt} - \int_\Omega G(x,w)\, dx - \frac{1}{2^\ast} \int_\Omega |w|^{2^\ast} dx = \half \left(a - \frac{\lambda}{\lambda_{k+1}}\right) \int_\Omega |\nabla w|^2\, dx + \o(\norm{w}^2) \quad \text{as } \norm{w} \to 0
\end{multline*}
by \eqref{3.30}, so $J(w) \ge 0$ if $\lambda < a \lambda_{k+1}$ and $\norm{w}$ is sufficiently small. So $J$ has at least two nontrivial critical points by Proposition \ref{Proposition 3.12} in the case $(a)$. In the case $(b)$, \eqref{1.7} gives
\[
J(w) \ge \int_\Omega \left[\frac{a}{2}\, \Big(|\nabla w|^2 - \lambda_{k+1}\, w^2\Big) - G(x,w)\right] dx \quad \forall w \in W.
\]
The local sign condition on $G$ implies that the right-hand side is nonnegative when $\norm{w}$ is small (see Li and Willem \cite{MR1312028}). So $J$ has two nontrivial critical points in this case also.
\end{proof}

\def\cdprime{$''$}

\end{document}